\def\int{\displaystyle\!int}
\def\lim{\displaystyle\!lim}
\def\sum{\displaystyle\!sum}
\def\sup{\displaystyle\!sup}
\def\inf{\displaystyle\!inf}
\def\cap{\displaystyle\!cap}
\def\max{\displaystyle\!max}
\def\min{\displaystyle\!min}
\let\oldsection\section
\renewcommand\section{\setcounter{equation}{0}\oldsection}
\newtheorem{theorem}{Theorem}[section]
\newtheorem{lemma}[theorem]{Lemma}
\newtheorem{corollary}[theorem]{Corollary}
\newtheorem{proposition}[theorem]{Proposition}
\theoremstyle{definition}
\theoremstyle{remark}
\newtheorem{remark}[theorem]{Remark}
\begin{document}

\title{Prandtl boundary layer expansion with strong boundary layers for inhomogeneous incompressible magnetohydrodynamics equations in Sobolev framework}

\author{Shengxin Li\footnote{Department of Applied Mathematics, The Hong Kong Polytechnic University, Hung Hom, Hong Kong; School of Mathematical Sciences, and CMA-Shanghai, Shanghai Jiao Tong University, Shanghai 200240, P. R. China. Email: lishengxin@sjtu.edu.cn}
\qquad\quad
Feng Xie
\footnote{School of Mathematical Sciences, and CMA-Shanghai, Shanghai Jiao Tong University, Shanghai 200240, P. R. China.
Email: tzxief@sjtu.edu.cn}
%\qquad\quad
%Tong Yang
%\footnote{Department of applied Mathematics, The Hong Kong Polytechnic University, Hong Kong.
%Email: t.yang@polyu.edu.hk}
}
\date{}
\maketitle

\begin{abstract}
We consider the validity of Prandtl boundary layer expansion of solutions to the initial boundary value problem for inhomogeneous incompressible magnetohydrodynamics equations in the half plane when both viscosity and resistivity coefficients tend to zero, where the no-slip boundary condition is imposed on velocity while the perfectly conducting condition is given on magnetic field. Since there exist strong boundary layers, the essential difficulty in establishing the uniform $L^\infty$ estimates of the error functions comes from the unboundedness of vorticity of strong boundary layers. Under the assumptions that the viscosity and resistivity coefficients take the same order of a small parameter and the initial tangential component of magnetic field has a positive lower bound near the boundary, we prove the validity of Prandtl boundary layer ansatz in $L^\infty$ sense in Sobolev framework. Compared with the homogeneous incompressible case considered in \cite{LXY192}, there exists a strong boundary layer of density. Consequently, some suitable functionals should be designed and the elaborated co-normal energy estimates will be involved in analysis due to the variation of density and the interaction between the density and velocity.
\end{abstract}

%\textit{MSC}:
%35L05, 35L70, 35L72
%
%
%\textit{Key word}:
%\ Global well-posedness; Two dimensional; Nonlinear wave equations;  Without Compact Support; Null condition
\maketitle

%%      ---------------------------------------------------------------------
%%      ------------------- TABLE OF CONTENTS (OPTIONAL) --------------------
%%      ---------------------------------------------------------------------

% \tableofcontents

%%      ---------------------------------------------------------------------
%%      ---------------------------- BODY OF PAPER --------------------------
%%      ---------------------------------------------------------------------

%%      Please input or insert the body of your paper here.

\section{Introduction and main results}
In this paper we are concerned with the following inhomogeneous incompressible magnetohydrodynamics (MHD) equations in a domain of $\Omega=\{(x, y): x\in\mathbb{T}, y\in\mathbb{R}_+\}$.
\begin{align}\label{1.1}
\begin{cases}
\partial_t\rho^\varepsilon+{\bf{u}}^\varepsilon\cdot\nabla\rho^\varepsilon=0,\\
\rho^\varepsilon\left(\partial_t{\bf{u}}^\varepsilon+{\bf{u}}^\varepsilon\cdot\nabla{\bf{u}}^\varepsilon\right)+\nabla p^\varepsilon-{\bf{H}}^\varepsilon\cdot\nabla {\bf{H}}^\varepsilon-\mu\varepsilon\Delta {\bf{u}}^\varepsilon=0,\\
\partial_t{\bf{H}}^\varepsilon+{\bf{u}}^\varepsilon\cdot\nabla {\bf{H}}^\varepsilon-{\bf{H}}^\varepsilon\cdot\nabla {\bf{u}}^\varepsilon-\kappa\varepsilon\Delta{\bf{H}}^\varepsilon=0,\\
\nabla\cdot{\bf{u}}^\varepsilon=0,\qquad \nabla\cdot{\bf{H}}^\varepsilon=0.
\end{cases}
\end{align}
Here, ${\bf{u}}^\varepsilon=(u^\varepsilon, v^\varepsilon)$ and ${\bf{H}}^\varepsilon=(h^\varepsilon, g^\varepsilon)$ stand for velocity vector and magnetic field respectively, $\rho^\varepsilon$ denotes density while $p^\varepsilon$ is pressure. $\mu\varepsilon$ and $\kappa\varepsilon$ are the viscosity and resistivity coefficients with $\mu$ and $\kappa$ being positive constants. The initial data of \eqref{1.1} is given by
\begin{align}\label{1.2}
(\rho^\varepsilon, {\bf{u}}^\varepsilon, {\bf{H}}^\varepsilon)|_{t=0}=(\rho_0, {\bf{u}}_0, {\bf{H}}_0)(x, y)=(\rho_0, u_0, v_0, h_0, g_0)(x, y).
\end{align}
We assume that $\rho_0(x, y)$ is bounded and away from vacuum.
\begin{align*}
0<\underline{\rho}\le\rho_0(x, y)\le\overline{\rho}<+\infty,
\end{align*}
where $\underline{\rho}$ and $\overline{\rho}$ are two constants. And we impose the no-slip boundary condition on the velocity field while the perfectly conducting boundary condition is imposed on the magnetic field.
\begin{align}\label{1.3}
{\bf{u}}^\varepsilon|_{y=0}={\bf{0}},\qquad (\partial_yh^\varepsilon, g^\varepsilon)|_{y=0}=(0, 0).
\end{align}
The goal of this paper is to study the asymptotic behavior of solutions $(\rho^\varepsilon, {\bf{u}}^\varepsilon, {\bf{H}}^\varepsilon)$ to the initial boundary value problem \eqref{1.1}-\eqref{1.3} as $\varepsilon\to0$. Formally, when $\varepsilon=0$, \eqref{1.1} becomes the following inhomogeneous incompressible ideal MHD equations.
\begin{align}\label{1.4}
\begin{cases}
\partial_t\rho^0+{\bf{u}}^0\cdot\nabla\rho^0=0,\\
\rho^0\left(\partial_t{\bf{u}}^0+{\bf{u}}^0\cdot\nabla{\bf{u}}^0\right)+\nabla p^0-{\bf{H}}^0\cdot\nabla {\bf{H}}^0=0,\\
\partial_t{\bf{H}}^0+{\bf{u}}^0\cdot\nabla {\bf{H}}^0-{\bf{H}}^0\cdot\nabla {\bf{u}}^0=0,\\
\nabla\cdot{\bf{u}}^0=0,\qquad \nabla\cdot{\bf{H}}^0=0,
\end{cases}
\end{align}
where $\rho^0$ stands for density, ${\bf{u}}^0=(u^0, v^0)$ velocity field and ${\bf{H}}^0=(h^0, g^0)$ magnetic field. For this system, besides we take the same initial data \eqref{1.2}, and it is sufficient to impose the Dirichlet boundary conditions on normal components of velocity and magnetic field for well-posedness of solutions.
\begin{align}\label{1.5}
(v^0, g^0)|_{y=0}={\bf{0}}.
\end{align}

From the boundary conditions \eqref{1.3} and \eqref{1.5}, it is obvious that there is a mismatch of values between the tangential components $(u^\varepsilon, h^\varepsilon)$ and $(u^0, h^0)$ on the boundary, which is related to the so-called Prandtl boundary layer theory proposed by Prandtl in \cite{P04} in 1904. The Prandtl boundary layer ansatz implies that there is a thin layer of order $\sqrt{\varepsilon}$ near the boundary and there exist three boundary layer correctors of $(\rho_b^0, u_b^0,  h_b^0)\left(t, x, \frac{y}{\sqrt{\varepsilon}}\right)$, such that the solutions to \eqref{1.1} take the following expansion.
\begin{align}\label{1.6}
\begin{cases}
(\rho^\varepsilon, {\bf{u}}^\varepsilon, {\bf{H}}^\varepsilon)(t, x, y)=(\rho^0, {\bf{u}}^0, {\bf{H}}^0)(t, x, y)+(\rho_b^0, u_b^0, 0, h_b^0,
0)\left(t, x, \frac{y}{\sqrt{\varepsilon}}\right)+o(1),\\
p^\varepsilon(t, x, y)=p^0(t, x, y)+o(1),
\end{cases}
\end{align}
where $o(1)$ is supposed to tend to zero as $\varepsilon\to0$ in some topology.

Before proceeding, let us review some related works. The system of equations \eqref{1.1} has been widely studied and well understood for fixed $\varepsilon$, one can refer \cite{AP08, CTW11, DL98, GL97, G14, HW13, XQF22, ZY17} and references therein. However, there are very few results about the vanishing viscosity limit for \eqref{1.1}, which is closely related to the Prandtl boundary layer expansion.
%Due to the Prandtl boundary layer theory \eqref{1.6},
As is known that Prandtl first derived the boundary layer equations \cite{P04} for the incompressible Navier-Stokes equations with no-slip boundary condition on velocity, which is now called as Prandtl equations.
Under the monotonicity condition on the tangential velocity in the vertical direction, Oleinik obtained the local existence of solutions to 2D Prandtl equations by using the Crocoo transformation in 1960s in \cite{O63}. One also refer to the classical book \cite{OS99} for this result and some other related progress in this field. Recently, two groups \cite{AWXY15} and \cite{MW15} respectively reproved this well-posedness result by energy method in Sobolev spaces. Xin and Zhang \cite{XZ04} obtained a global in time existence of weak solutions by imposing an additional favorable condition on pressure. The above results were extended to three dimensional case in \cite{LWY16} and \cite{LWY17}. When the monotonicity condition was violated, the boundary separation phenomenon can be observed and the ill-posedness of the Prandtl equations in Sobolev spaces was thus proved, one can refer to \cite{EE97, GD10, GN12, LWY162, LY17} and the reference therein for details.

Back to MHD equations, the MHD boundary layer equations were derived in  \cite{GP17,LXY19}. Under the assumption that the tangential component of magnetic field has a positive lower bound near the boundary, Liu, the second author and Yang established the well-posedness result of MHD boundary layer equations without the monotonicity condition on velocity by observing a key cancellation mechanism in \cite{LXY19}.
Under the same assumption, the well-posedness result was also obtained for the MHD boundary layer equations without magnetic diffusion
in \cite{LWXY20}. %, and they also prove that the linearized system is ill-posedness if the tangential magnetic of shear layer degenerates at one point.
Li and Xu obtain the well-posedness result for the system without viscosity in \cite{LX21}. Moreover, the well-posedness of solutions was established for the inhomogeneous incompressible MHD boundary layer system \cite{GHY21} and for full compressible MHD boundary layer system in \cite{HLY19}.
All of these results depend on the structure assumption that the tangential component of magnetic field has a positive lower bound.
Except Sobolev framework, the well-posedness result also can be established in analytic spaces \cite{LiX21, LZ21, XY19} and Gevrey spaces \cite{LY21} without this structure assumption.

As mentioned above, we are devoted to verifying the inviscid limit of solutions to \eqref{1.1}-\eqref{1.4}, which is a fundamental but challenging problem in both mathematics and hydrodynamics. There are very few works of the rigorous verification of the Prandtl boundary layer expansion under no-slip boundary condition on velocity in Sobolev space. The convergence of Prandtl boundary layer expansion for the incompressible Navier-Stokes equations was obtained by Sammartino and Caflish  in analytic spaces in \cite{SC, SC2}. This result was reproved by the energy method in \cite{WWZ17}. Maekawa \cite{M14} proved that the inviscid limit provided that the support of initial vorticity of Euler outer flow is away from the boundary, and this result was improved in \cite{KVW20} when the initial data is analytic only close to the boundary.
There are also some works about the Prandtl expansion for the steady flows \cite{GZ, GM19, GI18, GN17, IM20, IM21} in Sobolev spaces and for incompressible Navier-Stokes system in Gevrey spaces \cite{GMM18}.

For plasma, the verification of the Prandtl ansatz for incompressible MHD system was obtained by Liu, the second author and Yang \cite{LXY192} in Sobolev spaces under the assumption that the tangential component of magnetic field does not degenerate near the boundary initially. One can refer to \cite{DLX, LYZ} for the case of steady MHD.
Very recently, the uniform energy estimates have been established for incompressible MHD equations \cite{LXY21} and compressible MHD equations \cite{CLX, CLX2} under the influence of the transverse magnetic field, and thus the inviscid limit was achieved by combining the uniform energy estimates and compact arguments.

Now it is position to state our main result.
\begin{theorem}\label{T1.1}
Let $m\ge 72$ be an integer. The initial data $({\bf{u}}_0, {\bf{H}}_0)$ satisfies the compatibility conditions for both \eqref{1.1}-\eqref{1.3} and \eqref{1.4}-\eqref{1.5} up to order $36$, and the divergence free conditions of $\nabla\cdot{\bf{u}}_0=0$ and $\nabla\cdot{\bf{H}}_0=0$ are satisfied by the initial data $({\bf{u}}_0, {\bf{H}}_0)$. Moreover, assume that the initial tangential magnetic field has a positive lower bound on the boundary. That is,
\begin{align*}
h_0(x, 0)\ge \sigma>0
\end{align*}
for some positive constant $\sigma.$ Then there exists $T_*>0$ which is independent of $\varepsilon$, such that for any smooth solutions $(\rho^\varepsilon, {\bf{u}}^\varepsilon, {\bf{H}}^\varepsilon)$ to \eqref{1.1}-\eqref{1.3} and solutions $(\rho^0, {\bf{u}}^0, {\bf{H}}^0)\in C([0, T_*], H^m)$ to \eqref{1.4}-\eqref{1.5}, and a boundary layer profile $(\rho_b^0, u_b^0, v_b^0, h_b^0, g_b^0)\in C([0, T_*], \mathbb{T}\times\mathbb{R}_+)$ which is a solution to \eqref{boun}, such that
\begin{align}\label{1.7}
\sup\limits_{0\le t\le T_*}\Big\|&({\bf{u}}^\varepsilon, {\bf{H}}^\varepsilon)(t, x, y)-({\bf{u}}^0, {\bf{H}}^0)(t, x, y)\notag\\
&-(u_b^0, \sqrt{\varepsilon}v_b^0, h_b^0, \sqrt{\varepsilon}g_b^0)\left(t, x, \frac y{\sqrt{\varepsilon}}\right)\Big\|_{L_{xy}^\infty}\le C\varepsilon^{\frac12}.
\end{align}
\end{theorem}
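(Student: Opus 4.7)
The plan is to implement a matched asymptotic expansion and then establish uniform co-normal energy estimates for the remainder. First I would fix the outer flow as the $C([0,T_*];H^m)$ solution $(\rho^0,\mathbf{u}^0,\mathbf{H}^0)$ to the ideal MHD system \eqref{1.4}--\eqref{1.5}, whose local existence on a time interval independent of $\varepsilon$ is standard once the lower bound on $\rho_0$ is propagated by the transport equation. I would then derive the boundary-layer system \eqref{boun} for the correctors $(\rho_b^0,u_b^0,v_b^0,h_b^0,g_b^0)(t,x,Y)$ with $Y=y/\sqrt{\varepsilon}$ by plugging the ansatz \eqref{1.6} into \eqref{1.1} and balancing powers of $\sqrt{\varepsilon}$; under the assumption $h_0(x,0)\ge\sigma>0$, well-posedness of this system follows from the inhomogeneous MHD boundary-layer analysis of \cite{GHY21}, and the profile $h^0+h_b^0$ inherits a positive lower bound on a possibly shorter time interval. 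The approximate solution $(\rho^a,\mathbf{u}^a,\mathbf{H}^a)$ is then built by truncating this expansion, possibly including one higher-order corrector so the residual driving the remainder equation is of size $\varepsilon^{1/2}$ in the required norm.

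Second, I would introduce the error $(\rho^R,\mathbf{u}^R,\mathbf{H}^R)=(\rho^\varepsilon,\mathbf{u}^\varepsilon,\mathbf{H}^\varepsilon)-(\rho^a,\mathbf{u}^a,\mathbf{H}^a)$ and write the variable-coefficient MHD system it satisfies, with the original no-slip and perfectly conducting boundary data and a forcing term of size $\varepsilon^{1/2}$. Because the vorticity $\partial_y u_b^0$ of the velocity layer and the density derivative $\partial_y\rho_b^0$ are both of order $\varepsilon^{-1/2}$, standard $L^2$ or $H^1$ estimates cannot close. Following the strategy of \cite{LXY192}, I would use the co-normal vector fields $Z_0=\partial_t$, $Z_1=\partial_x$, $Z_2=\varphi(y)\partial_y$ with $\varphi(y)=y/(1+y)$, and build a density-weighted functional of the form
\begin{equation*}
\mathcal{E}_m(t)=\sum_{|\alpha|\le m}\int_{\Omega}\Bigl((\rho^0+\rho_b^0)|Z^\alpha\mathbf{u}^R|^2+|Z^\alpha\mathbf{H}^R|^2+\frac{|Z^\alpha\rho^R|^2}{\rho^0+\rho_b^0}\Bigr)\,dxdy,
\end{equation*}
together with its dissipative counterpart $\mathcal{D}_m(t)=\mu\varepsilon\sum_{|\alpha|\le m}\|\nabla Z^\alpha\mathbf{u}^R\|^2+\kappa\varepsilon\sum_{|\alpha|\le m}\|\nabla Z^\alpha\mathbf{H}^R\|^2$, and a few normal-derivative estimates of Alinhac-type good unknowns chosen so that the worst stretching terms generated when $Z^\alpha$ hits $\partial_Y u_b^0$ or $\partial_Y\rho_b^0$ cancel against the corresponding magnetic contributions.

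Third, commuting $Z^\alpha$ with the error equations and testing against the appropriate weighted variables, I would derive a differential inequality of the form $\frac{d}{dt}\mathcal{E}_m(t)+\mathcal{D}_m(t)\le C\mathcal{E}_m(t)+C\varepsilon$. The pressure contribution is recovered through the elliptic equation obtained by taking the divergence of the momentum equation; the commutators $[Z^\alpha,\mathbf{u}^a\cdot\nabla]$ are handled via the conormal structure of $\mathbf{u}^a$; and the singular Lorentz-type commutators are absorbed using the positive lower bound on $h^0+h_b^0$. A Gr\"onwall argument then yields $\mathcal{E}_m(t)\le C\varepsilon$ on $[0,T_*]$, and the target $L^\infty$ bound \eqref{1.7} follows from an anisotropic Sobolev embedding that trades tangential and co-normal regularity together with a bounded number of normal derivatives of the good unknowns for $L^\infty$ control.

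The main obstacle, as the abstract emphasizes, is the strong boundary layer of density that is absent in the homogeneous case \cite{LXY192}. The profile $\rho_b^0$ does not vanish at $y=0$ and satisfies $\partial_y\rho_b^0=O(\varepsilon^{-1/2})$, so the coefficient $\rho^a$ in the momentum equation carries boundary-layer-scale fluctuations producing new commutator contributions such as $(Z^\alpha\rho^a)\partial_t\mathbf{u}^R$ and coupling terms $\rho^R\partial_t\mathbf{u}^a$ that were trivial in the constant-density setting. Placing the density weight $\rho^0+\rho_b^0$ on the velocity part of $\mathcal{E}_m$, with the reciprocal weight on $\rho^R$, is what produces the needed cancellation at the top co-normal level, combining the classical inhomogeneous-Euler symmetrization with the MHD cancellation exploited in \cite{LXY192}. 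Verifying that these two cancellation mechanisms coexist at every order of tangential/co-normal differentiation, and that all remaining commutators can be absorbed by $\mathcal{D}_m$ or by the $\varepsilon$-forcing, will be the delicate technical core of the argument.
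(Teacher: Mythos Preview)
Your overall architecture is right---expansion, error system, co-normal energy---but the proposal has a genuine gap at the step that the paper identifies as the heart of the argument. The singular stretching terms in the error system are $v\partial_y\rho^a$, $v\partial_yu^a-g\partial_yh^a$, and $v\partial_yh^a-g\partial_yu^a$, each of size $O(\varepsilon^{-1/2})$. You propose to handle them by putting the weight $\rho^0+\rho_b^0$ on the velocity part of $\mathcal{E}_m$ and its reciprocal on $\rho^R$, and you allude to ``Alinhac-type good unknowns'' without saying what they are. The density weight is the standard inhomogeneous-Euler symmetrization and does \emph{not} kill these singularities; the actual mechanism in the paper (following \cite{LXY192}) is a concrete nonlinear change of unknowns built from the \emph{magnetic stream function} $\psi$ with $h=\partial_y\psi$, $g=-\partial_x\psi$. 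One sets
\[
\tilde{\rho}=\rho-c^p\psi,\quad \tilde{u}=u-\partial_y(a^p\psi),\quad \tilde{v}=v+\partial_x(a^p\psi),\quad \tilde{h}=h-b^p\psi,\quad \tilde{g}=g,
\]
with $a^p=\zeta(y)\,u^p/h^p$, $b^p=\partial_yh^p/h^p$, $c^p=\partial_y\rho^p/h^p$; the lower bound on $h^p$ is needed precisely so that these coefficients are bounded. In the $(\tilde{\rho},\tilde{u},\tilde{v},\tilde{h},\tilde{g})$ variables the $O(\varepsilon^{-1/2})$ terms cancel exactly, and only then can one run a closed energy argument. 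Without specifying this transformation, the inequality $\frac{d}{dt}\mathcal{E}_m+\mathcal{D}_m\le C\mathcal{E}_m+C\varepsilon$ you write cannot be established: the right-hand side picks up terms of order $\varepsilon^{-1/2}\|v^R\|\,\|\rho^R\|$ that no weighting absorbs.

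Two secondary points. First, the paper pushes the expansion to \emph{second} order in both the inner flow and the boundary layer (so six profiles in all), scales the error by $\varepsilon^{3/2}$, and closes on an $O(1)$ energy $N(t)$ carrying $\varepsilon$-weights on higher tangential derivatives; the final $L^\infty$ bound comes from $\|\mathbf{U}\|_{L^\infty}\lesssim\varepsilon^{-5/8}$ via anisotropic embedding, giving $\varepsilon^{3/2-5/8}=\varepsilon^{7/8}$ for the error contribution. A first-order expansion with an unweighted $\mathcal{E}_m\le C\varepsilon$ target would leave too little room. Second, the energy method in the paper requires a matrix symmetrizer $\mathbf{D}=\mathrm{diag}(1,1,1-\rho^\varepsilon(a^p)^2,1-\rho^\varepsilon(a^p)^2)$ to make the first-order part symmetric and the diffusion positive, with the smallness of $a^p$ (no boundary layer at $t=0$) guaranteeing positivity on a short time interval; this structure is not captured by the scalar density weight you propose.
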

\begin{remark}
We do not state the convergence  result from density $\rho^{\varepsilon}$ to $\rho^0$ in Theorem \ref{T1.1},
because we do not derive the estimates of the second order derivatives for the error function of density in this paper, which are essential in obtaining the convergence of density in $L^\infty$ topology.
In fact, by the similar arguments as those in Section $\ref{Sec3.7}$, we can also establish such uniform estimates of the second order derivatives for the error function of $\tilde{\rho}$. However, \eqref{1.7} is enough for our purpose to justify the Prandtl boundary layer ansatz. Consequently, we omit the cumbersome calculations about the uniform estimates of density.
\end{remark}
In what follows, we briefly present the main difficulties and the key ingredients in the proof of Theorem \ref{T1.1}.

(a) To prove Theorem \ref{T1.1}, the crucial step is to establish the uniform energy estimates of the error functions. Similar as the homogeneous incompressible case \cite{LXY192}, the main difficulty comes from the stretching terms $v\partial_y\rho^a$, $v\partial_yu^a-g\partial_yh^a$ and $v\partial_yh^a-g\partial_yu^a$, which behave like $O(\varepsilon^{-\frac12})(v, g)$. As in \cite{LXY192}, we can cancel these singular terms
by introducing the new unknowns $(\tilde{\rho}, \tilde{\bf{u}}, \tilde{\bf{H}})$ which are the combinations of original unknowns $(\rho, \bf{u}, \bf{H})$ and the stream function $\psi$ of magnetic field $\bf{H}$.

(b) For inhomogeneous incompressible MHD case, the variation of density is involved, and there exists a strong boundary layer of density. Consequently, it is necessary to estimate $(\partial_x\tilde{\rho}, \partial_y\tilde{\rho})$ when performing the first order co-normal derivative estimate of ${\bf{U}}$ due to the interaction between density and  velocity. In general,
it is hard to control $\|\partial_xv^a\cdot\partial_y\tilde{\rho}\|_{L^2}$ since the $L^2$ norm of $\partial_y\tilde{\rho}$ is of order $\varepsilon^{-1}$, thus we turn to establish the estimate of $\|y\partial_y\tilde{\rho}\|_{L^2}$ thanks to the H\"ardy's trick and the divergence free condition.

(c) Since $\rho^\varepsilon$ satisfies a transport equation, we can use $\|\nabla{\bf{u}}^\varepsilon\|_{L_{xy}^\infty}$ to control $\|\nabla\rho^\varepsilon\|_{L_{xy}^\infty}$, see Lemma $\ref{L3.6}$. In this way, we can avoid to estimate the second order derivatives of $\tilde{\rho}$ whose proof is very sophisticated. However, as mentioned above, $\partial_yu^a$ is of order $O(\varepsilon^{-\frac12})$ and it is hard to control. Another key observation is that $\partial_y\rho^\varepsilon$ always appears together with the factor of $a^pg^\varepsilon$, which allows us to estimate the conormal derivative $y\partial_y$ to replace the estimate of normal derivative $\partial_y$ by H\"ardy's trick, and we point out that $y\partial_yu^a$ is indeed of order $O(1)$.

(d) To obtain the final $L_{txy}^\infty$ norm of the error functions, we need to derive the estimates of $\partial_{tx}\tilde{{\bf{u}}}$, $\partial_{xx}\tilde{{\bf{u}}}$ and $\partial_{tt}\tilde{{\bf{u}}}$. To this end, the most involved term is of the form $\partial_{tt}\rho^\varepsilon\partial_t\tilde{{\bf{u}}}$ because there is no diffusion term in the equation of $\tilde{\rho}$ and we also do not want to touch the second order derivative estimates of $\tilde{\rho}$.
Consequently, we have to transform $\partial_{tt}\rho^\varepsilon$ into $\partial_{xy}\rho^\varepsilon$, $\partial_{yy}\rho^\varepsilon$ and other lower order terms by the first equation in \eqref{1.1}. By integration by parts, we also can avoid to estimate $\partial_{xy}\rho^\varepsilon$ and $\partial_{yy}\rho^\varepsilon$. Thus, one need to bound the second order normal derivatives of $\tilde{{\bf{u}}}$. Unfortunately, due to the fast variable $\eta=\frac{y}{\sqrt{\varepsilon}}$ and the appearance of the boundary term because of the mixed boundary condition, it is still hard to obtain the estimates of $(\partial_{yy}\tilde{u}, \partial_{yy}\tilde{h})$ by energy method. The key point is that we get the estimates of $(\partial_{yy}\tilde{u}, \partial_{yy}\tilde{h})$ directly from the equation \eqref{3.9} on the cost of $\varepsilon^{-1}$, see Lemma $\ref{L3.4}$.
Moreover, it is also needed to control the $L_t^2(L_{xy}^2)-$norms of $\partial_{yyy}\tilde{h}$ due to the integration by parts which behaves like $O(\varepsilon^{-\frac52})$. To overcome this issue, we also need to establish the estimates for the first order conormal derivative of $y\partial_y{\bf{U}}$. In this way, half order of $\varepsilon$ can be reduced, then the whole energy estimate process is complete.

This paper is organized as follows.  In Section 2, we construct a suitable approximate solution and introduce some useful properties about it. The uniform estimates of error functions are established in Section 3, and the proof of the main theorem is presented in Section 4. Finally, we will provide some proofs and sophisticated expressions in Appendix.

Throughout this paper, we use the notation $A\lesssim B$ to stand for $A\leq C B$ for some generic constant $C>0$ independent of $\varepsilon$. And we denote the polynomial functions by $\mathcal{P}(\cdot)$, which may vary from line to line,  and the commutator is expressed by $[\cdot, \cdot]$. We also use $\partial_y^{-1}f$ to denote the integral of $-\int_y^\infty f(t, x, \tilde{y})\;d\tilde{y}$.

\section{Construction of the approximate solution}
In this section, we are devoted to constructing an approximate solution to \eqref{1.1} which has the following expansion.
\begin{equation}\label{2.1}
\begin{cases}
\rho^a(t, x, y)=\rho^0(t, x, y)+\rho_b^0\left(t, x, \frac{y}{\sqrt{\varepsilon}}\right)+\sqrt{\varepsilon}\left[\rho^1(t, x, y)+\rho_b^1\left(t, x, \frac{y}{\sqrt{\varepsilon}}\right)\right]\\
\qquad\qquad\qquad\qquad\quad+\varepsilon \left[\rho^2(t, x, y)+\rho_b^2\left(t, x, \frac{y}{\sqrt{\varepsilon}}\right)\right]\\
(u^a, h^a)(t, x, y)=(u^0, h^0)(t, x, y)+(u_b^0, h_b^0)\left(t, x, \frac{y}{\sqrt{\varepsilon}}\right)\\
\qquad\qquad\qquad\qquad\quad+\sqrt{\varepsilon}\left[(u^1, h^1)(t, x, y)+(u_b^1, h_b^1)\left(t, x, \frac{y}{\sqrt{\varepsilon}}\right)\right]\\
\qquad\qquad\qquad\qquad\quad+\varepsilon\left[(u^2, h^2)(t, x, y)+(u_b^2, h_b^2)\left(t, x, \frac{y}{\sqrt{\varepsilon}}\right)\right]\\
(v^a, g^a)(t, x, y)=(v^0, g^0)(t, x, y)+\sqrt{\varepsilon}\left[(v_b^0, g_b^0)\left(t, x, \frac{y}{\sqrt{\varepsilon}}\right)+(v^1, g^1)(t, x, y)\right]\\
\qquad\qquad\qquad\qquad\quad+\varepsilon\left[(v_b^1, g_b^1)\left(t, x, \frac{y}{\sqrt{\varepsilon}}\right)+(v^2, g^2)(t, x, y)\right]\\
\qquad\qquad\qquad\qquad\quad+\varepsilon^{\frac32}(v_b^2, g_b^2)\left(t, x, \frac{y}{\sqrt{\varepsilon}}\right)\\
p^a(t, x, y)=p^0(t, x, y)+\sqrt{\varepsilon}p^1(t, x, y)+\varepsilon\left[ p_b^1\left(t, x, \frac{y}{\sqrt{\varepsilon}}\right)+p^2(t, x, y)\right]+\varepsilon^{\frac32}  p_b^2\left(t, x, \frac{y}{\sqrt{\varepsilon}}\right).
\end{cases}
\end{equation}
Since all of boundary layer profiles decay to zero away from the boundary, it is assumed that
\begin{align*}
\lim\limits_{\eta\to +\infty}(\rho_b^i, u_b^i, v_b^i, h_b^i, g_b^i, p_b^i)(t, x, \eta)={\bf{0}},\quad i=0, 1, 2
\end{align*}
with $\eta=\frac{y}{\sqrt{\varepsilon}}$ being the fast variable. And the matching boundary conditions are listed as follows.
\begin{align}\label{2.2}
&u_b^i(t, x, 0)=-u^i(t, x, 0),\quad i=0, 1, 2,\notag\\
&\partial_\eta h_b^0|_{\eta=0}=0,\quad \partial_\eta h_b^1|_{\eta=0}=-\partial_yh^0(t, x, 0),\quad \partial_\eta h_b^2|_{\eta=0}=-\partial_yh^1(t, x, 0)\notag,\\
&(v^1, g^1)(t, x, 0)=-(v_b^0, g_b^0)(t, x, 0),\quad (v^2, g^2)(t, x, 0)=-(v_b^1, g_b^1)(t, x, 0).
\end{align}
Finally, we choose the zero Dirichlet boundary condition for $(v_b^2, g_b^2)$, i.e.
\begin{align*}
(v_b^2, g_b^2)|_{\eta=0}=(0, 0).
\end{align*}
In what follows, we use the notation $\bar{f}(t, x)$ to denote the trace of the function $f(t, x, y)$ on the boundary $\{y=0\}$. To simplify the representation, we introduce the following notations to denote the coefficients of the Taylor expansion of the Euler flows with respect to $y$ variable on $\{y=0\}$.
\begin{align*}
&\varrho^1=\eta\overline{\partial_y\rho^0}+\overline{\rho^1}, \quad \varrho^2=\frac12\eta^2\overline{\partial_y^2\rho^0}+\eta\overline{\partial_y\rho^1}+\overline{\rho^2},\\
&\mathcal{U}^1=\eta\overline{\partial_yu^0}+\overline{u^1},
\quad \mathcal{U}^2=\frac12\eta^2\overline{\partial_y^2u^0}+\eta\overline{\partial_yu^1}+\overline{u^2},\\
&\mathcal{H}^1=\eta\overline{\partial_yh^0}+\overline{h^1},\quad
\mathcal{H}^2=\frac12\eta^2\overline{\partial_y^2h^0}+\eta\overline{\partial_yh^1}+\overline{h^2},\\
&\mathcal{V}^1=\eta\overline{\partial_yv^0}-\overline{v_b^0},\quad
\mathcal{V}^2=\frac12\eta^2\overline{\partial_y^2v^0}+\eta\overline{\partial_yv^1}-\overline{v_b^1},\quad
\mathcal{V}^3=\frac16\eta^3\overline{\partial_y^3v^0}+\frac12\eta^2\overline{\partial_y^2v^1}+\eta\overline{\partial_yv^2},\\
&\mathcal{G}^1=\eta\overline{\partial_yg^0}-\overline{g_b^0},\quad
\mathcal{G}^2=\frac12\eta^2\overline{\partial_y^2g^0}+\eta\overline{\partial_yg^1}-\overline{g_b^1},\quad
\mathcal{G}^3=\frac16\eta^3\overline{\partial_y^3g^0}+\frac12\eta^2\overline{\partial_y^2g^1}+\eta\overline{\partial_yg^2}.
\end{align*}
Here, the matching boundary conditions \eqref{2.2} for $\mathcal{V}^1$, $\mathcal{G}^1$ and $\mathcal{V}^2$, $\mathcal{G}^2$ have been used.
Furthermore, we also use the following notations to denote the derivatives of the above symbols.
\begin{align*}
\partial_i\mathcal{F}^1=\eta\overline{\partial_{iy}f^0}+\overline{\partial_if^1},\quad
\partial_i\mathcal{F}^2=\frac12\eta^2\overline{\partial_{iyy}h^0}+\eta\overline{\partial_{iy}f^1}+\overline{\partial_if^2},
\end{align*}
where $i=t, x, y$.

\subsection{Zeroth order inner flow}
Putting ansatz \eqref{2.1} into \eqref{1.1}, setting the $\varepsilon^0$-th order terms equal to zero and letting $\eta\to +\infty$,
we deduce that the leading order inner flow $(\rho^0, {\bf{u}}^0, {\bf{H}}^0, p^0)$ satisfies
\begin{equation}\label{2.3}
\begin{cases}
\partial_t\rho^0+{\bf{u}}^0\cdot\nabla\rho^0=0,\\
\rho^0\left(\partial_t{\bf{u}}^0+{\bf{u}}^0\cdot\nabla{\bf{u}}^0\right)+\nabla p^0-{\bf{H}}^0\cdot\nabla {\bf{H}}^0=0,\\
\partial_t{\bf{H}}^0+{\bf{u}}^0\cdot\nabla {\bf{H}}^0-{\bf{H}}^0\cdot\nabla {\bf{u}}^0=0,\\
\nabla\cdot{\bf{u}}^0=0,\qquad \nabla\cdot{\bf{H}}^0=0,\\
(v^0, g^0)|_{y=0}=0,\quad (\rho^0, {\bf{u}}^0, {\bf{H}}^0)|_{t=0}=(\rho_0, {\bf{u}}_0, {\bf{H}}_0)(x, y).
\end{cases}
\end{equation}
The well-posedness of solutions to the initial boundary value problem \eqref{2.3} can be found in \cite{S93} and is stated as follows.
\begin{proposition}
\label{E1}
Let $m\ge72$ be an integer, the initial data satisfies that $(\rho_0-1, {\bf{u}}_0, {\bf{H}}_0)\in H^m(\mathbb{T}\times\mathbb{R}_+)$ and the divergence free conditions of $\nabla\cdot{\bf{u}}_0=\nabla\cdot{\bf{H}}_0=0$, then there exists a time $T_1>0$, such that \eqref{2.3} admits a solution $(\rho^0, {\bf{u}}^0, {\bf{H}}^0, p^0)$. Moreover,
\begin{align*}
(\rho^0-1, {\bf{u}}^0, {\bf{H}}^0, \nabla p^0)\in\cap_{j=0}^m C^j\left([0, T_1]; H^{m-j}(\mathbb{T}\times\mathbb{R}_+)\right).
\end{align*}
\end{proposition}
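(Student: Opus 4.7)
The plan is to view \eqref{2.3} as a symmetric hyperbolic system with divergence constraints and apply the classical local well-posedness theory for such systems in the half-plane, along the lines of Secchi \cite{S93}. Since $\rho^0$ is transported by the divergence-free field ${\bf u}^0$, as long as a smooth solution exists it remains bounded below by $\underline{\rho}$ and above by $\overline{\rho}$, so the momentum equation can be divided by $\rho^0$ to bring the system into quasi-linear form in the unknowns $(\rho^0-1,{\bf u}^0,{\bf H}^0)$. The pressure is not dynamical: taking the divergence of the momentum equation and using $\nabla\cdot{\bf u}^0=0$ yields the elliptic Neumann problem
\[
-\nabla\cdot\left(\tfrac{1}{\rho^0}\nabla p^0\right)=\nabla\cdot\left({\bf u}^0\cdot\nabla{\bf u}^0-\tfrac{1}{\rho^0}{\bf H}^0\cdot\nabla{\bf H}^0\right),
\]
with Neumann data read off the vertical component of the momentum equation at $\{y=0\}$ via $v^0|_{y=0}=0$.

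Next I would build solutions by a Picard-type iteration: at step $n+1$ first solve the linear transport equation for $\rho^0_{n+1}$ with velocity ${\bf u}^0_n$, then solve the linearized momentum and induction equations for $({\bf u}^0_{n+1},{\bf H}^0_{n+1})$ with coefficients depending on the previous iterate, and finally determine $p^0_{n+1}$ from the elliptic problem above. The boundary conditions $v^0|_{y=0}=0$ and $g^0|_{y=0}=0$ are preserved because $\{y=0\}$ is characteristic for the underlying transport operators once the normal velocity vanishes there, and the compatibility conditions assumed up to order $36$ ensure that $\partial_t^j (v^0,g^0)|_{y=0,\,t=0}=0$ to whatever order is needed for the $H^m$ traces to make sense.

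The energy estimate in $H^m$ is obtained by applying a tangential derivative $\partial_t^j\partial_x^k$ with $j+k\le m$ and pairing with the natural symmetrizer ($\rho^0\partial_t^j\partial_x^k{\bf u}^0$ on the momentum equation, $\partial_t^j\partial_x^k{\bf H}^0$ on the induction equation, and a $\rho^0$-weighted pairing on the continuity equation). The symmetric structure cancels the top-order terms, while the commutators are absorbed via Moser-type inequalities in terms of $\|(\rho^0-1,{\bf u}^0,{\bf H}^0)\|_{H^m}$. Normal derivatives are then recovered inductively from the tangential ones: $\partial_y v^0$ and $\partial_y g^0$ from the divergence-free conditions, and $\partial_y$ of the tangential components from the momentum and induction equations once $\nabla p^0$ has been controlled through the elliptic estimate. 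A standard Gronwall argument closes the bounds on a time interval $T_1>0$ depending only on $\|(\rho_0-1,{\bf u}_0,{\bf H}_0)\|_{H^m}$, and time regularity $\bigcap_{j=0}^m C^j([0,T_1];H^{m-j})$ follows by differentiating the equations in $t$ and invoking the compatibility conditions at $t=0$.

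The hard step is the pressure estimate: the elliptic equation has coefficient $1/\rho^0$ of only $H^m$ regularity and its Neumann data involves one normal derivative of $({\bf u}^0,{\bf H}^0)$ at $\{y=0\}$, so one must juggle traces carefully to produce $\nabla p^0\in\bigcap_{j=0}^m C^j([0,T_1];H^{m-j})$ without losing derivatives, and to propagate this regularity through the iteration. All of these steps are carried out for the inhomogeneous ideal MHD system in Secchi \cite{S93}; since the present problem differs only in the geometry of the domain, the proposition follows by essentially invoking that result with only cosmetic modifications to accommodate $\mathbb{T}\times\mathbb{R}_+$.
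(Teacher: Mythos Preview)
Your proposal is correct and aligns with the paper's treatment: the paper does not prove this proposition at all but simply invokes \cite{S93}, and your sketch accurately outlines the symmetric-hyperbolic framework, the elliptic recovery of the pressure, and the iteration scheme underlying that reference. If anything, you have provided more content than the paper itself, which cites Secchi without further comment.
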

\begin{remark}
The constant $1$ is not essential which can be replaced by any positive constant. And we also require that the initial density is bounded and away from vacuum.  For the initial magnetic field, we require that the tangential component of magnetic field ${\bf{H}}_0$ has a positive lower bound on the boundary for the solvability of the MHD boundary layer equations in Sobolev spaces. That is, the condition $h_0(x, 0)\geq \sigma>0$ is stated in Theorem \ref{T1.1}.
\end{remark}

\subsection{Zeroth order boundary layer}
Putting ansatz \eqref{2.1} into \eqref{1.1} again, setting the $\varepsilon^0$-th order terms equal to zero and using \eqref{2.3},
we deduce that the leading order boundary layer profile $(\rho_b^0, u_b^0, v_b^0, h_b^0, g_b^0)(t, x, \eta)$ is governed by
\begin{align}\label{2.4}
\partial_t\rho_b^0+\left(\overline{u^0}+u_b^0\right)\partial_x\rho_b^0+\left(\mathcal{V}^1+v_b^0\right)\partial_\eta\rho_b^0+\overline{\partial_x\rho^0}u_b^0=0,
\end{align}
\begin{align}\label{2.5}
&\left(\overline{\rho^0}+\rho_b^0\right)\left(\partial_tu_b^0+(\overline{u^0}+u_b^0)\partial_xu_b^0+(\mathcal{V}^1+v_b^0)\partial_\eta u_b^0\right)-(\overline{h^0}+h_b^0)\partial_xh_b^0\notag\\
&\quad-(\mathcal{G}^1+g_b^0)\partial_\eta h_b^0
+\overline{\partial_tu^0}\rho_b^0+\overline{\partial_xu^0}\left(\overline{\rho^0}u_b^0+\rho_b^0\overline{u^0}+\rho_b^0 u_b^0\right)-\overline{\partial_xh^0}h_b^0-\mu\partial_\eta^2u_b^0=0
\end{align}
and
\begin{align}\label{2.6}
&\partial_th_b^0+(\overline{u^0}+u_b^0)\partial_xh_b^0+\left(\mathcal{V}^1+v_b^0\right)\partial_\eta h_b^0-(\overline{h^0}+h_b^0)\partial_xu_b^0
\notag\\
&\quad-(\mathcal{G}^1+g_b^0)\partial_\eta u_b^0+\overline{\partial_xh^0}u_b^0
-\overline{\partial_xu^0}h_b^0-\kappa\partial_\eta^2h_b^0=0.
\end{align}
The divergence free conditions are satisfied.
\begin{align*}
\partial_xu_b^0+\partial_\eta v_b^0=0,\qquad \partial_xh_b^0+\partial_\eta g_b^0=0.
\end{align*}
From \eqref{2.2}, we impose the following boundary conditions and far-field conditions.
\begin{align*}
(u_b^0, \partial_\eta h_b^0)|_{\eta=0}=-\left(\overline{u^0}(t, x), 0\right), \qquad \lim\limits_{\eta\to\infty}(u_b^0, h_b^0)={\bf{0}}.
\end{align*}
The initial data is chosen to be zero.
\begin{align*}
(u_b^0, h_b^0)|_{t=0}={\bf{0}}.
\end{align*}
%Since the calculation is regular and redundant, we omit the details.
\begin{remark}
Let
\begin{align*}
(\rho^p, u^p, v^p, h^p, g^p)(t, x, \eta)=(\overline{\rho^0}, \overline{u^0}, \mathcal{V}^1, \overline{h^0}, \mathcal{G}^1)(t, x)+(\rho_b^0, u_b^0, v_b^0, h_b^0, g_b^0)(t, x, \eta),
\end{align*}
then we find $(\rho^p, u^p, v^p, h^p, g^p)(t, x, \eta)$ solves the following initial boundary value problem.
\begin{align}\label{boun}
\begin{cases}
\partial_t\rho^p+(u^p\partial_x+v^p\partial_\eta)\rho^p=0,\\
\rho^p\partial_tu^p+\rho^p(u^p\partial_x+v^p\partial_\eta)u^p-(h^p\partial_x+g^p\partial_\eta)h^p=\mu\partial_\eta^2u^p-\overline{\partial_xp^0},\\
\partial_th^p+(u^p\partial_x+v^p\partial_\eta)h^p-(h^p\partial_x+g^p\partial_\eta)u^p=\kappa\partial_\eta^2h^p,\\
\partial_xu^p+\partial_\eta v^p=0,\quad \partial_xh^p+\partial_\eta g^p=0,\\
(u^p, v^p, \partial_\eta h^p, g^p)|_{\eta=0}={\bf{0}}, \quad \lim\limits_{\eta\to +\infty}(\rho^p, u^p, h^p)(t, x, \eta)=(\overline{\rho^0}, \overline{u^0}, \overline{h^0}).
\end{cases}
\end{align}
%which is just the boundary layer system.
\end{remark}
Based on the main theorem in \cite{GHY21} and Proposition $\ref{E1}$, we have the local well-posedness of solutions to the initial boundary value problem for MHD boundary layer equations \eqref{boun}.
\begin{proposition}
\label{P2.4}
Let $(\rho^0, {\bf{u}}^0, {\bf{H}}^0, p^0)$ be a solution to \eqref{2.3} in Proposition \ref{E1}, then the system \eqref{boun} admits a unique solution $(\rho^p, u^p, v^p, h^p, g^p)(t, x, \eta)$ on $[0, T_2]$ where $T_2\in [0, T_1]$, and for any $t\in[0, T_2]$,
\begin{align*}
h^p(t, x, \eta)\ge\frac{\sigma}{2}
\end{align*}
with constant $\sigma$ being defined in Theorem $\ref{T1.1}$. Moreover, it holds that for any $l>0$,
\begin{align*}
\begin{array}{ll}
(\rho_b^0, u_b^0, h_b^0)=(\rho^p-\overline{\rho^0}, u^p-\overline{u^0}, h^p-\overline{h^0})\in\cap_{i=0}^{[m/2]-1}W^{i, \infty}\left([0, T_2]; H_l^{[m/2]-1-i}(\mathbb{T}\times\mathbb{R}_+)\right),\\
(v_b^0, g_b^0)\in\cap_{i=0}^{[m/2]-2}W^{i, \infty}\left([0, T_2]; H_l^{[m/2]-2-i}(\mathbb{T}\times\mathbb{R}_+)\right).
\end{array}
\end{align*}
\end{proposition}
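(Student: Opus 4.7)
The plan is to reduce Proposition~\ref{P2.4} to the well-posedness result of \cite{GHY21} for the inhomogeneous MHD boundary layer system, treating the outer-flow traces $\overline{\rho^0}, \overline{u^0}, \overline{h^0}, \overline{\partial_x p^0}$ and the shear coefficients $\mathcal{V}^1,\mathcal{G}^1$ as given forcing/coefficients. By Proposition~\ref{E1}, the ideal MHD flow $(\rho^0,{\bf u}^0,{\bf H}^0,p^0)$ exists on $[0,T_1]$ with $(\rho^0-1,{\bf u}^0,{\bf H}^0,\nabla p^0)\in\bigcap_{j=0}^m C^j([0,T_1];H^{m-j})$, so their traces on $\{y=0\}$ (and hence $\overline{\rho^0},\overline{u^0},\overline{h^0},\overline{\partial_x p^0}$ as well as $\mathcal{V}^1,\mathcal{G}^1$, which are linear in $\eta$) are smooth in $(t,x)$ with sufficiently high regularity. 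Thus the system \eqref{boun} is a closed quasilinear problem for $(\rho^p,u^p,h^p)$ with $v^p,g^p$ recovered by the divergence-free conditions and the boundary conditions at $\eta=0$.

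First I would check that the initial and boundary data fit the framework of \cite{GHY21}. The choice $(\rho_b^0,u_b^0,h_b^0)|_{t=0}=0$ gives $(\rho^p,u^p,h^p)|_{t=0}=(\rho_0(x,0),u_0(x,0),h_0(x,0))$, which inherits Sobolev regularity from the assumptions on $(\rho_0,{\bf u}_0,{\bf H}_0)$. The compatibility of the data with the mixed boundary conditions $(u^p,\partial_\eta h^p)|_{\eta=0}=0$ up to the required order follows from the compatibility conditions assumed in Theorem~\ref{T1.1}. The hypothesis $h_0(x,0)\ge\sigma>0$ gives the positive lower bound $h^p|_{t=0}\ge\sigma$ that is the structural assumption of \cite{GHY21}. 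Applying their local existence theorem then yields a unique solution on some maximal interval $[0,T_2^*]$ with $T_2^*\in(0,T_1]$.

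Next I would extract the quantitative lower bound on $h^p$. The third equation of \eqref{boun} together with $h^p|_{t=0}\ge\sigma$ and the parabolic maximum principle (noting that the vertical diffusion $\kappa\partial_\eta^2 h^p$ respects the Neumann condition $\partial_\eta h^p|_{\eta=0}=0$ and the far-field limit $\overline{h^0}$ which is itself controlled near the initial time) shows that $h^p$ stays close to its initial value in $L^\infty$ on a sufficiently short interval. Choosing $T_2\in(0,T_2^*]$ small enough to guarantee $\|h^p(t)-h^p(0)\|_{L^\infty}\le\sigma/2$ gives $h^p(t,x,\eta)\ge\sigma/2$ on $[0,T_2]$. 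Shrinking $T_2$ further if necessary, the local solution of \cite{GHY21} exists on $[0,T_2]$.

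The polynomial-weight Sobolev estimates are obtained by propagating weighted energies: since the perturbations $(\rho_b^0,u_b^0,h_b^0)=(\rho^p-\overline{\rho^0},u^p-\overline{u^0},h^p-\overline{h^0})$ satisfy evolution equations whose coefficients involve only the traces of the outer flow and the boundary-layer quantities themselves (which vanish as $\eta\to\infty$), multiplication by the weight $\langle\eta\rangle^l$ produces commutators that are controlled by lower-order weighted norms. A standard induction on $l$ and on the number of derivatives, using the co-normal energy framework of \cite{GHY21}, gives the regularity claim with $(v_b^0,g_b^0)$ losing one normal derivative compared to $(\rho_b^0,u_b^0,h_b^0)$ because they are reconstructed by integration of $\partial_\eta v^p=-\partial_x u^p$ and $\partial_\eta g^p=-\partial_x h^p$. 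The main obstacle is precisely this weighted propagation together with the preservation of $h^p\ge\sigma/2$, both of which dictate the final choice of $T_2$; once the positivity of $h^p$ is secured, the energy method of \cite{GHY21} in weighted spaces closes without further surprises.
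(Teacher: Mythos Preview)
Your proposal is correct and follows the same approach as the paper: the paper does not give a self-contained proof of Proposition~\ref{P2.4} but simply states it as a consequence of the main theorem in \cite{GHY21} together with Proposition~\ref{E1}. Your write-up supplies the natural details of that reduction (checking the data, securing $h^p\ge\sigma/2$ by continuity in time, and propagating weighted norms), which the paper leaves implicit.
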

Here and after, for any integer $m$ and any $l>0$, the weighted Sobolev spaces $H_l^m(\mathbb{T}\times\mathbb{R}_+)$ are defined as follows.
\begin{align*}
H_l^m(\mathbb{T}\times\mathbb{R}_+):=\left\{f(x, \eta): \|f\|_{H_l^m}:=\left(\sum\limits_{m_1+m_2\le m}\|\langle\eta\rangle^{l+m_2}\partial_x^{m_1}\partial_{\eta}^{m_2}f\|_{L^2(\mathbb{T}\times\mathbb{R}_+)}^2\right)^{\frac12}<+\infty\right\},
\end{align*}
where $\langle\eta\rangle=1+\eta$.

\subsection{First order inner flow}
Similarly, by putting ansatz \eqref{2.1} into \eqref{1.1}, setting the $\varepsilon^{\frac12}$-th order terms equal to zero and letting $\eta\to +\infty$, we deduce that the first order inner flow $(\rho^1, {\bf{u}}^1, {\bf{H}}^1, p^1)$ satisfies the following linearized ideal MHD equations around $(\rho^0, {\bf{u}}^0, {\bf{H}}^0, p^0)$.
\begin{align}\label{2.7}
\begin{cases}
\partial_t\rho^1+{\bf{u}}^0\cdot\nabla\rho^1+{\bf{u}}^1\cdot\nabla\rho^0=0,\\
\rho^0\left(\partial_t{\bf{u}}^1+{\bf{u}}^0\cdot\nabla{\bf{u}}^1\right)+\nabla p^1-{\bf{H}}^0\cdot\nabla {\bf{H}}^1+\rho^0{\bf{u}}^1\cdot\nabla{\bf{u}}^0+\rho^1\left(\partial_t{\bf{u}}^0+{\bf{u}}^0\cdot\nabla{\bf{u}}^0\right)-{\bf{H}}^1\cdot\nabla {\bf{H}}^0=0,\\
\partial_t{\bf{H}}^1+{\bf{u}}^0\cdot\nabla {\bf{H}}^1-{\bf{H}}^0\cdot\nabla {\bf{u}}^1+{\bf{u}}^1\cdot\nabla {\bf{H}}^0-{\bf{H}}^1\cdot\nabla {\bf{u}}^0=0,\\
\nabla\cdot{\bf{u}}^1=0,\qquad \nabla\cdot{\bf{H}}^1=0.
\end{cases}
\end{align}
The initial data is chosen to be zero
\begin{align}
\label{i1}
(\rho^1, {\bf{u}}^1, {\bf{H}}^1)|_{t=0}={\bf{0}},
\end{align}
and the boundary condition is imposed according to the matching conditions \eqref{2.2}.
\begin{align}
\label{b1}
(v^1, g^1)(t, x, 0)=-(v_b^0, g_b^0)(t, x, 0).
\end{align}
Similar as Proposition \ref{E1}, we also have the well-posedness result for \eqref{2.7}.
\begin{proposition}
\label{P2.5}
Let $(\rho^0, {\bf{u}}^0, {\bf{H}}^0, p^0)$ be a smooth solution to \eqref{2.3} in Proposition \ref{E1}, then there exists a unique solution $(\rho^1, {\bf{u}}^1, {\bf{H}}^1, p^1)$ to \eqref{2.7}-\eqref{b1}, and there exists a time $T_3\in [0, T_2]$, such that
\begin{align*}
(\rho^1, {\bf{u}}^1, {\bf{H}}^1, \nabla p^1)\in\cap_{j=0}^{[m/2]-2}C^j\left([0, T_3]; H^{[m-2]-2-j}(\mathbb{T}\times\mathbb{R}_+)\right).
\end{align*}
\end{proposition}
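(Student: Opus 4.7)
The strategy for Proposition \ref{P2.5} is to exploit the linearity of \eqref{2.7} around the smooth background $(\rho^0, {\bf{u}}^0, {\bf{H}}^0, p^0)$ produced by Proposition \ref{E1}, and to adapt the classical argument used for the nonlinear system \eqref{2.3} in \cite{S93}. The only genuinely new feature relative to Proposition \ref{E1} is the non-homogeneous boundary condition \eqref{b1}, in which the traces of $(v_b^0, g_b^0)$ furnished by Proposition \ref{P2.4} appear as prescribed data.

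First I would homogenize \eqref{b1} by constructing a divergence-free lift. Since $(v_b^0, g_b^0)(t, x, 0)$ is smooth in $(t, x) \in [0, T_2] \times \mathbb{T}$, one can introduce stream-function potentials $\psi^*(t, x, y)$ and $\phi^*(t, x, y)$, truncated by a cut-off in $y$ that equals one near the boundary and vanishes away from it, and set
\begin{equation*}
{\bf{U}}^* = (\partial_y\psi^*, -\partial_x\psi^*), \qquad {\bf{H}}^* = (\partial_y\phi^*, -\partial_x\phi^*),
\end{equation*}
chosen so that $v^*|_{y=0} = -v_b^0(t, x, 0)$, $g^*|_{y=0} = -g_b^0(t, x, 0)$, and $({\bf{U}}^*, {\bf{H}}^*)|_{t=0} = {\bf 0}$. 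The residuals $\tilde{\bf{u}} := {\bf{u}}^1 - {\bf{U}}^*$ and $\tilde{\bf{H}} := {\bf{H}}^1 - {\bf{H}}^*$ then obey a system with the same principal structure as \eqref{2.7} but with homogeneous boundary data and smooth forcing terms.

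Next I would close the linear system by a Picard iteration. The transport equation for $\rho^1$ has smooth drift ${\bf{u}}^0$ and can be solved by the method of characteristics for any given $\tilde{\bf{u}}$. The pressure $p^1$ is the Lagrange multiplier associated to the incompressibility constraint: taking the divergence of the momentum equation yields an elliptic Poisson problem for $p^1$ with Neumann data read off from the normal component of that equation at $y=0$, and classical elliptic theory supplies $p^1$ with the regularity demanded by the momentum equation. The remaining system for $(\tilde{\bf{u}}, \tilde{\bf{H}})$ is of symmetric-hyperbolic type with smooth coefficients, so standard $H^s$ energy estimates apply. The key algebraic mechanism is the cancellation of the two Maxwell-tension cross terms $-{\bf{H}}^0 \cdot \nabla\tilde{\bf{H}}$ and $-{\bf{H}}^0 \cdot \nabla\tilde{\bf{u}}$ after pairing with $\tilde{\bf{u}}$ and $\tilde{\bf{H}}$ respectively and integrating by parts, together with the antisymmetry of ${\bf{u}}^0 \cdot \nabla$ in $L^2$ which follows from $\nabla\cdot{\bf{u}}^0 = 0$ and $v^0|_{y=0} = 0$. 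Moser-type commutator estimates dress the convective and magnetic terms at the $H^s$ level, and Gr\"onwall's inequality produces existence on some $[0, T_3]$ with $T_3 \in (0, T_2]$, together with uniqueness from a parallel estimate applied to the difference of two solutions.

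The main obstacle I anticipate is the bookkeeping of regularity rather than any analytic subtlety. The boundary data $(v_b^0, g_b^0)(t, x, 0)$ inherits only the reduced regularity of Proposition \ref{P2.4}, so any divergence-free lift in the bulk is controlled at best in $H^{[m/2]-2}$, and the transport equation for $\rho^1$ loses a further derivative whenever one time-differentiates. Combined with the one-derivative loss in solving the pressure Poisson problem (recovered via elliptic regularity), this chain of losses dictates the final index $[m/2]-2$ in the stated regularity class. Verifying carefully that the compatibility conditions assumed in Theorem \ref{T1.1} up to order $36$ propagate through the lift and the Picard scheme, so that the $C^j$-in-time regularity is achieved rather than merely $L^\infty$-in-time, is the only delicate step, and it is routine given the smoothness of all coefficients on $[0, T_2]$.
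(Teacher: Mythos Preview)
Your proposal is correct and aligns with the paper's own treatment: the paper provides no detailed proof of Proposition~\ref{P2.5} at all, stating only ``Similar as Proposition~\ref{E1}, we also have the well-posedness result for \eqref{2.7},'' thereby deferring to the method of \cite{S93} used for the nonlinear background system. Your sketch---homogenize the inhomogeneous boundary data \eqref{b1} by a divergence-free lift, then run the standard symmetric-hyperbolic energy estimates with the usual pressure-Poisson and transport ingredients---is precisely the adaptation one needs, and your discussion of the regularity ceiling imposed by the $H^{[m/2]-2}$ control on $(v_b^0, g_b^0)$ from Proposition~\ref{P2.4} correctly explains the index appearing in the conclusion.
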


\subsection{First order boundary layer}
Putting ansatz \eqref{2.1} into \eqref{1.1}, setting the $\varepsilon^{\frac12}$-th order terms equal to zero and using \eqref{2.4}-\eqref{2.6},
we deduce that the first order boundary layer profile $(\rho_b^1, u_b^1, v_b^1, h_b^1, g_b^1)(t, x, \eta)$ is described by
\begin{equation}\label{2.8}
\begin{cases}
\partial_t\rho_b^1+\left(\overline{u^0}+u_b^0\right)\partial_x\rho_b^1+\left(\mathcal{V}^1+v_b^0\right)\partial_\eta\rho_b^1
+\left(\overline{\partial_x\rho^0}+\partial_x\rho_b^0\right)u_b^1+\partial_\eta\rho_b^0v_b^1=-f_1,\\

\left(\overline{\rho^0}+\rho_b^0\right)\left(\partial_tu_b^1+(\overline{u^0}+u_b^0)\partial_xu_b^1+(\mathcal{V}^1+v_b^0)\partial_\eta u_b^1\right)\\
\quad+\left(\overline{\partial_tu^0}+\partial_tu_b^0+(\overline{u^0}+u_b^0)(\overline{\partial_xu^0}+\partial_xu_b^0)+\left(\mathcal{V}^1+v_b^0\right)\partial_{\eta}u_b^0\right)\rho_b^1\\
\quad+\left(\overline{\rho^0}+\rho_b^0\right)\left(\overline{\partial_xu^0}+\partial_xu_b^0\right)u_b^1+\left(\overline{\rho^0}+\rho_b^0\right)\partial_{\eta}u_b^0v_b^1\\
\quad-(\overline{h^0}+h_b^0)\partial_xh_b^1-(\mathcal{G}^1+g_b^0)\partial_\eta h_b^1
-\left(\overline{\partial_xh^0}+\partial_xh_b^0\right)h_b^1-\partial_\eta h_b^0g_b^1-\mu\partial_\eta^2u_b^1=-f_2,\\

\partial_th_b^1+(\overline{u^0}+u_b^0)\partial_xh_b^1+(\mathcal{V}^1+v_b^0)\partial_\eta h_b^1+\left(\overline{\partial_xh^0}+\partial_xh_b^0\right)u_b^1+\partial_\eta h_b^0v_b^1\\
\quad-(\overline{h^0}+h_b^0)\partial_xu_b^1-(\mathcal{G}^1+g_b^0)\partial_\eta u_b^1
-\left(\overline{\partial_xu^0}+\partial_xu_b^0\right)h_b^1-\partial_\eta u_b^0g_b^1
-\kappa\partial_\eta^2h_b^1=-f_3,\\
\partial_xu_b^1+\partial_\eta v_b^1=0,\qquad \partial_xh_b^1+\partial_\eta g_b^1=0,\\
(u_b^1, \partial_\eta h_b^1)|_{\eta=0}=-\left(\overline{u^1}(t, x), \overline{\partial_yh^0}(t, x)\right), \qquad \lim\limits_{\eta\to\infty}(u_b^1, h_b^1)={\bf{0}},\\
(u_b^1, h_b^1)|_{t=0}={\bf{0}},
\end{cases}
\end{equation}
where the source terms $f_i\ (i=1,2,3)$ are defined as follows.
\begin{align*}
f_1=\mathcal{U}^1\partial_x\rho_b^0+\mathcal{V}^2\partial_\eta\rho_b^0+\partial_x\varrho^1u_b^0+\overline{\partial_y\rho^0}v_b^0,
\end{align*}
\begin{align*}
f_2=&\partial_t\mathcal{U}^1\rho_b^0+\varrho^1\partial_tu_b^0
+\left[\left(\overline{\rho^0}+\rho_b^0\right)\mathcal{U}^1+\left(\overline{u^0}+u_b^0\right)\varrho^1\right]\partial_xu_b^0\\
&+\left(\rho_b^0\mathcal{U}^1+\varrho^1u_b^0\right)\overline{\partial_xu^0}+\left(\overline{\rho^0}u_b^0+\rho_b^0\overline{u^0}+\rho_b^0u_b^0\right)\partial_x\mathcal{U}^1\\
&+\left[\left(\overline{\rho^0}+\rho_b^0\right)\mathcal{V}^2+\varrho^1(\mathcal{V}^1+v_b^0)\right]\partial_\eta u_b^0+\left(\overline{\rho^0}v_b^0+\rho_b^0\mathcal{V}^1+\rho_b^0v_b^0\right)\overline{\partial_yu^0}\\
&-h_b^0\partial_x\mathcal{H}^1
-g_b^0\overline{\partial_yh^0}-\mathcal{H}^1\partial_xh_b^0-\mathcal{G}^2\partial_\eta h_b^0
\end{align*}
and
\begin{align*}
f_3=&u_b^0\partial_x\mathcal{H}^1+v_b^0\overline{\partial_yh^0}+\mathcal{U}^1\partial_xh_b^0+\mathcal{V}^2\partial_\eta h_b^0\\
&-h_b^0\partial_x\mathcal{U}^1-g_b^0\overline{\partial_yu^0}-\mathcal{H}^1\partial_xu_b^0-\mathcal{G}^2\partial_\eta u_b^0.
\end{align*}
Moreover, we define the boundary layer of pressure in the following way, which makes the terms of order $\varepsilon^{\frac12}$ in the equation of $v^\varepsilon$ equal to zero.
\begin{align}\label{2.9}
p_b^1(t, x, \eta)=\int_{\eta}^\infty &\{\left(\overline{\rho^0}+\rho_b^0\right)\left(\partial_tv_b^0+(\overline{u^0}+u_b^0)\partial_xv_b^0+(\mathcal{V}^1+v_b^0)\partial_\eta v_b^0\right)\notag\\
+&\partial_t\mathcal{V}^1\rho_b^0+\left(\overline{\rho^0}u_b^0+\rho_b^0\overline{u^0}+\rho_b^0u_b^0\right)\partial_x\mathcal{V}^1\notag\\
+&\left(\rho_b^0(\mathcal{V}^1+v_b^0)+\overline{\rho_0}v_b^0\right)\overline{\partial_yv^0}-\left(\overline{h^0}+h_b^0\right)\partial_xg_b^0\notag\\
-&\left(\mathcal{G}^1+g_b^0\right)\partial_\eta g_b^0
-h_b^0\partial_x\mathcal{G}^1-g_b^0\overline{\partial_yg^0}-\mu\partial_{\eta\eta}v_b^0\}(t, x, \tilde{\eta})\;d\tilde{\eta}.
\end{align}
%Here we also omit the details since the calculation is straight but tedious.
Similar as Proposition \ref{P2.4}, we have the local well-posedness for \eqref{2.8}.
\begin{proposition}
\label{P2.6}
Let $(\rho^0, {\bf{u}}^0, {\bf{H}}^0, p^0)$ and $(\rho^1, {\bf{u}}^1, {\bf{H}}^1, p^1)$ be the solutions to \eqref{2.3} and \eqref{2.7} respectively. Let $(\rho_b^0, u^0_b, v_b^0, h^0_b, g_b^0)(t, x, \eta)$ be a solution to \eqref{2.4}-\eqref{2.6}. Then the system \eqref{2.8} admits a unique solution $(\rho_b^1, u_b^1, h_b^1)$ on $[0, T_4]$ where $T_4\in [0, T_3]$, and for any $l>0$,
\begin{align*}
\begin{array}{ll}
(\rho_b^1, u_b^1, h_b^1)\in\cap_{i=0}^{[m/4]-3}W^{i, \infty}\left([0, T_4]; H_l^{[m/4]-3-i}(\mathbb{T}\times\mathbb{R}_+)\right),\\
(v_b^1, g_b^1)\in\cap_{i=0}^{[m/4]-4}W^{i, \infty}\left([0, T_4]; H_l^{[m/4]-4-i}(\mathbb{T}\times\mathbb{R}_+)\right).
\end{array}
\end{align*}
\end{proposition}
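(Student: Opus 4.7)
The plan is to view \eqref{2.8} as a linear inhomogeneous system obtained from linearizing \eqref{boun} about the zeroth order profile $(\rho^p, u^p, v^p, h^p, g^p)$ constructed in Proposition \ref{P2.4}, with forcing terms $f_1, f_2, f_3$ depending only on previously constructed quantities. Since the principal coefficients are those of \eqref{boun}, the essential structural condition $h^p \geq \sigma/2$ from Proposition \ref{P2.4} is available throughout, and this is precisely what will allow Sobolev-type well-posedness without any monotonicity assumption on $u_b^0$.

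First I would homogenize the boundary conditions at $\eta=0$. Choose a smooth cut-off $\chi(\eta)$ supported near $0$ and set
\begin{align*}
\Phi^u(t, x, \eta) = -\chi(\eta)\,\overline{u^1}(t, x), \qquad \Phi^h(t, x, \eta) = -\eta\,\chi(\eta)\,\overline{\partial_y h^0}(t, x),
\end{align*}
so that $\Phi^u|_{\eta=0} = -\overline{u^1}$ and $\partial_\eta \Phi^h|_{\eta=0} = -\overline{\partial_y h^0}$. Substituting $u_b^1 \mapsto u_b^1 + \Phi^u$ and $h_b^1 \mapsto h_b^1 + \Phi^h$ (and adjusting $v_b^1, g_b^1$ via the divergence-free constraints) reduces the system to one with zero boundary data, at the cost of modifying $f_1, f_2, f_3$ by controlled terms of the same regularity class. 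Next I would verify that the modified sources lie in weighted spaces $H_l^k$ for $k \leq [m/4]-3$: every polynomially growing Taylor coefficient $\varrho^1, \mathcal{U}^1, \mathcal{V}^2, \mathcal{H}^1, \mathcal{G}^2$ appears only multiplied by $\partial_x$ or $\partial_\eta$ of a zeroth order boundary layer profile, whose rapid decay in $\eta$ (Proposition \ref{P2.4}) absorbs the growth.

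The main analytic step is to adapt the cancellation mechanism of \cite{LXY19, GHY21} to this linearized setting. The dangerous terms are the stretching contributions $(\overline{\rho^0}+\rho_b^0)\partial_\eta u_b^0 \cdot v_b^1$ in the velocity equation and $\partial_\eta h_b^0 \cdot g_b^1$ in the magnetic equation, because $v_b^1 = -\int_0^\eta \partial_x u_b^1\,d\tilde\eta + v_b^1|_{\eta=0}$ and $g_b^1 = -\int_0^\eta \partial_x h_b^1\,d\tilde\eta$ would otherwise cost one tangential derivative. Following \cite{LXY19}, one combines the linearized momentum equation with the linearized magnetic equation pre-multiplied by $\partial_\eta u_b^0 / h^p$ (and analogously with $\partial_\eta h_b^0 / h^p$ acting on the velocity component); the factor $1/h^p$ is bounded thanks to $h^p \geq \sigma/2$. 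The resulting reformulation has a good symmetric structure, and weighted conormal energy estimates at level $k$ close up using the parabolic regularization $\mu \partial_\eta^2 u_b^1$ and $\kappa \partial_\eta^2 h_b^1$ together with a standard treatment of the transport density equation via characteristics.

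With uniform weighted estimates in hand, existence follows from a standard Picard iteration or Galerkin approximation on the linear system; uniqueness is immediate from the energy identity applied to the difference of two solutions. The recovered $v_b^1, g_b^1$ are obtained from the divergence-free relations, with boundary value $v_b^1|_{\eta=0} = \int_0^\infty \partial_x u_b^1 \, d\tilde\eta$ (which then fixes the boundary datum $v^2|_{y=0}$ in \eqref{2.2}) and analogously for $g_b^1$; these quantities lose one conormal derivative compared to $u_b^1, h_b^1$, which accounts for the stated regularity index $[m/4]-4$ versus $[m/4]-3$. The main obstacle is precisely the cancellation step: verifying that the linearized analog of the nonlinear cancellation of \cite{LXY19, GHY21} still closes in the presence of the variable density factor $\overline{\rho^0}+\rho_b^0$ and of the inhomogeneous source terms $f_1, f_2, f_3$, in particular that the derivative loss from $v_b^1, g_b^1$ is completely absorbed by the magnetic coupling; once this is achieved, the remaining computations are routine weighted conormal energy estimates yielding a lifespan $T_4 \in [0, T_3]$.
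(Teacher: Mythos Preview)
Your proposal is correct and in fact considerably more detailed than what the paper provides: the paper does not give a proof of this proposition at all, merely stating ``Similar as Proposition \ref{P2.4}, we have the local well-posedness for \eqref{2.8}'' and relying on the methods of \cite{GHY21} and \cite{LXY19}. Your outline---viewing \eqref{2.8} as a linearization of \eqref{boun}, homogenizing the boundary data, verifying the weighted regularity of the sources, and closing the estimates via the magnetic cancellation mechanism under the nondegeneracy $h^p\ge\sigma/2$---is precisely the content that ``similar as Proposition \ref{P2.4}'' is meant to convey, so your approach is essentially the same as the paper's (implicit) one.
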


\subsection{Second order inner flow}
By the same procedure, we deduce from the $\varepsilon$-th order terms while $\eta\to +\infty$, that the second order inner flow $(\rho^2, {\bf{u}}^2, {\bf{H}}^2, p^2)$ satisfies the following linearized ideal MHD equations around $(\rho^0, {\bf{u}}^0, {\bf{H}}^0, p^0)$.
\begin{align}\label{2.10}
\begin{cases}
\partial_t\rho^2+{\bf{u}}^0\cdot\nabla\rho^2+{\bf{u}}^2\cdot\nabla\rho^0=-{\bf{u}}^1\cdot\nabla\rho^1,\\
\rho^0\left(\partial_t{\bf{u}}^2+{\bf{u}}^0\cdot\nabla{\bf{u}}^2\right)+\nabla p^2-{\bf{H}}^0\cdot\nabla {\bf{H}}^2+\rho^0{\bf{u}}^2\cdot\nabla{\bf{u}}^0\\
\quad+\rho^2\left(\partial_t{\bf{u}}^0+{\bf{u}}^0\cdot\nabla{\bf{u}}^0\right)-{\bf{H}}^2\cdot\nabla {\bf{H}}^0=-f_4,\\
\partial_t{\bf{H}}^2+{\bf{u}}^0\cdot\nabla {\bf{H}}^2-{\bf{H}}^0\cdot\nabla {\bf{u}}^2+{\bf{u}}^2\cdot\nabla {\bf{H}}^0-{\bf{H}}^2\cdot\nabla {\bf{u}}^0=-f_5,\\
\nabla\cdot{\bf{u}}^2=0,\qquad \nabla\cdot{\bf{H}}^2=0,
\end{cases}
\end{align}
where the source terms
\begin{align*}
f_4=\rho^1\left(\partial_t{\bf{u}}^1+{\bf{u}}^0\cdot\nabla{\bf{u}}^1+{\bf{u}}^1\cdot\nabla{\bf{u}}^0\right)
+\rho^0{\bf{u}}^1\cdot\nabla{\bf{u}}^1-{\bf{H}}^1\cdot\nabla{\bf{H}}^1-\mu\Delta{\bf{u}}^0
\end{align*}
and
\begin{align*}
f_5={\bf{u}}^1\cdot\nabla{\bf{H}}^1-{\bf{H}}^1\cdot\nabla{\bf{u}}^1-\kappa\Delta{\bf{H}}^0.
\end{align*}
The initial data is chosen to be zero
\begin{align}
\label{i2}
(\rho^2, {\bf{u}}^2, {\bf{H}}^2)|_{t=0}={\bf{0}}.
\end{align}
And the boundary condition is imposed as follows by \eqref{2.2}.
\begin{align}
\label{b2}
(v^2, g^2)(t, x, 0)=-(v_b^1, g_b^1)(t, x, 0).
\end{align}
Similar as Proposition \ref{E1}, we also have the well-posedness result for \eqref{2.10}.
\begin{proposition}
\label{P2.7}
Let $(\rho^0, {\bf{u}}^0, {\bf{H}}^0, p^0)$ and $(\rho^1, {\bf{u}}^1, {\bf{H}}^1, p^1)$ be the smooth solutions to \eqref{2.3} and \eqref{2.7} respectively. Then, there exists a unique solution $(\rho^2, {\bf{u}}^2, {\bf{H}}^2, p^2)$ to \eqref{2.10}-\eqref{b2} on $[0, T_5]$ where $T_5\in [0, T_4]$. Moreover,
\begin{align*}
(\rho^2, {\bf{u}}^2, {\bf{H}}^2, \nabla p^2)\in\cap_{j=0}^{[m/4]-4}C^j\left([0, T_5]; H^{[m/4]-4-j}(\mathbb{T}\times\mathbb{R}_+)\right).
\end{align*}
\end{proposition}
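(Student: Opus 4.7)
The plan is to treat \eqref{2.10}--\eqref{b2} as a linear inhomogeneous incompressible ideal MHD system for $(\rho^2,{\bf u}^2,{\bf H}^2,p^2)$ with smooth variable coefficients furnished by the leading-order inner flow $(\rho^0,{\bf u}^0,{\bf H}^0)$ of Proposition~\ref{E1} and explicit forcing terms built from the first-order quantities of Proposition~\ref{P2.5}. This is structurally the same problem as \eqref{2.7}, so I would follow the proof of Proposition~\ref{P2.5}, and the only new ingredients to verify are the regularity of the source terms and the inhomogeneous trace condition \eqref{b2}.

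For the source terms, each of $-{\bf u}^1\cdot\nabla\rho^1$, $-f_4$ and $-f_5$ is a polynomial in $(\rho^1,{\bf u}^1,{\bf H}^1)$ and their first derivatives, together with $\mu\Delta{\bf u}^0$ and $\kappa\Delta{\bf H}^0$. Losing one derivative for $\nabla{\bf u}^1$ etc.\ and invoking the Sobolev algebra property in two dimensions (valid once the regularity index is $\ge 2$, comfortably so when $m\ge 72$), the forcings lie in $\cap_{j}C^j([0,T_3];H^{[m/2]-3-j})$, well above the target regularity $H^{[m/4]-4}$; the Laplacians belong to $H^{m-2}$ by Proposition~\ref{E1}. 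The boundary datum $(v_b^1,g_b^1)(t,x,0)$ is smooth in $(t,x)$ by Proposition~\ref{P2.6}.

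To deal with \eqref{b2} I would subtract a smooth divergence-free lift $(\mathfrak U,\mathfrak H)$ of the trace, built with a $y$-cutoff; the required vanishing of the tangential mean of the traces follows from the divergence-free conditions on $({\bf u}^1,{\bf H}^1)$ together with decay of the first-order boundary layer at infinity. The new unknowns $(\tilde{\bf u}^2,\tilde{\bf H}^2):=({\bf u}^2,{\bf H}^2)-(\mathfrak U,\mathfrak H)$ satisfy the same linearized system with modified but equally regular forcing and the homogeneous condition $(\tilde v^2,\tilde g^2)|_{y=0}=0$. Existence can then be obtained by a vanishing viscosity/resistivity scheme: add $\delta\Delta$ to the momentum and induction equations with the mixed conditions of \eqref{1.3}, solve by a Galerkin method for each $\delta>0$, and pass to $\delta\to0$ using uniform energy estimates. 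In these estimates the principal cross terms $-{\bf H}^0\cdot\nabla\tilde{\bf H}^2$ and $-{\bf H}^0\cdot\nabla\tilde{\bf u}^2$ cancel after integration by parts thanks to $\nabla\cdot{\bf H}^0=0$ and $g^0|_{y=0}=0$, while transport commutators with ${\bf u}^0\cdot\nabla$ and linear couplings through $\nabla{\bf u}^0,\nabla{\bf H}^0$ are absorbed by Moser estimates and Gr\"onwall. The pressure $p^2$ is recovered from the elliptic Neumann problem $\nabla\cdot\bigl((\rho^0)^{-1}\nabla p^2\bigr)=\nabla\cdot\bigl((\rho^0)^{-1}(\textrm{RHS of momentum})\bigr)$ with Neumann data on $\{y=0\}$ read off the normal component of the momentum equation, well-posed because $\rho^0\ge\underline\rho/2$ by Proposition~\ref{E1}.

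The step I expect to be most delicate is the bookkeeping: the target regularity $[m/4]-4$ reflects a compounding derivative loss across Propositions~\ref{E1}, \ref{P2.5} and \ref{P2.6}, so one must carefully check that each product of first-order quantities in $f_4,f_5$ and each time derivative really stays in $H^{[m/4]-4}$, and that the Neumann data for $p^2$ is consistent with the evolving trace of $(\tilde v^2,\tilde g^2)$ so that $\nabla p^2$ lands in the same space as the velocity. Because $(\mathfrak U,\mathfrak H)$ is smooth and divergence-free, it contributes only known lower-order forcings and does not create new derivative losses, so once the bookkeeping is done the estimate closes on a time interval $T_5\in(0,T_4]$ depending only on the $H^m$ norm of the background flow and the smaller norms of the forcing.
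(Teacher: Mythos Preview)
Your proposal is correct and in fact supplies considerably more detail than the paper itself: the paper offers no explicit proof of Proposition~\ref{P2.7}, merely prefacing it with ``Similar as Proposition~\ref{E1}, we also have the well-posedness result for \eqref{2.10},'' thereby deferring to the standard well-posedness theory for (linearized) ideal incompressible MHD in \cite{S93}. Your outline---treating \eqref{2.10}--\eqref{b2} as a linear symmetric-hyperbolic system with smooth coefficients and forcing, lifting the inhomogeneous trace \eqref{b2}, and closing energy estimates with the $-{\bf H}^0\cdot\nabla$ cross-cancellation---is exactly how one would flesh out that reference, and your identification of the boundary datum from Proposition~\ref{P2.6} as the regularity bottleneck explaining the index $[m/4]-4$ is on point.
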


\subsection{Second order boundary layer}
Putting ansatz \eqref{2.1} into \eqref{1.1}, setting the $\varepsilon$-th order terms equal to zero and using \eqref{2.5},
we deduce that the second order boundary layer profile $(\rho_b^2, u_b^2, v_b^2, h_b^2, g_b^2)(t, x, \eta)$ is given by
\begin{align}\label{2.11}
\partial_t\rho_b^2+\left(\overline{u^0}+u_b^0\right)\partial_{x}\rho_b^2+\left(\mathcal{V}^1+v_b^0\right)\partial_\eta\rho_b^2
+\left(\overline{\partial_x\rho^0}+\partial_{x}\rho_b^0\right)u_b^2+\partial_\eta\rho_b^0v_b^2=-f_6
\end{align}
with the source term
\begin{align*}
f_6=&\mathcal{U}^2\partial_x\rho_b^0+\left(\mathcal{U}^1+u_b^1\right)\partial_x\rho_b^1
+u_b^1\partial_x\varrho^1+u_b^0\partial_x\varrho^2\\
&+\mathcal{V}^3\partial_\eta\rho_b^0+v_b^1\overline{\partial_y\rho^0}+\left(\mathcal{V}^2+v_b^1\right)\partial_\eta\rho_b^1
+v_b^0\partial_y\varrho^1,
\end{align*}
and
\begin{align}\label{2.12}
&\left(\overline{\rho^0}+\rho_b^0\right)\left(\partial_tu_b^2+(\overline{u^0}+u_b^0)\partial_xu_b^2+(\mathcal{V}^1+v_b^0)\partial_\eta u_b^2\right)\notag\\
&\quad+\left(\overline{\partial_tu^0}+\partial_tu_b^0+(\overline{u^0}+u_b^0)(\overline{\partial_xu^0}+\partial_xu_b^0)+\left(\mathcal{V}^1+v_b^0\right)\partial_{\eta}u_b^0\right)\rho_b^2\notag\\
&\quad+\left(\overline{\rho^0}+\rho_b^0\right)\left(\overline{\partial_xu^0}+\partial_xu_b^0\right)u_b^2+\left(\overline{\rho^0}+\rho_b^0\right)\partial_{\eta}u_b^0v_b^2\notag\\
&\quad-(\overline{h^0}+h_b^0)\partial_xh_b^2-(\mathcal{G}^1+g_b^0)\partial_\eta h_b^2
-\left(\overline{\partial_xh^0}+\partial_xh_b^0\right)h_b^2-\partial_\eta h_b^0g_b^2-\mu\partial_\eta^2u_b^2=-f_7
\end{align}
with
\begin{align*}
f_7=&\varrho^2\partial_tu_b^0+\left(\varrho^1+\rho_b^1\right)\partial_tu_b^1+\rho_b^1\partial_t\mathcal{U}^1+\rho_b^0\partial_t\mathcal{U}^2\\
&+\left[\varrho^2(\overline{u^0}+u_b^0)+(\overline{\rho^0}+\rho_b^0)\mathcal{U}^2+(\varrho^1+\rho_b^1)(\mathcal{U}^1+u_b^1)\right]\partial_xu_b^0\\
&+\left(\varrho^2u_b^0+\mathcal{U}^2\rho_b^0+\varrho^1u_b^1+\rho_b^1\mathcal{U}^1+\rho_b^1u_b^1\right)\overline{\partial_xu^0}\\
&+\left[(\varrho^1+\rho_b^1)(\overline{u^0}+u_b^0)+(\overline{\rho^0}+\rho_b^0)(\mathcal{U}^1+u_b^1)\right]\partial_xu_b^1\\
&+\left(\varrho^1u_b^0+\rho_b^1\overline{u^0}+\rho_b^1u_b^0+\overline{\rho^0}u_b^1+\rho_b^0\mathcal{U}^1+\rho_b^0u_b^1\right)\partial_x\mathcal{U}^1\\
&+\left(\overline{\rho^0}u_b^0+\rho_b^0\overline{u^0}+\rho_b^0u_b^0\right)\partial_x\mathcal{U}^2\\
&+\left[(\overline{\rho^0}+\rho_b^0)\mathcal{V}^3+(\varrho^1+\rho_b^1)(\mathcal{V}^2+v_b^1)+\varrho^2(\mathcal{V}^1+v_b^0)\right]\partial_\eta u_b^0\\
&+\left[(\overline{\rho^0}+\rho_b^0)(\mathcal{V}^2+v_b^1)+(\varrho^1+\rho_b^1)(\mathcal{V}^1+v_b^0)\right]\partial_\eta u_b^1\\
&+\left(\overline{\rho^0}v_b^1+\rho_b^0\mathcal{V}^2+\rho_b^0v_b^1+\varrho^1v_b^0+\rho_b^1\mathcal{V}^1+\rho_b^1v_b^0\right)\overline{\partial_yu^0}\\
&+\left(\overline{\rho^0}v_b^0+\rho_b^0\mathcal{V}^1+\rho_b^0v_b^0\right)\partial_y\mathcal{U}^1-\mathcal{U}^2\partial_xh_b^0
-\left(\mathcal{H}^1+h_b^1\right)\partial_xh_b^1
-h_b^1\partial_x\mathcal{H}^1\\
&-h_b^0\partial_x\mathcal{H}^2-\mathcal{G}^3\partial_\eta h_b^0-g_b^1\overline{\partial_yh^0}
-\left(\mathcal{G}^2+g_b^1\right)\partial_\eta h_b^1
-g_b^0\partial_y\mathcal{H}^1+\mu\partial_{xx}u_b^0,
\end{align*}
and
\begin{align}\label{2.13}
&\partial_th_b^2+(\overline{u^0}+u_b^0)\partial_xh_b^2+(\mathcal{V}^1+v_b^0)\partial_\eta h_b^2+\left(\overline{\partial_xh^0}+\partial_xh_b^0\right)u_b^2+\partial_\eta h_b^0v_b^2\notag\\
&\quad-(\overline{h^0}+h_b^0)\partial_xu_b^2-(\mathcal{G}^1+g_b^0)\partial_\eta u_b^2
-\left(\overline{\partial_xu^0}+\partial_xu_b^0\right)h_b^2-\partial_\eta u_b^0g_b^2
-\kappa\partial_\eta^2h_b^2=-f_8
\end{align}
with the source term
\begin{align*}
f_8=&\mathcal{U}^2\partial_xh_b^0+\left(\mathcal{U}^1+u_b^1\right)\partial_xh_b^1+u_b^1\partial_x\mathcal{H}^1+u_b^0\partial_x\mathcal{H}^2\\
&+\mathcal{V}^3\partial_\eta h_b^0+v_b^1\overline{\partial_yh^0}
+\left(\mathcal{V}^2+v_b^1\right)\partial_\eta h_b^1
+v_b^0\partial_y\mathcal{H}^1\\
&-\mathcal{H}^2\partial_xu_b^0
-\left(\mathcal{H}^1+h_b^1\right)\partial_xu_b^1
-h_b^1\partial_x\mathcal{U}^1
-h_b^0\partial_x\mathcal{U}^2\\
&-\mathcal{G}^3\partial_\eta u_b^0
-g_b^1\overline{\partial_yu^0}
-\left(\mathcal{G}^2+g_b^1\right)\partial_\eta u_b^1
-g_b^0\partial_y\mathcal{U}^1
+\kappa\partial_{xx}h_b^0.\\
\end{align*}
The divergence free conditions are satisfied.
\begin{align}
\label{div}
\partial_xu_b^2+\partial_\eta v_b^2=0,\qquad \partial_xh_b^2+\partial_\eta g_b^2=0.
\end{align}
The boundary condition and the initial data are stated as follows.
\begin{align}
\label{b_3}
(u_b^2, \partial_\eta h_b^2)|_{\eta=0}=-\left(\overline{u^2}(t, x), \overline{\partial_yh^1}(t, x)\right)
\end{align}
and
\begin{align}
\label{i3}
(u_b^2, h_b^2)|_{t=0}={\bf{0}}.
\end{align}
The representation of $p_b^2$ can be deduced from the $v$-equation similarly.
%The details are omitted here.
Then as Proposition \ref{P2.4}, we have the local well-posedness for \eqref{2.11}-\eqref{i3}.
\begin{proposition}
\label{P2.8}
Let $(\rho^0, {\bf{u}}^0, {\bf{H}}^0, p^0)$, $(\rho^1, {\bf{u}}^1, {\bf{H}}^1, p^1)$ and $(\rho^2, {\bf{u}}^2, {\bf{H}}^2, p^2)$ be the solutions to \eqref{2.3}, \eqref{2.7} and \eqref{2.10} respectively. Let $(\rho_b^0, u^0_b, v_b^0, h^0_b, g_b^0)$ and $(\rho_b^1, u^1_b, v_b^1, h^1_b, g_b^1)$ be the solutions to \eqref{2.4}-\eqref{2.6} and \eqref{2.8}-\eqref{2.9} respectively. Then, the system \eqref{2.11}-\eqref{i3} admits a unique solution $(\rho_b^2, u_b^2, h_b^2)$ on $[0, T_6]$ where $T_6\in [0, T_5]$. Moreover, for any $l>0$,
\begin{align*}
\begin{array}{ll}
(\rho_b^2, u_b^2, h_b^2)\in\cap_{i=0}^{[m/8]-5}W^{i, \infty}\left([0, T_6]; H_l^{[m/8]-5-i}(\mathbb{T}\times\mathbb{R}_+)\right),\\
(v_b^2, g_b^2)\in\cap_{i=0}^{[m/8]-6}W^{i, \infty}\left([0, T_6]; H_l^{[m/8]-6-i}(\mathbb{T}\times\mathbb{R}_+)\right).
\end{array}
\end{align*}
\end{proposition}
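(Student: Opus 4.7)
The plan is to treat \eqref{2.11}--\eqref{i3} as a \emph{linear} inhomogeneous initial-boundary value problem for $(\rho_b^2, u_b^2, h_b^2, v_b^2, g_b^2)$ whose principal part is identical to that of \eqref{2.4}--\eqref{2.6} and \eqref{2.8}. Indeed, the convective pair $(\overline{u^0}+u_b^0, \mathcal{V}^1+v_b^0) = (u^p, v^p)$ and the magnetic pair $(\overline{h^0}+h_b^0, \mathcal{G}^1+g_b^0) = (h^p, g^p)$ coincide with the zeroth-order layer already constructed in Proposition~\ref{P2.4}, and crucially $h^p \geq \sigma/2 > 0$ on $[0,T_2]$. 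Hence the non-degeneracy condition that powers the cancellation mechanism of \cite{LXY19} and the well-posedness framework of \cite{GHY21} remains in force, and the existence theory there applies essentially verbatim once the coefficients and source terms are verified to live in the appropriate weighted Sobolev spaces.

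The first substantive step is to check that $f_6, f_7, f_8$ belong to $\cap_{i=0}^{[m/8]-5} W^{i,\infty}([0,T_5]; H_l^{[m/8]-5-i}(\mathbb{T}\times\mathbb{R}_+))$ for every $l>0$. Each source term is a polynomial expression in $(\rho^j, u^j, h^j, \mathcal{U}^j, \mathcal{V}^j, \mathcal{H}^j, \mathcal{G}^j)$ and $(\rho_b^j, u_b^j, v_b^j, h_b^j, g_b^j)$ for $j=0,1$, together with one derivative of each. Combining Propositions~\ref{E1}, \ref{P2.4}, \ref{P2.5}, \ref{P2.6} with the product-rule bounds in $H_l^k$ and the decay in $\eta$ of the boundary-layer factors, one loses roughly one further derivative and halves the regularity index relative to Proposition~\ref{P2.6}, which accounts for the exponent $[m/8]-5$.

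With the source terms controlled, I would construct the solution by a Picard iteration (or the artificial-viscosity scheme used in \cite{GHY21}) on the coupled linear system in $(\rho_b^2, u_b^2, h_b^2)$, recovering $v_b^2$ and $g_b^2$ at each iterate by integrating the divergence-free relations \eqref{div} against the boundary condition $(v_b^2,g_b^2)|_{\eta=0}=0$; this integration step is exactly what costs one derivative and one unit of weight, as reflected in the stated regularity for $(v_b^2,g_b^2)$. The density equation \eqref{2.11} is a linear transport equation along the characteristics of $(u^p,v^p)$ forced by a datum that is under control once $(u_b^2, v_b^2)$ are, while its feedback into the momentum equation \eqref{2.12} enters only through the zeroth-order coefficient multiplying $\rho_b^2$, which is handled by standard Gr\"onwall arguments. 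Uniform weighted energy estimates for the $(u_b^2, h_b^2)$ subsystem then follow from the same cancellation-plus-Hardy scheme as in \cite{GHY21}: one combines the momentum and magnetic equations with the weight $\langle\eta\rangle^{l+k}$ to eliminate the apparently dangerous stretching terms $h^p\partial_x h_b^2$ and $h^p\partial_x u_b^2$ at top order.

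The main obstacle I foresee is simply the bookkeeping of derivative losses through the hierarchy: one must verify that the weighted $H_l^{[m/8]-5}$-norms of $f_6,f_7,f_8$ stay finite uniformly on $[0,T_5]$, which requires repeated application of the tame product estimates in weighted spaces to the long explicit formulas for $f_6, f_7, f_8$. Once this is in place, the pressure layer $p_b^2$ is recovered by integrating the $\varepsilon^{3/2}$-order equation in the $v$-component from $\eta$ to $+\infty$ in complete analogy with \eqref{2.9}, and $T_6 \in [0, T_5]$ is taken as the common existence time produced by the iteration.
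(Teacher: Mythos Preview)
Your proposal is correct and takes essentially the same approach as the paper. In fact, the paper provides no detailed proof of Proposition~\ref{P2.8} at all, merely prefacing it with ``Then as Proposition~\ref{P2.4}, we have the local well-posedness for \eqref{2.11}--\eqref{i3}''; your write-up supplies precisely the missing verification that the linearized principal part, the non-degeneracy $h^p\ge\sigma/2$, and the weighted regularity of the sources $f_6,f_7,f_8$ allow the machinery of \cite{GHY21} to be invoked, together with the correct accounting for the derivative losses that produce the exponent $[m/8]-5$.
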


\subsection{The approximate solution}
Based on the construction above, we define the approximate solution $(\rho^a, {\bf{u}}^a, {\bf{H}}^a, p^a)$ as follows, which is a little different from \eqref{2.1}.
\begin{align}\label{2.14}
\begin{cases}
\rho^a(t, x, y)=\rho^0(t, x, y)+\rho_b^0\left(t, x, \frac{y}{\sqrt{\varepsilon}}\right)+\sqrt{\varepsilon}\left[\rho^1(t, x, y)+\rho_b^1\left(t, x, \frac{y}{\sqrt{\varepsilon}}\right)\right]\\
\qquad\qquad\qquad\qquad\qquad+\varepsilon \left[\rho^2(t, x, y)+\rho_b^2\left(t, x, \frac{y}{\sqrt{\varepsilon}}\right)\right],\\
(u^a, h^a)(t, x, y)=(u^0, h^0)(t, x, y)+(u_b^0, h_b^0)\left(t, x, \frac{y}{\sqrt{\varepsilon}}\right)\\
\qquad\qquad\qquad\quad+\sqrt{\varepsilon}\left[(u^1, h^1)(t, x, y)+(u_b^1, h_b^1)\left(t, x, \frac{y}{\sqrt{\varepsilon}}\right)\right]\\
\qquad\qquad\qquad\quad+\varepsilon\left[(u^2, h^2)(t, x, y)+\chi (y)(u_b^2, h_b^2)\left(t, x, \frac{y}{\sqrt{\varepsilon}}\right)\right]\\
\qquad\qquad\qquad\quad+\varepsilon^{\frac32}\left(\chi'(y)\int_0^{\frac{y}{\sqrt{\varepsilon}}} u_b^2(t, x, \tilde{\eta})d\tilde{\eta}, \chi'(y)\int_0^{\frac{y}{\sqrt{\varepsilon}}} h_b^2(t, x, \tilde{\eta})d\tilde{\eta}+\phi(t, x, \frac{y}{\sqrt{\varepsilon}})\right),\\
(v^a, g^a)(t, x, y)=(v^0, g^0)(t, x, y)+\sqrt{\varepsilon}\left[(v_b^0, g_b^0)\left(t, x, \frac{y}{\sqrt{\varepsilon}}\right)+(v^1, g^1)(t, x, y)\right]\\
\qquad\qquad\qquad\qquad\quad+\varepsilon\left[(v_b^1, g_b^1)\left(t, x, \frac{y}{\sqrt{\varepsilon}}\right)+(v^2, g^2)(t, x, y)\right]\\
\qquad\qquad\qquad\qquad\quad+\varepsilon^{\frac32}\left(\chi (y)v_b^2, \chi (y)g_b^2-\sqrt{\varepsilon}\int_0^{\frac{y}{\sqrt{\varepsilon}}} \partial_x\phi(t, x, \tilde{\eta})d\tilde{\eta}\right)\left(t, x, \frac{y}{\sqrt{\varepsilon}}\right),\\
p^a(t, x, y)=p^0(t, x, y)+\sqrt{\varepsilon}p^1(t, x, y)+\varepsilon\left[ p_b^1\left(t, x, \frac{y}{\sqrt{\varepsilon}}\right)+p^2(t, x, y)\right]+\varepsilon^{\frac32}  p_b^2\left(t, x, \frac{y}{\sqrt{\varepsilon}}\right).
\end{cases}
\end{align}
Here $\chi(y)$ is a smooth cut-off function which satisfies
\begin{align}\label{2.15}
\chi(y)=
\begin{cases}
1, \quad y\in [0, 1],\\
0, \quad y\in [2, +\infty),
\end{cases}
\end{align}
it ensures that $(v^a, g^a)$ decays fast as $y\to +\infty$.
The boundary corrector $\phi(t, x, \eta)$ is chosen to cancel the value of $\partial_yh^2$ on the boundary $y=0$, and it satisfies
\begin{align}\label{2.16}
\partial_\eta\phi(t, x, 0)=-\partial_yh^2(t, x, 0).
\end{align}
Then, it is direct to check that the approximate solution $(\rho^a, {\bf{u}}^a, {\bf{H}}^a, p^a)$ defined above solves
\begin{align}\label{2.17}
\begin{cases}
\partial_t\rho^a+{\bf{u}}^a\cdot\nabla\rho^a=R_0,\\
\rho^a\left(\partial_t{\bf{u}}^a+{\bf{u}}^a\cdot\nabla{\bf{u}}^a\right)+\nabla p^a-{\bf{H}}^a\cdot\nabla {\bf{H}}^a-\mu\varepsilon\Delta {\bf{u}}^a=R_{{\bf{u}}},\\
\partial_t{\bf{H}}^a+{\bf{u}}^a\cdot\nabla {\bf{H}}^a-{\bf{H}}^a\cdot\nabla {\bf{u}}^a-\kappa\varepsilon\Delta{\bf{H}}^a=R_{{\bf{H}}},\\
\nabla\cdot{\bf{u}}^a=0,\qquad \nabla\cdot{\bf{H}}^a=0,\\
(\rho^a, {\bf{u}}^a, {\bf{H}}^a)|_{t=0}=(\rho_0, {\bf{u}}_0, {\bf{H}}_0)(x, y),\\
(u^a, v^a, \partial_yh^a, g^a)|_{y=0}={\bf{0}},
\end{cases}
\end{align}
where the remainders $R_{{\bf{u}}}=(R_1, R_2)$, $R_{{\bf{H}}}=(R_3, R_4)$. For these remainders, we have the following propositions.
\begin{proposition}\label{remainder}
For any $\alpha\in\mathbb{N}^2$ and non-negative integer $k\le 1$, the remainders in \eqref{2.17} satisfy the following estimates.
\begin{align}\label{2.18}
\|\partial_{tx}^\alpha\partial_y^k R_i(t, \cdot)\|_{L^2}+\|(y\partial_y)\partial_y^k R_i(t, \cdot)\|_{L^2}\le C_i\varepsilon^{\frac{3-k}2},\quad i=0, 1, 2, 3, 4
\end{align}
hold for some positive constants $C_i\ (i=0, 1, 2, 3, 4)$ which are independent of $\varepsilon$.
\end{proposition}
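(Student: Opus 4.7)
The plan is to substitute the ansatz \eqref{2.14} into system \eqref{1.1}, expand formally in powers of $\sqrt{\varepsilon}$, and use the fact that the profile equations derived in Sections 2.1--2.6 are designed precisely to annihilate every contribution of order $1$, $\sqrt{\varepsilon}$, and $\varepsilon$. At order $1$ the cancellation comes from \eqref{2.3} paired with \eqref{2.4}--\eqref{2.6} together with the matching conditions \eqref{2.2}; at order $\sqrt{\varepsilon}$ from \eqref{2.7}, \eqref{2.8}, and the definition \eqref{2.9} of $p_b^1$ (which is chosen so as to kill the $v$-component of momentum at this order); at order $\varepsilon$ from \eqref{2.10}, \eqref{2.11}--\eqref{2.13}, the analogous formula for $p_b^2$, and the boundary corrector $\phi$ specified by \eqref{2.16}. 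Once these cancellations are recorded, the remainders $R_0,R_{\mathbf{u}},R_{\mathbf{H}}$ reduce to a finite sum of terms falling into three types.

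The three types are: (a) genuinely higher-order self-interactions such as $\varepsilon^{3/2}\,\mathbf{u}^1\!\cdot\!\nabla\rho^2$ or $\varepsilon^2\,\mathbf{H}^2\!\cdot\!\nabla\mathbf{H}^2$, entirely expressible in terms of the inner and boundary layer profiles already constructed; (b) Taylor remainders arising because the boundary layer equations only use the leading Taylor coefficients of the Euler quantities on $\{y=0\}$, so that for any smooth Euler-type function $f$,
\begin{align*}
f(t,x,\sqrt{\varepsilon}\eta)-\overline{f}-\sqrt{\varepsilon}\eta\,\overline{\partial_y f}-\tfrac12\varepsilon\eta^2\,\overline{\partial_y^2 f}=\varepsilon^{3/2}\eta^3\int_0^1\tfrac{(1-s)^2}{2}\partial_y^3 f(t,x,s\sqrt{\varepsilon}\eta)\,ds,
\end{align*}
which, multiplied by a decaying boundary layer profile, yields a contribution of size $\varepsilon^{3/2}$ or smaller; and (c) cutoff and corrector errors arising from $\chi(y),\chi'(y)$, from the stream-like correctors $\varepsilon^{3/2}\chi'(y)\int_0^{y/\sqrt{\varepsilon}}(u_b^2,h_b^2)\,d\tilde\eta$ in \eqref{2.14}, and from $\phi$. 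For class (c) the factor $\chi'$ is supported on $y\in[1,2]$, where any boundary layer profile evaluated at $y/\sqrt{\varepsilon}$ is $O(\varepsilon^N)$ for every $N$ by its polynomial decay in $\eta$ guaranteed by the weighted spaces $H_l^k$, so these contributions are in fact much smaller than required.

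Each term is then estimated in $L^2(\mathbb{T}\times\mathbb{R}_+)$ by the change of variable $\eta=y/\sqrt{\varepsilon}$: one factor of a boundary layer profile in $L^2_y$ yields $\varepsilon^{1/4}$, while every $\partial_y$ landing on a boundary layer profile costs $\varepsilon^{-1/2}$. Tangential derivatives $\partial_{tx}^\alpha$ commute with the fast variable, and the required regularity together with the polynomial weight $\langle\eta\rangle^{l+m_2}$ that absorbs the $\eta^3$ factor in class (b) is supplied by Propositions \ref{E1}, \ref{P2.4}, \ref{P2.5}, \ref{P2.6}, \ref{P2.7}, and \ref{P2.8}. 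For the conormal norm $\|(y\partial_y)\partial_y^k R_i\|_{L^2}$ the key identity
\begin{align*}
y\partial_y\bigl[f(t,x,y/\sqrt{\varepsilon})\bigr]=\eta\,(\partial_\eta f)(t,x,y/\sqrt{\varepsilon})
\end{align*}
shows that $y\partial_y$ only adds one $\eta$-weight to a boundary layer profile (absorbed by $H_l^k$) rather than costing $\varepsilon^{-1/2}$; combined with the calculus above this yields $\|\partial_{tx}^\alpha R_i\|_{L^2}+\|(y\partial_y)R_i\|_{L^2}\lesssim\varepsilon^{3/2}$ in the $k=0$ case, and the single additional factor $\varepsilon^{-1/2}$ from one normal derivative gives the $\varepsilon$ bound in the $k=1$ case, matching \eqref{2.18}.

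The main obstacle is not any individual estimate but sheer bookkeeping: each source term $f_1,\ldots,f_8$ contains a dozen nonlinear cross terms, and one must tick off term by term that the profile equations of Sections 2.1--2.6 exhaust all contributions up to order $\varepsilon$. The most delicate algebraic point is that the cutoff $\chi$ and the antiderivative correctors in \eqref{2.14} have been arranged so that $\nabla\cdot\mathbf{u}^a=\nabla\cdot\mathbf{H}^a=0$ remains an exact identity while introducing only class-(c) errors, and that $p_b^1,p_b^2$ really do close the $v$-component of momentum at their respective orders. Once these algebraic checks are complete, the estimate \eqref{2.18} is immediate from the scaling calculus summarized above.
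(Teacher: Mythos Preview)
Your proposal is correct and follows essentially the same strategy as the paper: both arguments classify the remainder into Taylor--remainder terms (your class (b)), higher--order self--interactions (your class (a)), and cutoff/corrector errors (your class (c)), then estimate each via the change of variable $\eta=y/\sqrt{\varepsilon}$ together with the weighted regularity of the boundary layer profiles from Propositions \ref{P2.4}, \ref{P2.6}, \ref{P2.8}. The paper carries this out by writing out the full explicit expressions of $R_0,\ldots,R_4$ term by term (in Appendix B) and then picking representative terms to estimate, whereas you summarize the structure abstractly; the underlying mechanism and scaling calculus are identical.
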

\begin{remark}
The loss of $\varepsilon^{\frac12}$ happens when the normal derivative operator $\partial_y$ is applied on the boundary layer profile due to the fast variable $\eta=\frac{y}{\sqrt{\varepsilon}}$.
%Since we only concentrate on the operator $y\partial_y$ applying to the boundary layer profile, that is, near the boundary. We restrict $y\in [0, 2]$ in this operator so that it will not cause any impact on the Euler flows.
\end{remark}
The expressions of the remainders and the proof of Proposition \ref{remainder} will be given in details in Appendix B.

\section{Estimates of the error functions}
Let $(\rho^\varepsilon, {\bf{u}}^\varepsilon, {\bf{H}}^\varepsilon, p^\varepsilon)$ be a solution to \eqref{1.1}-\eqref{1.3}, and the approximate solution $(\rho^a, {\bf{u}}^a, {\bf{H}}^a, p^a)$ is constructed in the above section. Set
\begin{align}\label{3.1}
(\rho^\varepsilon, {\bf{u}}^\varepsilon, {\bf{H}}^\varepsilon, p^\varepsilon)(t, x, y)=(\rho^a, {\bf{u}}^a, {\bf{H}}^a, p^a)(t, x, y)+\varepsilon^{\frac32}(\rho, {\bf{u}}, {\bf{H}}, p)(t, x, y)
\end{align}
with ${\bf{u}}=(u, v)$ and ${\bf{H}}=(h, g)$. From \eqref{1.1} and \eqref{2.17}, we deduce that the error function $(\rho, {\bf{u}}, {\bf{H}}, p)$ satisfies the following initial boundary value problem.
\begin{align}\label{3.2}
\begin{cases}
\partial_t\rho+{\bf{u}}^\varepsilon\cdot\nabla\rho+u\partial_x\rho^a+v\partial_y\rho^a=-r_0,\\
\rho^\varepsilon\left(\partial_t{\bf{u}}+{\bf{u}}^\varepsilon\cdot\nabla{\bf{u}}\right)+\nabla p-{\bf{H}}^\varepsilon\cdot\nabla{\bf{H}}-\mu\varepsilon\Delta {\bf{u}}\\
\qquad+\rho\left(\partial_t{\bf{u}}^a+{\bf{u}}^a\cdot\nabla{\bf{u}}^a\right)
+\rho^\varepsilon{\bf{u}}\cdot\nabla{\bf{u}}^a-{\bf{H}}\cdot\nabla{\bf{H}}^a=-r_{{\bf{u}}},\\
\partial_t{\bf{H}}+{\bf{u}}^\varepsilon\cdot\nabla {\bf{H}}-{\bf{H}}^\varepsilon\cdot\nabla {\bf{u}}-\kappa\varepsilon\Delta{\bf{H}}
+{\bf{u}}\cdot\nabla{\bf{H}}^a-{\bf{H}}\cdot\nabla{\bf{u}}^a=-r_{{\bf{H}}},\\
\nabla\cdot{\bf{u}}=0,\qquad \nabla\cdot{\bf{H}}=0,\\
(\rho, {\bf{u}}, {\bf{H}})|_{t=0}={\bf{0}},\quad (u, v, \partial_yh, g)|_{y=0}={\bf{0}},
\end{cases}
\end{align}
where $r_{{\bf{u}}}=(r_1, r_2)$, $r_{{\bf{H}}}=(r_3, r_4)$ and $r_i=\varepsilon^{-\frac32}R_i\ (i=0, 1, 2, 3, 4)$. Furthermore, we derive from Proposition \ref{remainder} that for $k=0, 1$,
\begin{align}\label{3.3}
\|\partial_{tx}^{\alpha}\partial_y^kr_i(t, \cdot)\|_{L^2}+\|(y\partial_y)\partial_y^k r_i(t, \cdot)\|_{L^2}\le C_i\varepsilon^{-\frac k2},\quad |\alpha|\le2, \; i=0, 1, 2, 3, 4.
\end{align}

\subsection{A key transformation} The main difficulty in deriving uniform energy estimates of solutions to the initial boundary value problem \eqref{3.2} comes from the terms $v\partial_y\rho^a$, $v\partial_yu^a-g\partial_yh^a$ and $v\partial_yh^a-g\partial_yu^a$ which behave like $O(\varepsilon^{-\frac12})(v, g)$ due to the fast variable $\eta=\frac{y}{\sqrt{\varepsilon}}$. It is noted that there exists a strong boundary layer of density. To overcome this issue, we first learn from the divergence free condition of $\nabla\cdot{\bf{H}}=0$, there exists a stream function $\psi(t, x, y)$, such that
\begin{align}\label{3.4}
h=\partial_y\psi,\quad g=-\partial_x\psi,\quad \psi|_{y=0}=0,\quad \psi|_{t=0}=0,
\end{align}
and it satisfies
\begin{align}\label{3.5}
\partial_t\psi+(u^\varepsilon\partial_x+v^\varepsilon\partial_y)\psi-g^au+h^av-\kappa\varepsilon\Delta\psi=\partial_y^{-1}r_4=:r_5.
\end{align}
Then we introduce the following transformation
\begin{align}\label{3.6}
&\tilde{u}(t, x, y):=u(t, x, y)-\partial_y(a^p\cdot\psi)(t, x, y),\qquad \tilde{v}(t, x, y):=v(t, x, y)+\partial_x(a^p\cdot\psi)(t, x, y),\notag\\
&\tilde{h}(t, x, y):=h(t, x, y)-(b^p\cdot\psi)(t, x, y), \qquad \tilde{g}(t, x, y):=g(t, x, y),\notag\\
&\tilde{\rho}(t, x, y):=\rho(t, x, y)-(c^p\cdot\psi)(t, x, y),
\end{align}
where
\begin{align}\label{3.7}
a^p(t, x, y):=\zeta(y)\frac{u^p\left(t, x, \frac{y}{\sqrt{\varepsilon}}\right)}{h^p\left(t, x, \frac{y}{\sqrt{\varepsilon}}\right)},
\quad b^p(t, x, y):=\frac{\partial_yh^p\left(t, x, \frac{y}{\sqrt{\varepsilon}}\right)}{h^p\left(t, x, \frac{y}{\sqrt{\varepsilon}}\right)},
\quad c^p(t, x, y):=\frac{\partial_y\rho^p\left(t, x, \frac{y}{\sqrt{\varepsilon}}\right)}{h^p\left(t, x, \frac{y}{\sqrt{\varepsilon}}\right)},
\end{align}
here the cut-off function $\zeta(y)\in C^\infty(\mathbb{R}_+), 0\le\zeta(y)\le 1$, and
\begin{align*}
\zeta(y)=
\begin{cases}
1,\quad 0\le y\le1,\\
0,\quad y\ge2.
\end{cases}
\end{align*}
It is noticed that the initial data and the boundary conditions are unchanged under this transformation \eqref{3.6}.

\subsection{The equations of new error functions}
By introducing the new unknowns \eqref{3.6}, the singular terms in \eqref{3.2} will be cancelled. Consequently, we will consider the initial boundary value problem of the new unknowns \eqref{3.6} instead of \eqref{3.2}. For this purpose,
% {\color{red}To solve the system, we also need to symmetrize it.}
denote
\begin{align}\label{3.8}
{\bf{U}}(t, x, y):=(\tilde{u}, \tilde{v}, \tilde{h}, \tilde{g})^T(t, x, y),
\end{align}
then the system \eqref{3.2} can be rewritten as follows.
\begin{align}\label{3.9}
\begin{cases}
\partial_t\tilde{\rho}+{\bf{u}}^\varepsilon\cdot\nabla\tilde{\rho}+c^p\kappa\varepsilon(\partial_y\tilde{h}-\partial_x\tilde{g})+{\bf{C}}^0
={\bf{E}}^0,\\
{\bf{S}}^\varepsilon\partial_t{\bf{U}}+{\bf{A}}_1^\varepsilon\partial_x{\bf{U}}+{\bf{A}}_2^\varepsilon\partial_y{\bf{U}}+{\bf{C}}^\varepsilon+(p_x, p_y, 0, 0)^T-\varepsilon{\bf{B}}^\varepsilon\Delta {\bf{U}}={\bf{E}}^\varepsilon,\\
(\tilde{\rho}, {\bf{U}})|_{t=0}={\bf{0}},\qquad (\tilde{u}, \tilde{v}, \partial_y\tilde{h}, \tilde{g})|_{y=0}={\bf{0}},
\end{cases}
\end{align}
where ${\bf{S}}^\varepsilon, {\bf{A}}_1^\varepsilon, {\bf{A}}_2^\varepsilon$ and ${\bf{B}}^\varepsilon$ are matrices and ${\bf{C}}^0, {\bf{E}}^0, {\bf{C}}^\varepsilon, {\bf{E}}^\varepsilon$ are vectors. And ${\bf{S}}^\varepsilon$ is a diagonal matrix and has the form
\begin{align}\label{3.10}
{\bf{S}}^\varepsilon=diag(\rho^\varepsilon, \rho^\varepsilon, 1, 1),
\end{align}
and ${\bf{A}}_i^\varepsilon\ (i=1, 2)$ can be separated as follows:
\begin{align}\label{3.11}
{\bf{A}}_i^\varepsilon={\bf{A}}_i+\sqrt{\varepsilon}{\bf{A}}_i^p,\qquad i=1, 2,
\end{align}
where
\begin{align*}
{\bf{A}}_1=\left(
\begin{array}{cc}
\rho^\varepsilon(u^\varepsilon+a^ph^\varepsilon) {\bf{I}}_{2\times 2}& [\rho^\varepsilon(a^p)^2-1]h^\varepsilon{\bf{I}}_{2\times2}\\
-h^\varepsilon{\bf{I}}_{2\times2}& (u^\varepsilon-a^ph^\varepsilon){\bf{I}}_{2\times 2}
\end{array}
\right),
\end{align*}
\begin{align*}
{\bf{A}}_2=\left(
\begin{array}{cc}
\rho^\varepsilon(v^\varepsilon+a^pg^\varepsilon) {\bf{I}}_{2\times 2}& [\rho^\varepsilon(a^p)^2-1]g^\varepsilon{\bf{I}}_{2\times2}\\
-g^\varepsilon{\bf{I}}_{2\times2}& (v^\varepsilon-a^pg^\varepsilon){\bf{I}}_{2\times 2}
\end{array}
\right)
\end{align*}
and
\begin{align*}
{\bf{A}}_1^p=\sqrt{\varepsilon}\left(
\begin{array}{cc}
{\bf{0}}_{2\times 2}& \begin{array}{cc}
-2\mu\partial_xa^p& (\mu-\rho^\varepsilon\kappa)(\partial_ya^p+a^pb^p)\\
0 & (\rho^\varepsilon\kappa-3\mu)\partial_xa^p
\end{array}\\
{\bf{0}}_{2\times2}& {\bf{0}}_{2\times 2}
\end{array}
\right),
\end{align*}
\begin{align*}
{\bf{A}}_2^p=\sqrt{\varepsilon}\left(
\begin{array}{cc}
{\bf{0}}_{2\times 2}& \begin{array}{cc}
(\rho^\varepsilon\kappa-3\mu)\partial_ya^p+(\rho^\varepsilon\kappa-\mu)a^pb^p& 0\\
(\mu-\rho^\varepsilon\kappa)\partial_xa^p & -2\mu\partial_ya^p
\end{array}\\
{\bf{0}}_{2\times2}& {\bf{0}}_{2\times 2}
\end{array}
\right)
\end{align*}
%and
%\begin{align*}
%{\bf{A}}_1=\left(
%\begin{array}{cc}
%\rho^\varepsilon(u+a^ph) {\bf{I}}_{2\times 2}& [\rho^\varepsilon(a^p)^2-1]h{\bf{I}}_{2\times2}\\
%-h{\bf{I}}_{2\times2}& (u-a^ph){\bf{I}}_{2\times 2}
%\end{array}
%\right),
%\end{align*}
%\begin{align*}
%{\bf{A}}_2=\left(
%\begin{array}{cc}
%\rho^\varepsilon(v+a^pg) {\bf{I}}_{2\times 2}& [\rho^\varepsilon(a^p)^2-1]g{\bf{I}}_{2\times2}\\
%-g{\bf{I}}_{2\times2}& (v-a^pg){\bf{I}}_{2\times 2}
%\end{array}
%\right)
%\end{align*}
with ${\bf{I}}_{2\times 2}$ being a $2\times 2$ identity matrix and ${\bf{0}}_{2\times 2}$ be a $2\times 2$ zero matrix. The matrix ${\bf{B}}^\varepsilon$ and vectors ${\bf{E}}^0$, ${\bf{E}}^\varepsilon$ are given by
\begin{align}\label{3.12}
{\bf{B}}^\varepsilon=\left(
\begin{array}{cc}
\mu {\bf{I}}_{2\times 2}& (\mu-\rho^\varepsilon\kappa)a^p{\bf{I}}_{2\times2}\\
{\bf{0}}_{2\times2}& \kappa{\bf{I}}_{2\times 2}
\end{array}
\right),
\end{align}
\begin{align}\label{3.13}
({\bf{E}}^0, {\bf{E}}^\varepsilon)=\left(-r_0-c^pr_5, -r_1-\rho^\varepsilon\partial_y(a^pr_5), -r_2+\rho^\varepsilon\partial_x(a^pr_5), -r_3-b^pr_5, -r_4\right)^T.
\end{align}
The expression of vectors ${\bf{C}}$ and ${\bf{C}}^\varepsilon$ will be given in Appendix C. With this transformation, the following lemma shows that the original error function $(\rho, {\bf{u}}, {\bf{H}})$ in fact can be controlled by the new unknown $(\tilde{\rho}, {\bf{U}})$ in the following sense.
\begin{lemma}
\label{L3.1}
There exists a constant $C$ which is independent of $\varepsilon$, such that, for any $|\alpha|\le2$,
\begin{align}
&\|\partial_{x}\rho(t, \cdot)\|_{L^2}+\|\partial_{tx}^{\alpha}({\bf{u}}, {\bf{H}})(t, \cdot)\|_{L^2}
\le C\left(\|\partial_{x}\tilde{\rho}(t, \cdot)\|_{L^2}+\sum\limits_{\beta\le\alpha}\|\partial_{tx}^{\beta}({\bf{U}})(t, \cdot)\|_{L^2}\right),\notag\\
&\varepsilon^{\frac12}\|\partial_y({\bf{u}}, {\bf{H}})\|_{L^2}\le C\left(\varepsilon^{\frac12}\|\partial_y{\bf{U}}\|_{L^2}+\|{\bf{U}}\|_{L^2}\right).
\end{align}
\end{lemma}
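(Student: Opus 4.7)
The plan is to invert the algebraic transformation \eqref{3.6} and bound the resulting $\psi$-dependent correction terms by the $\mathbf{U}$-norms appearing on the right-hand side. The key structural observation is that although the profile coefficients $a^p,b^p,c^p$ (as well as any number of $t,x$-derivatives of them) are of size $O(\varepsilon^{-1/2})$ because of the fast variable $\eta=y/\sqrt{\varepsilon}$, the weighted objects $y\,a^p$, $y\,b^p$, $y\,c^p$, and $y\,\partial_{tx}^\beta(a^p,b^p,c^p)$ all lie in $L^\infty$ uniformly in $\varepsilon$: the weight $y=\sqrt{\varepsilon}\,\eta$ converts the singular prefactor into a bounded $\eta$-derivative evaluated at the boundary-layer scale. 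Together with the lower bound $h^p\ge\sigma/2$, this is the mechanism that makes the transformation invertible in an $\varepsilon$-uniform way.

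The first step is to control $\psi$ by $\tilde{h}$. The identity $\tilde{h}=\partial_y\psi-b^p\psi$ together with $\psi|_{y=0}=0$ is a first-order ODE in $y$ whose integrating factor equals $h^p(0)/h^p(y/\sqrt{\varepsilon})$ after explicit integration (since $b^p=\partial_y h^p/h^p$), and is therefore uniformly bounded above and below. This yields the pointwise bound $|\psi(t,x,y)|\le C\int_0^y|\tilde{h}(t,x,s)|\,ds$, and the Hardy inequality for the averaging operator gives $\|\psi/y\|_{L^2}\le C\|\tilde{h}\|_{L^2}$. Differentiating the ODE in $t$ and $x$, exactly the same argument yields $\|\partial_{tx}^\beta\psi/y\|_{L^2}\le C\sum_{\gamma\le\beta}\|\partial_{tx}^\gamma\tilde{h}\|_{L^2}$.

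With these two ingredients in hand, the first inequality is derived from the explicit identities
\[
h=\tilde{h}+b^p\psi,\qquad u=\tilde{u}+(\partial_y a^p)\psi+a^p h,\qquad v=\tilde{v}-(\partial_x a^p)\psi+a^p\tilde{g},\qquad g=\tilde{g},
\]
and $\partial_x\rho=\partial_x\tilde{\rho}+(\partial_x c^p)\psi-c^p\tilde{g}$. Each term containing $\psi$ is of the form (weighted $L^\infty$-bounded profile coefficient)$\cdot(\psi/y)$ and is therefore estimated by $C\|\tilde{h}\|_{L^2}$; the term $(\partial_x a^p)\psi$ is handled by the cut-off $\zeta(y)$ that restricts the support to $y\in[0,2]$, on which $\|\psi\|_{L^2}\le 2\|\psi/y\|_{L^2}$. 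The expected main obstacle is the single remaining term $c^p\tilde{g}$ (and, after commuting tangential derivatives, the analogous $b^p\partial_{tx}^\beta\tilde{g}$ terms): here $c^p$ is genuinely of order $\varepsilon^{-1/2}$ and $\tilde{g}$ is not a tangential-only quantity. The plan is to exploit both the boundary condition $\tilde{g}|_{y=0}=0$ and the divergence-free identity $\partial_y\tilde{g}=-\partial_x\tilde{h}-\partial_x(b^p\psi)$, which lets one convert the Hardy bound $\|\tilde{g}/y\|_{L^2}\lesssim\|\partial_y\tilde{g}\|_{L^2}$ back into tangential quantities modulo a self-term $b^p\tilde{g}$ that is absorbed using the thin $O(\sqrt{\varepsilon})$ support of $b^p$. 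Higher tangential derivatives $|\alpha|\le 2$ then follow by commuting $\partial^\alpha_{tx}$ through the above identities, which produces only lower-order tangential corrections handled by induction on $|\alpha|$.

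For the second inequality the normal derivative is allowed, but comes with the compensating prefactor $\sqrt{\varepsilon}$. Differentiating the inversion formulas in $y$ produces new factors $\partial_y a^p,\partial_y b^p,\partial_y c^p$, each of which is of order $\varepsilon^{-1}$; however, $\sqrt{\varepsilon}\,y\,\partial_y(a^p,b^p,c^p)$ is again uniformly bounded in $L^\infty$ (now becoming $\eta\partial_\eta(\cdot)$ of a bounded profile), so the $\sqrt{\varepsilon}$ prefactor on the left-hand side exactly absorbs the $\varepsilon^{-1/2}$ loss. The terms of the form $b^p\,\partial_y\psi=b^p\,h$ are then bounded by the previous inequality applied to $\mathbf{H}$, while the terms $(\partial_y b^p)\psi$, etc., are controlled by the weighted $L^\infty$ estimates together with $\|\psi/y\|_{L^2}\le C\|\tilde{h}\|_{L^2}$. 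The overall bookkeeping, ensuring that every $\varepsilon^{-1/2}$ is cancelled either by a weight $y$ or by the prefactor $\sqrt{\varepsilon}$, is the routine but delicate part of the argument.
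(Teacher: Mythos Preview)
Your overall strategy---solve the ODE $\partial_y\psi-b^p\psi=\tilde h$ by the integrating factor $h^p(0)/h^p$, deduce $\|\psi/y\|_{L^2}\le C\|\tilde h\|_{L^2}$ via Hardy, and then read off $(\rho,\mathbf u,\mathbf H)$ from the explicit inversion---is exactly the approach the paper intends; the paper itself does not reprove the lemma but refers to \cite{LXY192}, where this integrating-factor/Hardy argument is carried out in the homogeneous case.

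Two small corrections are worth flagging. First, $a^p=\zeta(y)\,u^p/h^p$ is itself $O(1)$, not $O(\varepsilon^{-1/2})$; only $\partial_y a^p$, $b^p$, $c^p$ carry the $\varepsilon^{-1/2}$ singularity. This does not affect your argument. Second, and more to the point, your handling of the term $c^p\tilde g$ (equivalently $b^p\tilde g$) via the divergence identity $\partial_y\tilde g=-\partial_x\tilde h-\partial_x(b^p\psi)$ is needlessly circular, and the proposed ``absorption using the thin $O(\sqrt\varepsilon)$ support of $b^p$'' is not justified: the relevant constant is $\|y\,b^p\|_{L^\infty}=\|\eta\,\partial_\eta h_b^0/h^p\|_{L^\infty}$, which is $O(1)$ but not small in general, so the self-term cannot be absorbed this way. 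The fix is already in your hands: since $\tilde g=g=-\partial_x\psi$, the estimate you have already proved for $\partial_{tx}^\beta\psi$ with $\beta=(0,1)$ gives
\[
\Bigl\|\frac{\tilde g}{y}\Bigr\|_{L^2}=\Bigl\|\frac{\partial_x\psi}{y}\Bigr\|_{L^2}\le C\bigl(\|\tilde h\|_{L^2}+\|\partial_x\tilde h\|_{L^2}\bigr),
\]
which closes the bound on $\|c^p\tilde g\|_{L^2}\le\|y\,c^p\|_{L^\infty}\|\tilde g/y\|_{L^2}$ with no circularity. The same device handles every occurrence of $b^p\,\partial_{tx}^\beta\tilde g$ at higher order. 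With this adjustment your outline is complete and matches the intended proof.
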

The proof of this Lemma can be found in \cite{LXY192}.

The properties of the vector ${\bf{E}}^\varepsilon$, the matrix ${\bf{A}}_i^p$ and the vector ${\bf{C}}^\varepsilon$ are summarized into the following two Propositions for later use.
\begin{proposition}
\label{P3.2}
For any multi-index $\alpha$ satisfying $|\alpha|\le2$ and $i=1, 2$, it holds
\begin{align}\label{3.14}
\|\partial_{tx}^{\alpha}{\bf{A}}_i^p\|_{L^\infty}+\varepsilon^{\frac k2}\|\partial_{tx}^\alpha\partial_y^k({\bf{E}}^0, {\bf{E}}^\varepsilon)\|_{L^2}+\|y\partial_y({\bf{E}}^0, {\bf{E}}^\varepsilon)\|_{L^2}\le C,\quad k=0, 1.
\end{align}
\end{proposition}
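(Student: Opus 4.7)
The plan is to verify each of the three pieces of \eqref{3.14} by direct inspection of the explicit expressions in \eqref{3.11}--\eqref{3.13}, exploiting the systematic cancellation between $\varepsilon^{-1/2}$ factors (from normal derivatives hitting the fast variable $\eta=y/\sqrt{\varepsilon}$) and compensating $\sqrt{\varepsilon}$ factors, either explicit in ${\bf A}_i^p$ or hidden in the smallness of the remainders $r_i$. For $\|\partial_{tx}^\alpha {\bf A}_i^p\|_{L^\infty}$, I would simply unwind the definitions: each nontrivial entry is $\sqrt{\varepsilon}$ times a product involving $\partial_x a^p$, $\partial_y a^p$, or $a^p b^p$, and the chain rule $\partial_y[f(t,x,y/\sqrt{\varepsilon})]=\varepsilon^{-1/2}(\partial_\eta f)(t,x,y/\sqrt{\varepsilon})$ cancels the explicit $\sqrt{\varepsilon}$ prefactor against this singular factor. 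What remains is a quantity such as $\partial_\eta(u^p/h^p)$ or $(u^p/h^p)(\partial_\eta h^p/h^p)$, uniformly bounded thanks to the lower bound $h^p\ge\sigma/2$ and the Sobolev regularity of $(u^p,h^p)$ from Proposition \ref{P2.4}. Tangential derivatives $\partial_{tx}^\alpha$ with $|\alpha|\le 2$ do not introduce new $\varepsilon$-singularities, so one commutes them through and reads off the desired $L^\infty$ bound, with $\zeta(y)$ keeping the contribution localized near the boundary.

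For the $L^2$ estimate of $({\bf E}^0,{\bf E}^\varepsilon)$, I would substitute \eqref{3.13} and treat each term separately. Contributions coming from $r_i$ alone are controlled directly by \eqref{3.3}. The coupling terms $c^p r_5$, $b^p r_5$, $\rho^\varepsilon\partial_y(a^p r_5)$ are the delicate ones because $c^p$ and $b^p$ each carry an extra $\varepsilon^{-1/2}$ scaling. The key observation is that $r_5=\partial_y^{-1}r_4$ is correspondingly small: since $R_4$ (given explicitly in Appendix~B) is built from boundary-layer profiles that decay in $\eta$, vertical integration gains an extra $\sqrt{\varepsilon}$ factor relative to $\|r_4\|_{L^2}\le C$, which is exactly what is needed so that $\|c^p r_5\|_{L^2}, \|b^p r_5\|_{L^2}\le C$. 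The term $\rho^\varepsilon\partial_y(a^p r_5)$ is expanded via Leibniz into $\rho^\varepsilon(\partial_y a^p)r_5$ (handled as above, using also the uniform bound on $\rho^\varepsilon$) and $\rho^\varepsilon a^p r_4$ (immediately $O(1)$ since $a^p$ is bounded). The same argument extends to $\partial_{tx}^\alpha$ with $|\alpha|\le 2$, and for $k=1$ the extra $\partial_y$ costs one factor of $\varepsilon^{-1/2}$, precisely absorbed by the prefactor $\varepsilon^{1/2}$ on the left of \eqref{3.14}.

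Finally, for $\|y\partial_y({\bf E}^0,{\bf E}^\varepsilon)\|_{L^2}$ I would distribute $y\partial_y$ by Leibniz. When it falls on a boundary-layer coefficient, the identity $y\partial_y f(y/\sqrt{\varepsilon})=\eta(\partial_\eta f)(\eta)$ produces an $L^\infty$-bounded quantity and the remaining $L^2$ factor ($r_i$ or $r_5$) is controlled as in the preceding step. When $y\partial_y$ falls on $r_i$ itself, the weighted estimate $\|y\partial_y r_i\|_{L^2}\le C$ from \eqref{3.3} applies. The hardest part will be the middle paragraph: making quantitative the smallness of $r_5$ carefully enough to absorb the $\varepsilon^{-1/2}$ scaling of $c^p$ and $b^p$. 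Once this structural observation about $r_4$ is extracted from the explicit formulas in Appendix~B, the remainder of the argument reduces to routine product-rule bookkeeping in Sobolev spaces.
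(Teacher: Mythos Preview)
Your outline for $\|\partial_{tx}^\alpha {\bf A}_i^p\|_{L^\infty}$ and for the bare $r_i$ contributions to $({\bf E}^0,{\bf E}^\varepsilon)$ is fine. The gap is in your treatment of $c^p r_5$, $b^p r_5$, and $\rho^\varepsilon(\partial_y a^p)r_5$. You assert that $R_4$ ``is built from boundary-layer profiles that decay in $\eta$,'' so that ``vertical integration gains an extra $\sqrt{\varepsilon}$.'' This is not true: the high-order piece $R_4^H$ in Appendix~B contains genuinely interior terms such as $\kappa\Delta g^1$ and $v^2\partial_y g^1$ carrying no boundary-layer factor whatsoever. For those contributions $r_5^{\rm int}:=\partial_y^{-1}r_4^{\rm int}$ is an $O(1)$ smooth function with no $\varepsilon$-smallness, and since $\|b^p\|_{L^2_y}\sim\varepsilon^{-1/4}$, your product bound would yield $\|b^p\,r_5^{\rm int}\|_{L^2}=O(\varepsilon^{-1/4})$, which diverges.

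The missing observation is that $r_5|_{y=0}=0$. Evaluating \eqref{3.5} at $y=0$ and invoking the boundary data $\psi|_{y=0}=0$, $(u,v,u^\varepsilon,v^\varepsilon,g^a)|_{y=0}=0$ together with $\partial_y h|_{y=0}=0$ (whence $\Delta\psi|_{y=0}=0$) gives $r_5(t,x,0)\equiv 0$. H\"ardy's inequality then furnishes $\|r_5/y\|_{L^2}\le 2\|\partial_y r_5\|_{L^2}=2\|r_4\|_{L^2}\le C$. Combining this with the pointwise bound $|b^p|,|c^p|,|\partial_y a^p|\le C\varepsilon^{-1/2}\langle\eta\rangle^{-1}\le C/y$ (from the fast decay of the profiles) gives $\|b^p r_5\|_{L^2}\le C\|r_5/y\|_{L^2}\le C$, and likewise for $c^p r_5$ and $(\partial_y a^p)r_5$; the cutoff $\zeta$ in $a^p$ then localises $\partial_x(a^p r_5)$ to $y\in[0,2]$, where the same Hardy bound controls $\|r_5\|_{L^2([0,2])}$. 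This is precisely the ``H\"ardy trick'' deployed throughout Appendix~C, and is what the paper's one-line proof (``straightforward due to \eqref{3.3} and the properties of the approximate solution'') is implicitly invoking.
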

The proof is straightforward due to \eqref{3.3} and the properties of the approximate solution.
\begin{proposition}\label{vector}
There exists a positive constant $C$ which is independent of $\varepsilon$, such that
\begin{align}\label{3.15}
&\|{\bf{C}}^\varepsilon\|_{L^2}+\|{\bf{C}}^0\|_{L^2}\le C\|{\bf{U}}\|_{L^2},\quad \|\mathcal{Z}{\bf{C}}^0\|_{L^2}\le C\|\mathcal{Z}{\bf{U}}\|_{L^2}+\|{\bf{U}}|_{L^2},\notag\\
&\|\partial_\tau{\bf{C}}^\varepsilon\|_{L^2}\le C\|\partial_\tau{\bf{U}}\|_{L^2}+(1+Q(t))\|{\bf{U}}\|_{L^2},\notag\\
&\|y\partial_y{\bf{C}}^\varepsilon\|_{L^2}\le C\|y\partial_y{\bf{U}}\|_{L^2}+(1+Q(t))\|{\bf{U}}\|_{L^2}.
\end{align}
\end{proposition}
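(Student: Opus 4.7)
The plan is to unpack the explicit form of ${\bf{C}}^0$ and ${\bf{C}}^\varepsilon$ promised in Appendix C and then estimate them term by term in H\"older fashion, pairing each linear occurrence of $(\tilde\rho,{\bf{U}})$ in $L^2$ with a coefficient controlled in $L^\infty$. After the transformation \eqref{3.6}, the genuinely singular stretching contributions $v\partial_y{\bf{H}}^a$, $v\partial_y\rho^a$ and $g\partial_yu^a$ have all been moved into the principal matrices ${\bf{A}}_1^\varepsilon,{\bf{A}}_2^\varepsilon$ and ${\bf{B}}^\varepsilon$ in \eqref{3.9}; the singular source contributions have been moved into ${\bf{E}}^0,{\bf{E}}^\varepsilon$. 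What is left in ${\bf{C}}^0,{\bf{C}}^\varepsilon$ is therefore \emph{linear} in the new error variables, with coefficients built only from $a^p,b^p,c^p$, from first derivatives of the approximate solution $(\rho^a,{\bf{u}}^a,{\bf{H}}^a)$, and from products $a^p g^\varepsilon, a^p v^\varepsilon$, etc.

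For the first line of \eqref{3.15}, namely $\|{\bf{C}}^0\|_{L^2}+\|{\bf{C}}^\varepsilon\|_{L^2}\lesssim\|{\bf{U}}\|_{L^2}$, I would use that, by Propositions \ref{P2.4}--\ref{P2.8} and the cut-off $\zeta$, the coefficients $a^p,b^p,c^p$ and their tangential derivatives $\partial_{t,x}a^p$ etc.\ are bounded in $L^\infty$ uniformly in $\varepsilon$. The only dangerous occurrences are $\partial_y a^p,\partial_y b^p,\partial_y c^p$, which carry a factor $\varepsilon^{-1/2}$; but, by the cancellation structure explained in ingredient~(c) of the introduction, every such $\partial_y$-contribution in ${\bf{C}}^0,{\bf{C}}^\varepsilon$ appears paired with a factor $a^p g^\varepsilon$ (or with $h^p$ in the denominator), so after using the Hardy-type bound $|g^\varepsilon|\lesssim y$ and the boundedness of $u^p/h^p$ away from vacuum, the coefficient is uniformly bounded.

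For the conormal bound $\|\mathcal{Z}{\bf{C}}^0\|_{L^2}$ I would apply Leibniz: when $\mathcal{Z}$ lands on a component of ${\bf{U}}$ we read off $\|\mathcal{Z}{\bf{U}}\|_{L^2}$, and when it lands on a coefficient we exploit that $\mathcal{Z}$ is a conormal vector field, so $y\partial_y$ acting on a profile $f(t,x,y/\sqrt{\varepsilon})$ equals $\eta\partial_\eta f$, which is $O(1)$ by the weighted Sobolev bounds of Proposition \ref{P2.4}; tangential components of $\mathcal{Z}$ are harmless. The lower-order leftover piece is exactly $\|{\bf{U}}\|_{L^2}$. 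The same Leibniz decomposition handles $\|\partial_\tau{\bf{C}}^\varepsilon\|_{L^2}$ and $\|y\partial_y{\bf{C}}^\varepsilon\|_{L^2}$: the extra polynomial $Q(t)$ absorbs the higher Sobolev norms of the error that appear whenever $\partial_\tau$ hits a coefficient depending on the \emph{full} quantities $\rho^\varepsilon=\rho^a+\varepsilon^{3/2}\rho,\,{\bf{u}}^\varepsilon,\,{\bf{H}}^\varepsilon$ (through the $\varepsilon^{3/2}$ correction), bounded a~priori via Lemma \ref{L3.1}.

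The main obstacle is purely bookkeeping: one must check, term by term in the expressions to be recorded in Appendix C, that no uncompensated factor of $\varepsilon^{-1/2}$ survives after $\partial_\tau$ or $y\partial_y$ is applied, and that every surviving factor is either $L^\infty$-bounded uniformly in $\varepsilon$ or can be absorbed into $Q(t)\|{\bf{U}}\|_{L^2}$. This is exactly the pay-off of the engineered cancellation in \eqref{3.6}: whenever $\partial_y$ falls on a boundary-layer profile inside a coefficient of ${\bf{C}}^0,{\bf{C}}^\varepsilon$, it is automatically paired with the factor $y$ supplied by the conormal weight, or with a vanishing-at-the-boundary profile factor, yielding a uniform-in-$\varepsilon$ bound.
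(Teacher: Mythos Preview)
Your overall plan matches the paper's proof in Appendix~C: write ${\bf{C}}^\varepsilon={\bf{M}}\cdot(u,v,h,g)^T+\psi\,{\bf{V}}$, show the entries of ${\bf{M}}$ and ${\bf{V}}$ are $O(1)$ in $L^\infty$, then apply Leibniz for the $\partial_\tau$ and $y\partial_y$ estimates, with $Q(t)$ absorbing derivatives that land on the full quantities $(\rho^\varepsilon,{\bf u}^\varepsilon,{\bf H}^\varepsilon)$.

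One point in your mechanism description needs correction, however. You assert that every surviving $\partial_y$ of a boundary-layer coefficient in ${\bf C}^\varepsilon$ is paired either with a factor $a^p g^\varepsilon$ (so that Hardy applies) or with a conormal weight $y$. That is true for entries like ${\bf M}_{11}=-\rho^\varepsilon\partial_y(v^a-a^p g^a)$, where indeed $\|g^a\partial_y a^p\|_{L^\infty}\le\|y\partial_y a^p\|_{L^\infty}\|g^a/y\|_{L^\infty}$. But it fails for ${\bf M}_{12}=\rho^\varepsilon\partial_y(u^a-a^p h^a)$ and for $\partial_y h^a-b^p h^a$ in ${\bf M}_{32}$: here there is no $g^\varepsilon$ and no $y$-weight, yet the individual pieces $\partial_y u^a,\ \partial_y a^p\cdot h^a,\ a^p\partial_y h^a$ are each $O(\varepsilon^{-1/2})$. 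The boundedness comes instead from the \emph{algebraic} cancellation built into the definition of $a^p,b^p$: since $a^p=\zeta\,u^p/h^p$ and $b^p=\partial_y h^p/h^p$, one has $u^a-a^p h^a=(u^0-\overline{u^0})-a^p(h^0-\overline{h^0})+O(\sqrt\varepsilon)=O(\sqrt\varepsilon)$ by Taylor expansion at $y=0$, so its $\partial_y$ is $O(1)$; similarly $\partial_y h^a-b^p h^a=O(1)$. The entry ${\bf M}_{14}$ then requires a further algebraic rearrangement combining both of these identities. You should check these cancellations explicitly rather than relying on the Hardy/$g^\varepsilon$-pairing picture, which does not cover them.
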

The proof will be given in Appendix C.
%\begin{remark}
%The factor $1+Q(t)$ appears due to \eqref{3.29}, the $L^\infty$ estimates of the first order derivatives of $\rho^\varepsilon$. In this way, we cannot estimate $\|\partial_{\tau\tau}{\bf{C}}^\varepsilon\|_{L^2}$ since we do not have any information of the second order derivatives of $\rho^\varepsilon$. However, for these terms which do not involve the second order derivatives of the density, the estimates is similar as above and the proof is nearly the same as the process in Appendix C.
%\end{remark}

Next, we introduce a diagonal matrix
\begin{align}\label{3.16}
{\bf{D}}:=diag\left(1, 1, 1-\rho^\varepsilon(a^p)^2, 1-\rho^\varepsilon(a^p)^2\right)^T,
\end{align}
such that ${\bf{DA}}_i\ (i=1, 2)$ are symmetric and have the following form.
\begin{align}\label{3.17}
{\bf{D}}{\bf{A}}_1=\left(
\begin{array}{cc}
\rho^\varepsilon(u^\varepsilon+a^ph^\varepsilon) {\bf{I}}_{2\times 2}& [\rho^\varepsilon(a^p)^2-1]h^\varepsilon{\bf{I}}_{2\times2}\\
{[}\rho^\varepsilon(a^p)^2-1{]}h^\varepsilon{\bf{I}}_{2\times2}&  [1-\rho^\varepsilon(a^p)^2](u^\varepsilon-a^ph^\varepsilon){\bf{I}}_{2\times 2}
\end{array}
\right),
\end{align}
\begin{align}\label{3.18}
{\bf{D}}{\bf{A}}_2=\left(
\begin{array}{cc}
\rho^\varepsilon(v^\varepsilon+a^pg^\varepsilon) {\bf{I}}_{2\times 2}& [\rho^\varepsilon(a^p)^2-1]g^\varepsilon{\bf{I}}_{2\times2}\\
{[}\rho^\varepsilon(a^p)^2-1{]}g^\varepsilon{\bf{I}}_{2\times2}&  [1-\rho^\varepsilon(a^p)^2](v^\varepsilon-a^pg^\varepsilon){\bf{I}}_{2\times 2}
\end{array}
\right).
\end{align}
Furthermore, in order to guarantee both of the matrices ${\bf{D}}$ and ${\bf{DB}}^\varepsilon$ are positive definite, we need to impose some restrictions on $a^p$. Indeed, since there are no any boundary layers initially, that is, $a^p|_{t=0}=0$, then for any fixed $\delta>0$ small enough, there exists a $T_\delta\in [0, T_6]$, such that
\begin{align}\label{3.19}
\sup\limits_{t\in[0, T_\delta]}\|a^p\|_{L^\infty}^2\le\frac{4(\mu-\delta)(\kappa-\delta)}{(\mu+\rho^\varepsilon\kappa)^2-4\delta\rho^\varepsilon\kappa}\le\frac{4(\mu-\delta)(\kappa-\delta)}{(\mu+\underline{\rho}\kappa)^2-4\delta\underline{\rho}\kappa}.
\end{align}
Under this assumption, we immediately get that ${\bf{DB}}^\varepsilon$ is positive definite and it holds that for any vector ${\bf{X}}\in\mathbb{R}^4$
\begin{align}\label{3.20}
{\bf{DB}}^\varepsilon{\bf{X}}\cdot {\bf{X}}\ge \delta |{\bf{X}}|^2.
\end{align}
Moreover, we also have
\begin{align}\label{3.21}
1-\rho^\varepsilon(a^p)^2\ge\frac{(\mu-\rho^\varepsilon\kappa)^2+4\rho^\varepsilon\delta(\mu-\delta)}{(\mu+\rho^\varepsilon\kappa)^2-4\delta\rho^\varepsilon\kappa}\ge c_\delta>0,
\end{align}
which implies that ${\bf{D}}$ is also positive definite.

\subsection{The a priori estimate} The goal of this subsection is to define some energy functionals, give the a priori assumptions and establish the $L^\infty$ estimates of the density $\rho^\varepsilon$ together with its derivatives which will be used frequently in the next subsections. We start with the following energy functional.
\begin{align}\label{3.23}
N(t)&=\sum\limits_{|\alpha|\le 2}\sup\limits_{0\le s\le t}\|\varepsilon^{\frac\alpha 2}\partial_{\tau}^\alpha{\bf{U}}(s)\|_{L^2}^2
+\sup\limits_{0\le s\le t}\|\varepsilon^{\frac 12}y\partial_y{\bf{U}}(s)\|_{L^2}^2+\sum\limits_{|\beta|\le 1}\sup\limits_{0\le s\le t}\|\varepsilon^{\frac\beta 2}\mathcal{Z}^\beta\tilde{\rho}(s)\|_{L^2}^2\notag\\
&\quad+\sum\limits_{|\alpha|\le 2}\varepsilon\int_0^t \|\varepsilon^{\frac\alpha 2}\nabla\partial_{\tau}^\alpha{\bf{U}}(s)\|_{L^2}^2\;ds
+\varepsilon\int_0^t \|\varepsilon^{\frac 12}\nabla(y\partial_y{\bf{U}})(s)\|_{L^2}^2\;ds.
\end{align}
Here, $\partial_\tau=\partial_t, \partial_x$ denotes the tangential derivatives, and $\mathcal{Z}=\partial_t, \partial_x, y\partial_y, \sqrt{\varepsilon}\partial_y$ denotes the conormal derivatives. %To obtain the uniform energy estimate, we use the continuous argument.

In what follows, we always assume there exist constants $C>0$ and $T>0$, such that
\begin{align}\label{3.24}
\|\varepsilon{\bf{U}}\|_{L_{txy}^\infty}\le C
\end{align}
holds for any $t\in [0, T]$. Then, according to \eqref{3.1} and \eqref{3.24}, it is direct to achieve that
\begin{align}\label{3.25}
\|({\bf{u}}^\varepsilon, {\bf{H}}^\varepsilon)\|_{L_{txy}^\infty}\le C.
\end{align}
Moreover, we also introduce the energy functional.
\begin{align}\label{3.26}
Q(t)&=\|\sqrt{\varepsilon} v\|_{L^\infty}+\int_0^t\|\varepsilon(\partial_x{\bf{u}}, \partial_x{\bf{H}})\|_{L^\infty}^2\;ds+\int_0^t\|\varepsilon^{\frac32}(\partial_t{\bf{u}}, \partial_t{\bf{H}})\|_{L^\infty}^2\;ds\notag\\
&\quad+\int_0^t \left(\varepsilon^{3}\|(\partial_{yy}u, \partial_{yy}h)\|_{L^2}^2+\varepsilon^{4}\|(\partial_{yy\tau}u, \partial_{yy\tau}h)\|_{L^2}^2\right)\;ds+\int_0^t\|\varepsilon^{\frac32}(\partial_yu, \partial_yh)\|_{L^\infty}^2\;ds.
\end{align}
In fact, $Q(t)$ can be controlled by $N(t)$ which is stated in the following Lemma.
\begin{lemma}
\label{L3.4}
For $Q(t)$ and $N(t)$ defined in \eqref{3.23} and \eqref{3.26} respectively, it holds that
\begin{align}\label{3.27}
Q(t)\lesssim 1+N(t)+N^2(t).
\end{align}
\end{lemma}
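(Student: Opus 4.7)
The plan is to reduce every summand of $Q(t)$ to $L^2$-based quantities already recorded in $N(t)$, so that each contribution is absorbed at worst quadratically. The two main tools are the anisotropic Sobolev inequality on $\mathbb T\times\mathbb R_+$,
\begin{equation*}
\|f\|_{L^\infty}^2\lesssim \bigl(\|f\|_{L^2}+\|\partial_x f\|_{L^2}\bigr)\bigl(\|\partial_y f\|_{L^2}+\|\partial_{xy} f\|_{L^2}\bigr),
\end{equation*}
and Lemma \ref{L3.1}, which exchanges $L^2$ norms of derivatives of $(\mathbf u,\mathbf H)$ for those of $\mathbf U$. The divergence-free conditions $\partial_x v=-\partial_y u$ and $\partial_x g=-\partial_y h$ will be used to trade a tangential derivative for a normal one whenever this saves a power of $\sqrt\varepsilon$.

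For the four $L^\infty$ summands $\|\sqrt\varepsilon v\|_{L^\infty}$, $\|\varepsilon\partial_x(\mathbf u,\mathbf H)\|_{L^\infty}^2$, $\|\varepsilon^{3/2}\partial_t(\mathbf u,\mathbf H)\|_{L^\infty}^2$ and $\|\varepsilon^{3/2}\partial_y(u,h)\|_{L^\infty}^2$, I would apply the Sobolev inequality above with the corresponding $f$. Each of the two factors on the right, after Lemma \ref{L3.1}, is of one of two types: (a) a term of the form $\|\varepsilon^{\alpha/2}\partial_\tau^\alpha\mathbf U\|_{L^2}$ or $\|\sqrt\varepsilon\,y\partial_y\mathbf U\|_{L^2}$ already in the pointwise-in-time part of $N(t)$, or (b) a term of the form $\sqrt\varepsilon\|\nabla\partial_\tau^\alpha\mathbf U\|_{L^2}$ or $\sqrt\varepsilon\|\nabla(y\partial_y\mathbf U)\|_{L^2}$ whose time integral sits in the dissipation part of $N(t)$. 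The $\varepsilon$-weights are arranged precisely so that the worst case, namely $\partial_y$ landing on $a^p$ or $\psi$ in the identity $v=\tilde v-\partial_x(a^p\psi)$ which costs $\varepsilon^{-1/2}$, is compensated by the explicit $\sqrt\varepsilon$ in front of $v$. Cauchy--Schwarz in time and Young's inequality then bound each contribution by $1+N(t)+N(t)^2$.

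The genuinely new step is the $L^2$ bound on $\partial_{yy}(u,h)$, since $N(t)$ records at most one normal derivative of $\mathbf U$. Following point (d) of the introduction, I would read off the first and third rows of \eqref{3.9} as algebraic relations for the normal Laplacian: after moving the $\partial_{xx}$ part of the Laplacian to the right-hand side, one obtains closed expressions for $\varepsilon\mu\partial_{yy}\tilde u$ and $\varepsilon\kappa\partial_{yy}\tilde h$ in terms of $\rho^\varepsilon\partial_t\tilde u$, $(\mathbf A_i^\varepsilon\partial_i\mathbf U)$, $\nabla p$, the lower-order data $(\mathbf C^\varepsilon,\mathbf E^\varepsilon)$ and cross-derivative remainders. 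Taking the $L^2$ norm, multiplying by $\varepsilon^{1/2}$ and using Propositions \ref{P3.2} and \ref{vector}, the a priori assumption \eqref{3.24} and Lemma \ref{L3.1}, every term on the right is bounded by $\sqrt{N(t)}$, yielding the $\varepsilon^3\|\partial_{yy}u\|_{L^2}^2$ bound. The passage from $\partial_{yy}\tilde u$ back to $\partial_{yy}u$ adds $\partial_{yy}(a^p h)$, whose loss $\partial_y^k a^p=O(\varepsilon^{-k/2})$ is balanced by the explicit $\varepsilon^{3/2}$-weight in $Q(t)$.

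The main obstacle is the tangentially differentiated piece $\varepsilon^4\|\partial_{yy\tau}(u,h)\|_{L^2}^2$: applying $\partial_\tau$ to the algebraic identity produces the term $\partial_\tau\rho^\varepsilon\cdot\partial_t\tilde u$, and $N(t)$ carries no direct control of $\partial_\tau\rho^\varepsilon$. I would rewrite $\partial_\tau\rho^\varepsilon$ using the first equation of \eqref{1.1} in terms of $\mathbf u^\varepsilon\cdot\nabla\rho^\varepsilon$ and then invoke the transport-based $L^\infty$ bound $\|\nabla\rho^\varepsilon\|_{L^\infty}\lesssim\mathcal P(\|\nabla\mathbf u^\varepsilon\|_{L^\infty})$ from Lemma \ref{L3.6} (cf.\ point (c) of the introduction). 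The factor $\|\nabla\mathbf u^\varepsilon\|_{L^\infty}$ loops back to the previously estimated $L^\infty$ pieces of $Q(t)$, contributing at most an additional $N(t)^2$. Collecting the six estimates then produces $Q(t)\lesssim 1+N(t)+N^2(t)$, as claimed.
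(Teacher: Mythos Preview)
Your overall strategy---anisotropic Sobolev for the $L^\infty$ pieces, then reading $\partial_{yy}$ off the equation---matches the paper. What you miss is structural: you treat the first and third rows of \eqref{3.9} symmetrically, but they are not. The third row (the $\tilde h$-equation) has ${\bf S}^\varepsilon_{33}=1$ and carries no pressure entry, so solving it for $\kappa\varepsilon\partial_{yy}\tilde h$ leaves only $\partial_t\tilde h$, $({\bf u}^\varepsilon-a^p{\bf H}^\varepsilon)\cdot\nabla\tilde h$, ${\bf H}^\varepsilon\cdot\nabla\tilde u$, ${\bf C}_3^\varepsilon$, ${\bf E}_3^\varepsilon$, $\kappa\varepsilon\partial_{xx}\tilde h$ on the right---no $\rho^\varepsilon$ coefficient, no $\nabla p$. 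The paper therefore establishes the $\tilde h$-bounds \eqref{A.1}--\eqref{A.3} first, cleanly; the ``main obstacle'' $\partial_\tau\rho^\varepsilon\cdot\partial_t\tilde u$ that you isolate simply never arises for $\tilde h$. Only afterwards is the first row used for $\tilde u$, with the coupling term $(\mu-\rho^\varepsilon\kappa)a^p\varepsilon\Delta\tilde h$ already controlled.

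Because you do not make this separation, two loose ends appear. First, your remedy for $\partial_\tau\rho^\varepsilon$ via Lemma~\ref{L3.6} is circular as stated: the inequality $\|\mathcal Z\rho^\varepsilon\|_{L^\infty}\lesssim 1+N+N^2$ in Lemma~\ref{L3.6} is obtained \emph{from} Lemma~\ref{L3.4} (its proof bounds $\int_0^t\|\partial_x u^\varepsilon\|_{L^\infty}^2$ by $Q$ and then invokes $Q\lesssim 1+N+N^2$). One can salvage this by first showing that the $L^\infty$ summands of $Q$ depend only on $N$ and feeding that into the transport argument for $\rho^\varepsilon$, but the logical order must be made explicit. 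Second, listing ``$\nabla p$'' among the right-hand terms of the $\tilde u$-identity and declaring it bounded by $\sqrt{N}$ is not justified: the pressure estimate of Proposition~\ref{P3.13} itself contains $\varepsilon\|\partial_{yy}u\|^{1/2}\|\partial_y u\|^{1/2}$, so an absorption step is needed that you do not supply. Doing $\tilde h$ first, as the paper does, sidesteps both issues.
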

The proof relies on the standard Sobolev embedding inequalities, and we postpone it in Appendix A. From which and \eqref{3.1} we immediately deduce that
\begin{corollary}
\label{C3.5}
For $({\bf{u}}^\varepsilon, {\bf{H}}^\varepsilon)$ satisfies \eqref{3.1}, we have
\begin{align}\label{3.28}
\int_0^t\|(\partial_{\tau}{\bf{u}}^\varepsilon, \partial_{\tau}{\bf{H}}^\varepsilon)\|_{L^\infty}^2\;ds\lesssim Q(t)\lesssim 1+N(t)+N^2(t).
\end{align}
\end{corollary}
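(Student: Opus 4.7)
The plan is to deduce the Corollary essentially by decomposition: the error function $({\bf u}, {\bf H})$ already appears in $Q(t)$ with appropriate $\varepsilon$-weights on its tangential derivatives, and the approximate part $({\bf u}^a, {\bf H}^a)$ has uniformly bounded tangential derivatives by the construction in Section 2.

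First I would invoke the expansion \eqref{3.1}, writing
\begin{equation*}
\partial_\tau({\bf u}^\varepsilon,{\bf H}^\varepsilon)=\partial_\tau({\bf u}^a,{\bf H}^a)+\varepsilon^{\frac32}\partial_\tau({\bf u},{\bf H}),\qquad \partial_\tau=\partial_t,\partial_x,
\end{equation*}
so that by the triangle inequality in $L^\infty$,
\begin{equation*}
\|\partial_\tau({\bf u}^\varepsilon,{\bf H}^\varepsilon)\|_{L^\infty}^{2}\lesssim \|\partial_\tau({\bf u}^a,{\bf H}^a)\|_{L^\infty}^{2}+\varepsilon^{3}\|\partial_\tau({\bf u},{\bf H})\|_{L^\infty}^{2}.
\end{equation*}

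Next I would bound each piece separately. For the approximate part, Propositions \ref{E1}--\ref{P2.8} together with the Sobolev embedding $H^{2}(\mathbb{T}\times\mathbb{R}_+)\hookrightarrow L^\infty$ give $\|\partial_\tau({\bf u}^a,{\bf H}^a)\|_{L^\infty}\lesssim 1$ uniformly in $\varepsilon$, because the tangential derivatives $\partial_t,\partial_x$ do not generate the singular factor $\varepsilon^{-1/2}$ that the fast variable $\eta=y/\sqrt\varepsilon$ would produce under $\partial_y$. Integrating in time on $[0,t]$ then yields a contribution $\lesssim t\lesssim 1$.

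For the perturbation part, I would match the scalings to those appearing in the definition \eqref{3.26} of $Q(t)$. The $\partial_t$-piece gives directly
\begin{equation*}
\int_0^t\varepsilon^{3}\|\partial_t({\bf u},{\bf H})\|_{L^\infty}^{2}ds=\int_0^t\|\varepsilon^{\frac32}\partial_t({\bf u},{\bf H})\|_{L^\infty}^{2}ds\le Q(t),
\end{equation*}
while the $\partial_x$-piece satisfies
\begin{equation*}
\int_0^t\varepsilon^{3}\|\partial_x({\bf u},{\bf H})\|_{L^\infty}^{2}ds=\varepsilon\int_0^t\|\varepsilon\partial_x({\bf u},{\bf H})\|_{L^\infty}^{2}ds\le \varepsilon\, Q(t)\lesssim Q(t)
\end{equation*}
since $\varepsilon\le 1$. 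Combining these with the uniform bound on the approximate part gives the first inequality $\int_0^t\|\partial_\tau({\bf u}^\varepsilon,{\bf H}^\varepsilon)\|_{L^\infty}^2ds\lesssim Q(t)$ (after absorbing the $O(1)$ term into $Q(t)$, or equivalently, into the implicit constant by noting $Q(t)\ge 0$ and enlarging by $1+Q(t)$).

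Finally the second inequality $Q(t)\lesssim 1+N(t)+N^{2}(t)$ is exactly the content of Lemma \ref{L3.4}, which has already been established. Chaining the two bounds yields the stated conclusion. No genuine obstacle arises here; the only thing to watch is the bookkeeping of $\varepsilon$-weights, since the natural powers of $\varepsilon$ on $\partial_t$ and $\partial_x$ in $Q(t)$ differ ($\varepsilon^{3/2}$ versus $\varepsilon$), and one must check that the surplus factor of $\varepsilon^{1/2}$ in the $\partial_x$ case is harmless.
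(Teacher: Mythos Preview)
Your proposal is correct and is exactly the argument the paper has in mind; the paper omits the proof as ``straightforward,'' and your decomposition via \eqref{3.1} --- bounding the tangential derivatives of the approximate solution uniformly (since $\partial_t,\partial_x$ do not hit the fast variable) and matching the error part to the terms already present in $Q(t)$ --- is the natural route. Your remark that the approximate part contributes an additive $O(1)$ rather than literally $\lesssim Q(t)$ is a minor imprecision in the paper's statement, and as you note it is harmless in every application (the extra constant is absorbed into $(C_\lambda t+\lambda)\mathcal{P}(N(t))$).
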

Since the proof is straightforward, we omit the details here. Moreover, we also have the following estimates for the density $\rho^\varepsilon$.
\begin{lemma}
\label{L3.6}
There exists a unique solution $\rho^\varepsilon$ of the system \eqref{1.1}, it follows that
\begin{align}\label{3.29}
\|\rho^\varepsilon\|_{L^\infty}\le C,\quad \|\mathcal{Z}\rho^\varepsilon\|_{L^\infty}\lesssim 1+Q(t)\lesssim 1+N(t)+N^2(t).
\end{align}
\end{lemma}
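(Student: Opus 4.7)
The pointwise bound $\|\rho^\varepsilon\|_{L^\infty}\le C$ is immediate from the transport structure of the first equation in \eqref{1.1}: since $\nabla\cdot{\bf{u}}^\varepsilon=0$ and $v^\varepsilon|_{y=0}=0$, the vector field ${\bf{u}}^\varepsilon$ is divergence-free and tangent to the boundary, so $\rho^\varepsilon$ is conserved along its characteristics, giving $\|\rho^\varepsilon(t)\|_{L^\infty}=\|\rho_0\|_{L^\infty}\le\overline{\rho}$; the lower bound $\rho^\varepsilon\ge\underline{\rho}$ is likewise preserved.

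For the conormal derivative estimate, the plan is to apply each $Z\in\{\partial_x,\,y\partial_y,\,\sqrt{\varepsilon}\partial_y\}$ to the transport equation, producing a transport equation for $Z\rho^\varepsilon$ with source $-[Z,{\bf{u}}^\varepsilon\cdot\nabla]\rho^\varepsilon$. A direct commutator computation, combined with the divergence-free identity $\partial_y v^\varepsilon=-\partial_x u^\varepsilon$, yields (writing $L:=\partial_t+{\bf{u}}^\varepsilon\cdot\nabla$)
\begin{align*}
L(\sqrt{\varepsilon}\partial_y\rho^\varepsilon)&=-\sqrt{\varepsilon}\partial_y u^\varepsilon\,\partial_x\rho^\varepsilon+\partial_x u^\varepsilon\,(\sqrt{\varepsilon}\partial_y\rho^\varepsilon),\\
L(y\partial_y\rho^\varepsilon)&=-y\partial_y u^\varepsilon\,\partial_x\rho^\varepsilon+\partial_x u^\varepsilon\,(y\partial_y\rho^\varepsilon)+v^\varepsilon\partial_y\rho^\varepsilon,\\
L(\partial_x\rho^\varepsilon)&=-\partial_x u^\varepsilon\,\partial_x\rho^\varepsilon-\partial_x v^\varepsilon\,\partial_y\rho^\varepsilon.
\end{align*}
The two terms containing the singular factor $\partial_y\rho^\varepsilon$, namely $v^\varepsilon\partial_y\rho^\varepsilon$ and $\partial_x v^\varepsilon\partial_y\rho^\varepsilon$, are recast via the H\"ardy trick anchored at $y=0$: the no-slip boundary conditions give $u^\varepsilon|_{y=0}=v^\varepsilon|_{y=0}=0$ and hence $\partial_x u^\varepsilon|_{y=0}=\partial_x v^\varepsilon|_{y=0}=0$, and the divergence-free relation additionally yields $\partial_{xy}v^\varepsilon|_{y=0}=-\partial_{xx}u^\varepsilon|_{y=0}=0$. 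The mean-value theorem then produces
\[
|v^\varepsilon/y|\lesssim\|\partial_x u^\varepsilon\|_{L^\infty},\qquad |\partial_x v^\varepsilon/y|\lesssim\|\partial_{xx}u^\varepsilon\|_{L^\infty},
\]
so both bad terms become bounded coefficients multiplying $y\partial_y\rho^\varepsilon$.

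Applying the $L^\infty$ maximum principle along characteristics and closing the resulting coupled three-component system by Gr\"onwall, the coefficients $\partial_x u^\varepsilon$, $\sqrt{\varepsilon}\partial_y u^\varepsilon$, $y\partial_y u^\varepsilon$ and $\partial_{xx}u^\varepsilon$ are handled via the splitting ${\bf{u}}^\varepsilon={\bf{u}}^a+\varepsilon^{3/2}{\bf{u}}$: the approximate part is uniformly $O(1)$ from the construction of Section~2, while the error part is controlled in $L^2_t L^\infty_x$ by $Q(t)$, supplemented by anisotropic Sobolev/Gagliardo--Nirenberg applied to the second-order tangential estimates encoded in $N(t)$ through Lemma~\ref{L3.4}. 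This yields $\|Z\rho^\varepsilon(t)\|_{L^\infty}\lesssim 1+Q(t)$, and the bound on $\partial_t\rho^\varepsilon$ then follows directly from $\partial_t\rho^\varepsilon=-u^\varepsilon\partial_x\rho^\varepsilon-v^\varepsilon\partial_y\rho^\varepsilon$ after the same H\"ardy rewriting of $v^\varepsilon\partial_y\rho^\varepsilon$. The main difficulty lies in the $\partial_x v^\varepsilon\partial_y\rho^\varepsilon$ source in the $\partial_x\rho^\varepsilon$-equation: the product is a priori of order $\varepsilon^{-1/2}$, and only the combined vanishing of $\partial_x v^\varepsilon$ and $\partial_{xy}v^\varepsilon$ at $y=0$, together with the tangential second-derivative control of ${\bf{u}}^\varepsilon$ furnished by $N(t)$, permits trading it for a bounded multiple of $y\partial_y\rho^\varepsilon$ and so closing the Gr\"onwall loop.
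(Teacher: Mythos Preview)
The overall structure—deriving transport equations for $Z\rho^\varepsilon$ with $Z\in\{\partial_x,\,y\partial_y,\,\sqrt{\varepsilon}\partial_y\}$, applying the maximum principle along characteristics, and closing via a coupled Gr\"onwall argument—matches the paper's proof. However, your treatment of the critical source $\partial_x v^\varepsilon\,\partial_y\rho^\varepsilon$ in the $\partial_x\rho^\varepsilon$-equation does not close. You apply the H\"ardy trick to the \emph{full} $\partial_x v^\varepsilon$ and arrive at the coefficient $\|\partial_{xx}u^\varepsilon\|_{L^\infty}$ in front of $y\partial_y\rho^\varepsilon$. That coefficient is not controlled by $N(t)$ or $Q(t)$: the functional $N(t)$ gives only $\sup_s\|\varepsilon\partial_{xx}{\bf U}\|_{L^2}^2$ and $\varepsilon^3\int_0^t\|\nabla\partial_{xx}{\bf U}\|_{L^2}^2\,ds$, and upgrading $\partial_{xx}u$ from $L^2_{xy}$ to $L^\infty_{xy}$ via anisotropic Sobolev (as in \eqref{A.5}) would require a quantity like $\|\partial_{xxxy}{\bf U}\|_{L^2}$ or a third tangential derivative, neither of which appears in $N(t)$. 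The functional $Q(t)$ carries $\int_0^t\|\varepsilon\partial_x{\bf u}\|_{L^\infty}^2\,ds$ but no second tangential derivatives in $L^\infty$. Hence the bound $|\partial_x v^\varepsilon/y|\lesssim\|\partial_{xx}u^\varepsilon\|_{L^\infty}$ cannot be fed back into the loop, and the additional vanishing $\partial_{xy}v^\varepsilon|_{y=0}=0$ that you invoke does not help (a second-order Taylor expansion produces a factor of $y$ that is unbounded for large $y$).

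The paper circumvents this by \emph{splitting} $\partial_x v^\varepsilon=\partial_x v^a+\varepsilon^{3/2}\partial_x v$ and handling the two pieces differently. For the approximate part one has $|\partial_x v^a/y|\le\|\partial_{xx}u^a\|_{L^\infty}=O(1)$, since tangential derivatives of the constructed profiles are uniformly bounded; this piece is then paired with $y\partial_y\rho^\varepsilon$. For the error part the prefactor is redistributed as $\varepsilon^{3/2}\partial_x v\cdot\partial_y\rho^\varepsilon=(\varepsilon\partial_x v)(\sqrt{\varepsilon}\partial_y\rho^\varepsilon)$, and $\int_0^t\|\varepsilon\partial_x v\|_{L^\infty}^2\,ds\le Q(t)$ is exactly one of the quantities stored in \eqref{3.26}. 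This is why $\sqrt{\varepsilon}\partial_y\rho^\varepsilon$ must enter the coupled system on equal footing with $\partial_x\rho^\varepsilon$ and $y\partial_y\rho^\varepsilon$: it absorbs the part of the source that the H\"ardy trick cannot reach. Your proposal lacks this split, and the closure you describe does not go through at the regularity encoded in $N(t)$.
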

The proof will be presented in Appendix A as well. The main goal of this Section is to establish the following a priori energy estimates, whose proof is included in the subsequent subsections.
\begin{theorem}
\label{T3.7}
%For $N(t)$ defined in \eqref{3.23},
The initial boundary value problem \eqref{3.9} admits a unique solution $(\tilde{\rho}, {\bf{U}})(t, x, y)$,  and it holds that
\begin{align}\label{3.30}
N(t)+\varepsilon\int_0^t \|\nabla p^\varepsilon(s)\|_{L^2}^2\;ds\le C.
\end{align}
\end{theorem}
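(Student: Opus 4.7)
The plan is to exploit the symmetric structure of \eqref{3.9} and run a hierarchy of weighted energy estimates. After left-multiplying the ${\bf U}$-equation by the positive diagonal matrix ${\bf D}$ from \eqref{3.16}, the principal coefficients ${\bf DA}_i$ become symmetric (see \eqref{3.17}--\eqref{3.18}) and ${\bf DB}^\varepsilon$ is uniformly positive definite by \eqref{3.20}--\eqref{3.21}. Testing against ${\bf U}$, the transport terms reduce to a divergence that either vanishes in the interior or on the boundary using $(\tilde u,\tilde v,\partial_y\tilde h,\tilde g)|_{y=0}={\bf 0}$; the Laplacian supplies the dissipation $\varepsilon\int|\nabla {\bf U}|^2$; the pressure drops by $\nabla\cdot{\bf u}=0$; the lower-order $\sqrt\varepsilon {\bf A}_i^p$ contributions and the forcing ${\bf E}^\varepsilon$ are absorbed via Proposition \ref{P3.2}; and the zero-order term ${\bf C}^\varepsilon$ via Proposition \ref{vector}. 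In parallel, the density equation in \eqref{3.9} is tested by $\tilde\rho$, using \eqref{3.25} to bound ${\bf u}^\varepsilon\cdot\nabla$ contributions, while the coupling $c^p\kappa\varepsilon(\partial_y\tilde h-\partial_x\tilde g)$ is distributed with the ${\bf U}$-dissipation and absorbed.

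For the higher-order part of $N(t)$ I apply $\partial_\tau^\alpha$ with $|\alpha|=1,2$ to \eqref{3.9}, weight by $\varepsilon^{|\alpha|/2}$, and repeat the symmetrized argument. The commutators $[\partial_\tau^\alpha,{\bf S}^\varepsilon]$, $[\partial_\tau^\alpha,{\bf A}_i^\varepsilon]$, etc., expand into products of derivatives of $\rho^\varepsilon$, $a^p$, ${\bf u}^\varepsilon$, ${\bf H}^\varepsilon$ paired with derivatives of the unknowns; each product is controlled in $L^2$ by H\"older, anisotropic Sobolev embedding in the tangential variables, the $L^\infty$ bounds of Lemma \ref{L3.6}, Corollary \ref{C3.5} and the a priori assumption \eqref{3.24}, with careful $\varepsilon$-accounting so that every factor $\partial_y$ falling on a boundary-layer profile costs at most $\varepsilon^{-1/2}$, which is then paid by the prefactor $\varepsilon^{|\alpha|/2}$. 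The conormal estimate of $y\partial_y{\bf U}$ at weight $\sqrt\varepsilon$ is obtained by applying $y\partial_y$ to the symmetrized system and invoking the last bound in \eqref{3.15} together with Hardy's inequality; this is also where the conormal estimate of $\tilde\rho$ appearing in $N(t)$ is coupled. The pressure contribution $\varepsilon\int\|\nabla p^\varepsilon\|_{L^2}^2\,ds$ is finally read off by taking the divergence of the velocity equation in \eqref{1.1}, which gives an elliptic problem whose source is already controlled by the previous steps.

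The main obstacle is handling the density without ever producing an unbounded $\partial_y\tilde\rho$, whose $L^2$ norm would scale like $\varepsilon^{-1}$. The decisive observation, flagged in items (b)--(c) of the introduction, is that $\partial_y\rho^\varepsilon$ only ever appears paired with factors carrying $a^p g^\varepsilon$ or similar quantities that vanish at $y=0$; Hardy's trick then trades the vanishing weight for a $y\partial_y$, and although $\partial_y u^a=O(\varepsilon^{-1/2})$ the combination $y\partial_y u^a$ is $O(1)$. This is exactly why $\|y\partial_y\tilde\rho\|_{L^2}$ (not $\|\partial_y\tilde\rho\|_{L^2}$) enters $N(t)$ and why the $y\partial_y{\bf U}$ estimate is mandatory rather than optional. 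Combined with Lemma \ref{L3.6} bounding $\|\mathcal{Z}\rho^\varepsilon\|_{L^\infty}$ polynomially in $N$, the collected energy inequalities assemble into $N(t)+\varepsilon\int_0^t\|\nabla p^\varepsilon\|_{L^2}^2\,ds\le C+C\int_0^t\mathcal{P}(N(s))\,ds$, and a standard continuity/bootstrap argument simultaneously validates \eqref{3.24} and produces a uniform $T_*>0$ on which \eqref{3.30} holds. Local existence of $(\tilde\rho,{\bf U})$ for each fixed $\varepsilon$ follows from the quasilinear theory applied to the symmetric system \eqref{3.9}, and the uniform bound extends this solution to $[0,T_*]$ independently of $\varepsilon$.
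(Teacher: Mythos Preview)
Your framework is the paper's: symmetrize via ${\bf D}$, run weighted energy estimates on $\partial_\tau^\alpha {\bf U}$ for $|\alpha|\le 2$ and on $y\partial_y{\bf U}$, couple with the $\tilde\rho$-equation using Hardy's trick, recover the pressure elliptically (Propositions \ref{P3.8}--\ref{P3.13}, Corollary \ref{Cor3.14}), and close by taking the auxiliary parameter and $t$ small. The paper's closing inequality has the shape $N(t)\le C+(C_\lambda t+\lambda)\mathcal P(N(t))$ rather than your integral form, but the bootstrap closure is equivalent.

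There is, however, a genuine gap in your second-order step. You assert that the commutators $[\partial_\tau^\alpha,{\bf S}^\varepsilon]\partial_t{\bf U}$ for $|\alpha|=2$ are controlled via Lemma \ref{L3.6}, but that lemma only bounds $\|\mathcal Z\rho^\varepsilon\|_{L^\infty}$ for \emph{first}-order conormal $\mathcal Z$; it gives nothing for $\partial_{\tau\tau}\rho^\varepsilon$. The term $\partial_{\tau\tau}\rho^\varepsilon\cdot\partial_t\tilde u$ is exactly the ``most involved'' obstruction flagged in item (d) of the introduction, and it is the central new difficulty over the homogeneous case \cite{LXY192}. The paper's resolution (Proposition \ref{P3.12}) is not a direct H\"older/Sobolev bound: for $\partial_{tx}$ or $\partial_{xx}$ one integrates by parts in $x$ to shed a derivative from $\rho^\varepsilon$; for $\partial_{tt}$ one substitutes the continuity equation $\partial_t\rho^\varepsilon=-{\bf u}^\varepsilon\cdot\nabla\rho^\varepsilon$ to trade $\partial_{tt}\rho^\varepsilon$ for spatial derivatives, and then integrates by parts in $y$. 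This last move produces terms such as $v^\varepsilon\partial_t\rho^\varepsilon\cdot\kappa\varepsilon a^p\partial_{yyy}\tilde h$ and $v^\varepsilon\partial_t\rho^\varepsilon\cdot(v^\varepsilon+a^pg^\varepsilon)\partial_{yy}\tilde u$, which are only tamed because (i) $\varepsilon^4\!\int\|\partial_{yyy}\tilde h\|_{L^2}^2$ and $\varepsilon^3\!\int\|\partial_{yy}\tilde u\|_{L^2}^2$ sit inside $Q(t)$ via Lemma \ref{L3.4}, obtained by reading $\partial_{yy}\tilde h$ directly off the equation at the cost of $\varepsilon^{-1}$ rather than by energy, and (ii) the dissipation $\varepsilon^2\!\int\|\nabla(y\partial_y{\bf U})\|_{L^2}^2$ from the conormal estimate absorbs the $y\partial_{yy}\tilde u$ contribution. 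Your sketch mentions the conormal estimate but attributes its necessity only to the density coupling; in fact it is equally indispensable for closing this second-order commutator. Without this chain the $|\alpha|=2$ estimate does not close, and neither H\"older nor Lemma \ref{L3.6} alone will rescue it.
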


\subsection{Energy estimates of $\tilde{\rho}$ and ${\bf{U}}$}
This subsection is devoted to establishing the $L^2$ energy estimates of $\tilde{\rho}$ and ${\bf{U}}$.
\begin{proposition}
\label{P3.8}
For any fixed constant $\delta>0$ which satisfies \eqref{3.19}-\eqref{3.21}, there exists a unique solution $(\tilde{\rho}, {\bf{U}})(t, x, y)$ to the system \eqref{3.9}, such that, for any sufficiently small $\lambda>0$, it holds that
\begin{align}\label{3.31}
\|\tilde{\rho}(t, \cdot)\|_{L^2}+\|{\bf{U}}(t, \cdot)\|_{L^2}^2+\varepsilon\int_0^t \|\nabla {\bf{U}}(s, \cdot)\|_{L^2}^2\;ds\le
(\lambda+C_\lambda t)\mathcal{P}(N(t)),
\end{align}
where the positive constant $C_\lambda$ is independent of $\varepsilon$.
\end{proposition}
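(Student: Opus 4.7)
My plan is to test the $\tilde\rho$-equation against $\tilde\rho$ and the ${\bf U}$-equation against the symmetrizer ${\bf D}{\bf U}$ introduced in \eqref{3.16}, and then add the two identities. The resulting bound $(\lambda+C_\lambda t)\mathcal P(N(t))$ will arise from Young's inequality applied to a singular coupling between $\tilde\rho$ and $\nabla{\bf U}$, together with time integrals of $L^\infty$ coefficient norms controlled through Lemma~\ref{L3.6} and Corollary~\ref{C3.5}.

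\textbf{Transport estimate for $\tilde\rho$.} Multiplying the first equation in \eqref{3.9} by $\tilde\rho$ and integrating, the convective term drops out thanks to $\nabla\cdot{\bf u}^\varepsilon=0$ and $v^\varepsilon|_{y=0}=0$. The delicate term is $c^p\kappa\varepsilon(\partial_y\tilde h-\partial_x\tilde g)$: because $c^p=\partial_y\rho^p/h^p$ inherits an $\varepsilon^{-1/2}$ factor from the fast variable, one has $\varepsilon c^p=O(\sqrt\varepsilon)$ uniformly, so Cauchy--Schwarz and Young's inequality bound its contribution by $\lambda\varepsilon\|\nabla{\bf U}\|_{L^2}^2+C_\lambda\|\tilde\rho\|_{L^2}^2$, the first half being absorbed later into the diffusion on the ${\bf U}$ side. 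The contributions from ${\bf C}^0$ and ${\bf E}^0$ are handled by Propositions~\ref{vector} and~\ref{P3.2}.

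\textbf{Symmetrized estimate for ${\bf U}$.} Taking the $L^2$ inner product of the second equation in \eqref{3.9} with ${\bf D}{\bf U}$, the symmetry of ${\bf D}{\bf A}_1,{\bf D}{\bf A}_2$ together with the divergence-free conditions eliminates the top-order convective terms; the commutators involving $\partial_\tau{\bf D}$ and $\partial_t{\bf S}^\varepsilon=\mathrm{diag}(\partial_t\rho^\varepsilon,\partial_t\rho^\varepsilon,0,0)$ are controlled by $\|(\partial_t\rho^\varepsilon,\nabla\rho^\varepsilon)\|_{L^\infty}\|{\bf U}\|_{L^2}^2\lesssim(1+\mathcal P(N))\|{\bf U}\|_{L^2}^2$ through Lemma~\ref{L3.6}. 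The perturbation $\sqrt\varepsilon{\bf A}_i^p\partial_i{\bf U}$ is absorbed by Young's inequality using Proposition~\ref{P3.2}. The pressure term disappears because the transformation \eqref{3.6} preserves $\nabla\cdot\tilde{\bf u}=0$ together with $\tilde v|_{y=0}=0$. Integration by parts on the diffusion yields $\varepsilon({\bf D}{\bf B}^\varepsilon\nabla{\bf U},\nabla{\bf U})\ge\delta\varepsilon\|\nabla{\bf U}\|_{L^2}^2$ by \eqref{3.20}. The boundary integral on $\{y=0\}$ vanishes because $a^p|_{y=0}=\zeta(0)u^p(t,x,0)/h^p(t,x,0)=0$ reduces ${\bf B}^\varepsilon$ on the boundary to the diagonal matrix $\mathrm{diag}(\mu,\mu,\kappa,\kappa)$, which is annihilated by the trace conditions $\tilde u|_{y=0}=\tilde v|_{y=0}=\tilde g|_{y=0}=0$ and $\partial_y\tilde h|_{y=0}=0$. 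Finally, ${\bf E}^\varepsilon$ and ${\bf C}^\varepsilon$ are bounded via Propositions~\ref{P3.2} and~\ref{vector}.

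\textbf{Combining and closing.} Summing the two estimates and choosing the Young parameter small enough to absorb the $c^p$ coupling into $\tfrac12\delta\varepsilon\|\nabla{\bf U}\|_{L^2}^2$ gives
\begin{align*}
\|\tilde\rho(t)\|_{L^2}^2+\|{\bf U}(t)\|_{L^2}^2+\tfrac12\delta\varepsilon\int_0^t\|\nabla{\bf U}\|_{L^2}^2\,ds\lesssim \int_0^t\bigl(1+\mathcal P(N(s))\bigr)\bigl(\|\tilde\rho\|_{L^2}^2+\|{\bf U}\|_{L^2}^2\bigr)\,ds+\lambda\,\mathcal P(N(t))+C_\lambda\,t\,\mathcal P(N(t)).
\end{align*}
Since the integrand on the right is bounded by $(1+\mathcal P(N(t)))N(s)$ and $N$ is non-decreasing in time, the first contribution is dominated by $Ct\,\mathcal P(N(t))$, which absorbed into $C_\lambda t\,\mathcal P(N(t))$ yields the advertised inequality (the unsquared $\tilde\rho$-norm in the statement is a weaker consequence of the squared estimate). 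The main obstacle I anticipate is the singular $c^p$ coupling, whose $\varepsilon^{-1/2}$ behaviour forces a delicate balance between the Young parameter $\lambda$ and the available dissipation coefficient $\delta$; a secondary but essential check is that the identity $a^p|_{y=0}=0$ genuinely cancels every boundary contribution arising from the off-diagonal entries of ${\bf B}^\varepsilon$ in view of the mixed boundary condition on $\tilde h$.
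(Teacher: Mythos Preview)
Your overall architecture---testing against $\tilde\rho$ and ${\bf D}{\bf U}$ and summing---is the same as the paper's. The genuine gap is in your ``Symmetrized estimate for ${\bf U}$'' step. After integration by parts on the symmetric convective pieces $\langle{\bf D}{\bf A}_i\,\partial_i{\bf U},{\bf U}\rangle$, you do not ``eliminate'' them: you produce $-\tfrac12\langle\partial_i({\bf D}{\bf A}_i){\bf U},{\bf U}\rangle$, and these coefficient-derivative terms are precisely where the paper's $\lambda$ originates. You only discuss the time commutator $\partial_t({\bf D}{\bf S}^\varepsilon)$ via Lemma~\ref{L3.6}, but $\partial_x({\bf D}{\bf A}_1)$ contains $\partial_x(u^\varepsilon,h^\varepsilon)$, whose $L^\infty$ norm is \emph{not} bounded pointwise in time by $\mathcal P(N)$; only its time integral is, through $Q(t)$ and Corollary~\ref{C3.5}. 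In the paper one applies Young's inequality here to get $\lambda\int_0^t\|\partial_x({\bf u}^\varepsilon,{\bf H}^\varepsilon)\|_{L^\infty}^2\,ds+C_\lambda\int_0^t\|{\bf U}\|_{L^2}^4\,ds\le\lambda Q(t)+C_\lambda t\,\mathcal P(N(t))$, and it is this $\lambda Q(t)$ that becomes the $\lambda\mathcal P(N(t))$ in \eqref{3.31}. Your attribution of the $\lambda$ to the $c^p$ coupling is incorrect: that term is absorbed into the diffusion with a fixed fraction of $\delta$ (see \eqref{344}) and leaves no $\lambda$-residue on the right.

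The term $\partial_y({\bf D}{\bf A}_2)$ is more delicate still and you do not address it. It contains $\partial_y\rho^\varepsilon\cdot(v^\varepsilon+a^pg^\varepsilon)$, and $\partial_y\rho^\varepsilon$ is $O(\varepsilon^{-1/2})$ because of the strong density boundary layer. The paper resolves this via the Hardy trick: write $\partial_y\rho^\varepsilon\cdot v^\varepsilon=(y\partial_y\rho^\varepsilon)\cdot\frac{v^\varepsilon}{y}$, then use $\|y\partial_y\rho^\varepsilon\|_{L^\infty}\lesssim 1+N+N^2$ from Lemma~\ref{L3.6} together with $\|v^\varepsilon/y\|_{L^\infty}\lesssim\|\partial_x u^\varepsilon\|_{L^\infty}$ (divergence-free plus Hardy). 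Without this mechanism the $I_3$-type term in the paper's proof (cf.\ \eqref{3.38}) cannot be closed at order one, and your combined inequality as written does not follow.
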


\begin{proof}
%First, we establish the estimate of ${\bf{U}}$.
Multiplying the second equation in \eqref{3.9} by ${\bf{D}}$, taking the inner product with ${\bf{U}}$ and integrating the resulting equality over $[0, t]$, we have
\begin{align}\label{3.32}
&\frac12\langle {\bf{DS}}^\varepsilon{\bf{U}}, {\bf{U}}\rangle+\int_0^t\langle {\bf{DA}}_1^\varepsilon\partial_x{\bf{U}}, {\bf{U}}\rangle\;ds+\int_0^t\langle {\bf{DA}}_2^\varepsilon\partial_y{\bf{U}}, {\bf{U}}\rangle\;ds\notag\\
&\quad+\int_0^t\langle {\bf{DC}}^\varepsilon, {\bf{U}}\rangle\;ds
+\int_0^t \langle (p_x, p_y, 0, 0)^T, {\bf{U}}\rangle\;ds-\int_0^t \varepsilon\langle {\bf{DB}}^\varepsilon\Delta{\bf{U}}, {\bf{U}}\rangle\;ds\notag\\
&\quad=\frac12\|\sqrt{{\bf{DS}}^\varepsilon}{\bf{U}}(0, \cdot)\|_{L^2}^2+\frac12\int_0^t \langle\partial_t({\bf{DS}}^\varepsilon){\bf{U}}, {\bf{U}}\rangle\;ds+\int_0^t \langle{\bf{DE}}^\varepsilon, {\bf{U}}\rangle\;ds.
\end{align}
Next, we will handle term by term. By integration by parts and the divergence free condition, we obtain
\begin{align}\label{3.33}
\langle (p_x, p_y, 0, 0)^T, {\bf{U}}\rangle=0.
\end{align}
Since ${\bf{DS}}^\varepsilon$ is positive definite,
%\eqref{3.21}
it follows that
\begin{align}\label{3.34}
\langle {\bf{DS}}^\varepsilon{\bf{U}}, {\bf{U}}\rangle\ge c_\delta\|{\bf{U}}(t, \cdot)\|_{L^2}^2.
\end{align}
To estimate the second term on the left hand side of \eqref{3.32}, it is helpful to separate it into two parts.
\begin{align*}
\int_0^t\langle {\bf{DA}}_1^\varepsilon\partial_x{\bf{U}}, {\bf{U}}\rangle\;ds=\int_0^t \langle({\bf{DA}}_1+{\bf{DA}}_1^p)\partial_x{\bf{U}}, {\bf{U}}\rangle\;ds=:I_1+I_2.
\end{align*}
By \eqref{3.14}, $I_2$ can be estimated directly.
\begin{align}\label{3.35}
|I_2|\le& \int_0^t \|{\bf{DA}}_1^p\|_{L^\infty} \sqrt{\varepsilon}\|\partial_x{\bf{U}}\|_{L^2}\|{\bf{U}}\|_{L^2}\;ds\notag\\
\le&\frac{\delta\varepsilon}{8}\int_0^t \|\nabla{\bf{U}}\|^2_{L^2}\;ds+C\int_0^t \|{\bf{U}}\|_{L^2}^2\;ds.
\end{align}
To estimate $I_1$, we use the formula of \eqref{3.18} and the integration by parts. And $\partial_x({\bf{DA}}_1)$ has the following expression,
\begin{align*}
\partial_x({\bf{D}}{\bf{A}}_1)&=\partial_x
\left(
\begin{array}{cc}
\rho^\varepsilon(u^\varepsilon+a^ph^\varepsilon) {\bf{I}}_{2\times 2}& [\rho^\varepsilon(a^p)^2-1]h^\varepsilon{\bf{I}}_{2\times2}\\
{[}\rho^\varepsilon(a^p)^2-1{]}h^\varepsilon{\bf{I}}_{2\times2}&  [1-\rho^\varepsilon(a^p)^2](u^\varepsilon-a^ph^\varepsilon){\bf{I}}_{2\times 2}
\end{array}
\right).
\end{align*}
Taking \eqref{3.25} and \eqref{3.29} into account, we deduce that
\begin{align*}
\|\partial_x({\bf{DA}}_1)\|_{L^\infty}\lesssim 1+N(t)+N^2(t)+\|\partial_x({\bf{u}}^\varepsilon, {\bf{H}}^\varepsilon)\|_{L^\infty}.
\end{align*}
From which we obtain for any $\lambda>0$ sufficiently small, that
\begin{align}\label{3.36}
|I_1|\le&\lambda\int_0^t \|\partial_x({\bf{u}}^\varepsilon, {\bf{H}}^\varepsilon)\|_{L^\infty}^2\;ds+C_\lambda\int_0^t \left(1+N(t)+N^2(t)+\|{\bf{U}}\|^2_{L^2}\right){\|\bf{U}}\|^2_{L^2}\;ds\notag\\
\le& \lambda Q(t)+C_\lambda\int_0^t \left(1+N(t)+N^2(t)+\|{\bf{U}}\|^2_{L^2}\right){\|\bf{U}}\|^2_{L^2}\;ds.
\end{align}
We continue to estimate the third one on the left hand side of \eqref{3.32}, which is also divided into two parts.
\begin{align*}
\int_0^t\langle {\bf{DA}}_2^\varepsilon\partial_y{\bf{U}}, {\bf{U}}\rangle\;ds=\int_0^t \langle({\bf{DA}}_2+{\bf{DA}}_2^p)\partial_y{\bf{U}}, {\bf{U}}\rangle\;ds=:I_3+I_4.
\end{align*}
By the same trick as \eqref{3.35}, it yields that
\begin{align}\label{3.37}
|I_4|\le& \int_0^t \|{\bf{DA}}_2^p\|_{L^\infty} \sqrt{\varepsilon}\|\partial_y{\bf{U}}\|_{L^2}\|{\bf{U}}\|_{L^2}\;ds\notag\\
\le&\frac{\delta\varepsilon}{8}\int_0^t \|\nabla{\bf{U}}\|^2_{L^2}\;ds+C\int_0^t \|{\bf{U}}\|_{L^2}^2\;ds.
\end{align}
Notice that
\begin{align*}
\partial_y({\bf{D}}{\bf{A}}_{2})&=
\partial_y\left(
\begin{array}{cc}
\rho^\varepsilon (v^\varepsilon +a^pg^\varepsilon) {\bf{I}}_{2\times 2}& [\rho^\varepsilon(a^p)^2-1]g^\varepsilon{\bf{I}}_{2\times2}\\
{[}\rho^\varepsilon(a^p)^2-1{]}g^\varepsilon{\bf{I}}_{2\times2}&  [1-\rho^\varepsilon(a^p)^2](v^\varepsilon-a^pg^\varepsilon){\bf{I}}_{2\times 2}
\end{array}
\right),
\end{align*}
we only show the estimate of $\partial_y\left(\rho^\varepsilon (v^\varepsilon+ a^pg^\varepsilon)\right)$ because other terms can be treated by the same arguments. Since
\begin{align*}
\partial_y\left(\rho^\varepsilon (v^\varepsilon+a^pg^\varepsilon)\right)=(y\partial_y\rho^\varepsilon )\left(a^p\frac{g^\varepsilon}{y}+\frac{v^\varepsilon}{y}\right)+\rho^\varepsilon(y\partial_ya^p)\frac{g^\varepsilon}{y}+\rho^\varepsilon (\partial_yv^\varepsilon+a^p \partial_yg^\varepsilon).
\end{align*}
It follows from \eqref{3.29} that
\begin{align*}
\|y\partial_y\rho^\varepsilon\|_{L^\infty}\lesssim 1+N(t)+N^2(t),\quad \|(\rho^\varepsilon, y\partial_y a^p,a^p)\|_{L^\infty}=O(1).
\end{align*}
Then by H\"ardy's trick and the divergence free conditions, we immediately get
\begin{align*}
\|\partial_y\left(\rho^\varepsilon(v^\varepsilon+ a^pg^\varepsilon)\right)\|_{L^\infty}\le C\left(1+N(t)+N^2(t)\right)\left(\|\partial_xu^\varepsilon\|_{L^\infty}+\|\partial_xh^\varepsilon\|_{L^\infty}\right).
\end{align*}
From which and integration by parts, we infer that
\begin{align}\label{3.38}
|I_3|\le \lambda Q(t)+C_\lambda\int_0^t \left(1+N^2(t)+N^4(t)\right)\|{\bf{U}}\|^4_{L^2}\;ds.
\end{align}
Also, using the fact that $\|\partial_t\rho^\varepsilon\|_{L^\infty}\lesssim 1+N(t)+N^2(t)$, it holds
\begin{align}\label{3.39}
\left|\int_0^t \langle\partial_t({\bf{DS}}^\varepsilon){\bf{U}}, {\bf{U}}\rangle\;ds\right|\le C\int_0^t \left(1+N(t)+N^2(t)\right)\|{\bf{U}}\|^2_{L^2}\;ds.
\end{align}
Next, thanks to Propositions $\ref{P3.2}$ and $\ref{vector}$, we deduce that
\begin{align}\label{3.40}
\left|\int_0^t\langle {\bf{DE}}^\varepsilon-{\bf{DC}}^\varepsilon, {\bf{U}}\rangle\;ds\right|\le C\int_0^t \|{\bf{U}}\|^2_{L^2}\;ds+Ct.
\end{align}
Finally, it remains to estimate the diffusion term. By integration by parts again, we have
\begin{align*}
-\varepsilon\langle {\bf{DB}}^\varepsilon\Delta{\bf{U}}, {\bf{U}}\rangle=\varepsilon\langle {\bf{DB}}^\varepsilon\nabla{\bf{U}}, \nabla{\bf{U}}\rangle+\varepsilon\langle {\nabla(\bf{DB}}^\varepsilon)\nabla{\bf{U}}, {\bf{U}}\rangle=:I_5+I_6.
\end{align*}
On one hand, recall \eqref{3.20}, it yields that
\begin{align}\label{3.41}
I_5\ge\delta\varepsilon\|\nabla{\bf{U}}\|_{L^2}^2.
\end{align}
On the other hand, it is direct to verify that
\begin{align*}
\sqrt{\varepsilon}\|\nabla({\bf{DB}}^\varepsilon)\|_{L^\infty}\lesssim 1+N(t)+N^2(t),
\end{align*}
which implies that
\begin{align}\label{3.42}
|I_6|\le \frac{\delta\varepsilon}{8}\|\nabla{\bf{U}}\|_{L^2}^2+C\left(1+N^2(t)+N^4(t)\right)\|{\bf{U}}\|^2_{L^2}.
\end{align}
Below, we start to establish the estimate of $\tilde{\rho}$. Taking inner product on the first equation in \eqref{3.9} with $\tilde{\rho}$, integrating it over $[0, t]$, we obtain by integration by parts and Proposition \ref{vector} that
\begin{align}\label{344}
\|\tilde{\rho}\|_{L^2}^2\le C\int_0^t \|\tilde{\rho}\|_{L^2}^2\;ds+\frac{\delta\varepsilon}{8}\int_0^t \|\nabla{\bf{U}}\|_{L^2}^2\;ds+C\int_0^t \|{\bf{U}}\|_{L^2}^2\;ds+Ct.
\end{align}
Plugging \eqref{3.33}-\eqref{344} into \eqref{3.32}, we arrive at
\begin{align}\label{3.43}
\|\tilde{\rho}\|_{L^2}^2&+\|{\bf{U}}(t, \cdot)\|_{L^2}^2+\varepsilon\int_0^t \|\nabla {\bf{U}}(s, \cdot)\|_{L^2}^2\;ds\notag\\
&\le C_\lambda\int_0^t (1+N(t)+N^2(t)+N^3(t)+N^4(t))(\|{\bf{U}}\|_{L^2}^2+\|{\bf{U}}\|_{L^2}^4)ds\notag\\
&\quad+2\lambda Q(t)+C\int_0^t \|\tilde{\rho}\|_{L^2}^2ds+Ct\notag\\
&\lesssim (C_\lambda t+\lambda)\mathcal{P}(N(t)),
\end{align}
where Lemma $\ref{L3.4}$ is used in the last line. Thus the proof of Proposition $\ref{P3.8}$ is done.
\end{proof}

\subsection{First order tangential derivative estimates of ${\bf{U}}$}
This subsection is devoted to establishing the energy estimates of $\partial_\tau{\bf{U}}$.
\begin{proposition}
\label{P3.9}
For any fixed constant $\delta>0$ which satisfies \eqref{3.19}-\eqref{3.21}, there exists a unique solution ${\bf{U}}(t, x, y)$ to the system \eqref{3.9}, it holds that for any sufficiently small $\lambda>0$,
\begin{align}\label{3.44}
\|\sqrt{\varepsilon}\partial_\tau{\bf{U}}(t, \cdot)\|_{L^2}^2+\varepsilon^2\int_0^t \|\nabla {\bf{U}}_\tau(s, \cdot)\|_{L^2}^2\;ds\le C+(C_\lambda t+\lambda)\mathcal{P}(N(t)),
\end{align}
where the positive constant $C$  is independent of $\varepsilon$.
\end{proposition}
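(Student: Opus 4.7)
The plan is to differentiate the momentum/induction equation in \eqref{3.9} by a tangential derivative $\partial_\tau$ (with $\partial_\tau=\partial_t$ or $\partial_x$), symmetrise by left-multiplying with the matrix ${\bf{D}}$ from \eqref{3.16}, and take the $L^2$ inner product with $\varepsilon\,\partial_\tau{\bf{U}}$ before integrating over $[0,t]$. The weight $\varepsilon$ is dictated by the definition of $N(t)$: it produces the desired energy $\tfrac{\varepsilon}{2}\|\sqrt{{\bf{D}}{\bf{S}}^\varepsilon}\,\partial_\tau{\bf{U}}(t)\|_{L^2}^2$ (equivalent to $\|\sqrt{\varepsilon}\partial_\tau{\bf{U}}(t)\|_{L^2}^2$ by \eqref{3.21}), while integration by parts of $-\varepsilon^2\langle {\bf{D}}{\bf{B}}^\varepsilon\Delta\partial_\tau{\bf{U}},\partial_\tau{\bf{U}}\rangle$ contributes the dissipation $\delta\varepsilon^2\int_0^t\|\nabla\partial_\tau{\bf{U}}\|_{L^2}^2\,ds$ via \eqref{3.20}. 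The pressure contribution vanishes after integration by parts since $\nabla\cdot\partial_\tau{\bf{U}}=0$ and the normal components of $\partial_\tau{\bf{U}}$ vanish on $\{y=0\}$, exactly as in \eqref{3.33}; the diffusion boundary terms also vanish because the mixed boundary condition $(\tilde u,\tilde v,\partial_y\tilde h,\tilde g)|_{y=0}=0$ is preserved by $\partial_\tau$. The initial value $\partial_\tau{\bf{U}}|_{t=0}$ is zero if $\partial_\tau=\partial_x$; if $\partial_\tau=\partial_t$, reading \eqref{3.9} at $t=0$ together with $(\tilde\rho,{\bf{U}})|_{t=0}=0$ gives ${\bf{C}}^\varepsilon|_{t=0}=\nabla{\bf{U}}|_{t=0}=0$ and hence ${\bf{S}}^\varepsilon\partial_t{\bf{U}}|_{t=0}={\bf{E}}^\varepsilon|_{t=0}-\nabla p|_{t=0}$, which is $O(1)$ in $L^2$ by Proposition \ref{P3.2} and a standard elliptic estimate for the initial pressure; this produces the harmless constant $C$ in \eqref{3.44}.

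Most of the resulting terms can be controlled by the techniques already developed in Proposition \ref{P3.8}. The source $\varepsilon\langle{\bf{D}}\partial_\tau{\bf{E}}^\varepsilon,\partial_\tau{\bf{U}}\rangle$ is handled by Proposition \ref{P3.2}, and the zeroth-order piece $\varepsilon\langle{\bf{D}}\partial_\tau{\bf{C}}^\varepsilon,\partial_\tau{\bf{U}}\rangle$ by Proposition \ref{vector}, each yielding a contribution bounded by $(C_\lambda t+\lambda)\mathcal{P}(N(t))$ after Cauchy--Schwarz and time integration. The time-derivative commutator $\varepsilon\langle\partial_t({\bf{D}}{\bf{S}}^\varepsilon)\partial_\tau{\bf{U}},\partial_\tau{\bf{U}}\rangle$ and the ${\bf{S}}^\varepsilon$-commutator $\varepsilon\langle\partial_\tau\rho^\varepsilon\,\partial_t{\bf{U}},{\bf{D}}\partial_\tau{\bf{U}}\rangle$ are controlled through the $L^\infty$ bound $\|\partial_\tau\rho^\varepsilon\|_{L^\infty}\lesssim 1+Q(t)$ from Lemma \ref{L3.6}. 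The tangential convection commutator $\varepsilon\langle{\bf{D}}\partial_\tau{\bf{A}}_1^\varepsilon\cdot\partial_x{\bf{U}},\partial_\tau{\bf{U}}\rangle$ mirrors the treatment of $I_1$ in \eqref{3.36}, using Corollary \ref{C3.5} to absorb $\int_0^t\|\partial_\tau({\bf{u}}^\varepsilon,{\bf{H}}^\varepsilon)\|_{L^\infty}^2\,ds$ into $Q(t)\lesssim\mathcal{P}(N(t))$, while the $\sqrt{\varepsilon}$-weighted contribution from ${\bf{A}}_1^p$ is harmless by Proposition \ref{P3.2}. The diffusion commutator $-\varepsilon^2\langle[{\bf{D}}{\bf{B}}^\varepsilon,\partial_\tau]\Delta{\bf{U}},\partial_\tau{\bf{U}}\rangle$, after one integration by parts, splits into a piece absorbed by $\tfrac{\delta}{8}\varepsilon^2\|\nabla\partial_\tau{\bf{U}}\|_{L^2}^2$ and a piece dominated by $\mathcal{P}(N(t))\|\sqrt{\varepsilon}\partial_\tau{\bf{U}}\|_{L^2}^2$.

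The main obstacle is the normal convection commutator $\varepsilon\langle{\bf{D}}\partial_\tau{\bf{A}}_2^\varepsilon\cdot\partial_y{\bf{U}},\partial_\tau{\bf{U}}\rangle$, because $\|\partial_y{\bf{U}}\|_{L^2}$ is only of order $\varepsilon^{-1/2}$ in $N(t)$. Mirroring the treatment of $I_3$ in Proposition \ref{P3.8}, the rescue is that the entries of ${\bf{A}}_2$ containing $v^\varepsilon$ and $g^\varepsilon$ vanish on $\{y=0\}$, so $\partial_\tau v^\varepsilon$ and $\partial_\tau g^\varepsilon$ do as well. The H\"ardy trick then lets us rewrite $\partial_\tau v^\varepsilon\cdot\partial_y{\bf{U}}=(\partial_\tau v^\varepsilon/y)\cdot(y\partial_y{\bf{U}})$, with $\|\partial_\tau v^\varepsilon/y\|_{L^\infty}\lesssim\|\partial_\tau\partial_y v^\varepsilon\|_{L^\infty}=\|\partial_\tau\partial_x u^\varepsilon\|_{L^\infty}$ via the divergence-free condition, and analogously for $g^\varepsilon$; the $L^2$ norm of $y\partial_y{\bf{U}}$ is controlled by $\varepsilon^{-1/2}\sqrt{N(t)}$ through the conormal piece of $N(t)$, while $\|\partial_\tau\partial_x u^\varepsilon\|_{L^\infty}$ is absorbed into $Q(t)$-type quantities via Sobolev embedding on the approximate solution together with the higher-derivative pieces of $N(t)$. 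The remaining entries of $\partial_\tau{\bf{A}}_2^\varepsilon$ (involving $\rho^\varepsilon$, $h^\varepsilon$ or $a^p$, none of which vanishes on the boundary) are bounded in $L^\infty$ directly by Lemma \ref{L3.6}, producing contributions that are either absorbed by $\tfrac{\delta}{8}\varepsilon^2\|\nabla\partial_\tau{\bf{U}}\|_{L^2}^2$ or fall into $(C_\lambda t+\lambda)\mathcal{P}(N(t))$. Collecting all the contributions and invoking Lemma \ref{L3.4} to replace $Q(t)$ by $\mathcal{P}(N(t))$ then delivers \eqref{3.44}.
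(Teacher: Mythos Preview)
Your overall strategy---differentiate \eqref{3.9} by $\partial_\tau$, symmetrise with ${\bf D}$, pair with $\varepsilon\partial_\tau{\bf U}$, and reuse the machinery of Proposition~\ref{P3.8}---is exactly the paper's, and most of your term-by-term discussion is correct. Two steps, however, do not close as written.

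\textbf{The commutator $\varepsilon\langle{\bf D}\partial_\tau{\bf A}_2^\varepsilon\,\partial_y{\bf U},\partial_\tau{\bf U}\rangle$.} Your H\"ardy rewriting $(\partial_\tau v^\varepsilon/y)\cdot(y\partial_y{\bf U})$ forces you to control $\|\partial_\tau v^\varepsilon/y\|_{L^\infty}\lesssim\|\partial_{\tau x}u^\varepsilon\|_{L^\infty}$. For the approximate part $u^a$ this is $O(1)$, but for the error contribution $\varepsilon^{3/2}u$ it requires $\varepsilon^{3/2}\|\partial_{\tau x}u\|_{L^\infty}$, a second tangential derivative of the error in $L^\infty$; this is \emph{not} bounded by $N(t)$ or $Q(t)$ (the anisotropic Sobolev embedding you invoke would need third- and fourth-order tangential derivatives). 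The paper avoids this by splitting ${\bf A}_2^\varepsilon={\bf A}_2^a+\sqrt\varepsilon{\bf A}_2^p+\varepsilon^{3/2}{\bf A}_2^0$: the approximate piece $\partial_\tau{\bf A}_2^a$ is bounded in $L^\infty$ and pairs directly with $\sqrt\varepsilon\|\partial_y{\bf U}\|_{L^2}\cdot\sqrt\varepsilon\|\partial_\tau{\bf U}\|_{L^2}$ (giving $C\varepsilon\int_0^t\|\nabla{\bf U}\|_{L^2}^2\,ds$, already controlled by Proposition~\ref{P3.8}); the error piece carries an extra $\varepsilon^{3/2}$, so one only needs $\|\varepsilon\partial_x({\bf u},{\bf H})\|_{L^\infty}$ or $\|\varepsilon^{3/2}\partial_t({\bf u},{\bf H})\|_{L^\infty}$, both of which sit in $Q(t)$.

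\textbf{The diffusion commutator $-\varepsilon^2\langle{\bf D}\partial_\tau{\bf B}^\varepsilon\,\Delta{\bf U},\partial_\tau{\bf U}\rangle$.} Your ``one integration by parts'' on the $\partial_{yy}\tilde h$ part produces a term with $\partial_y\partial_\tau(\rho^\varepsilon a^p)$, hence $a^p\,\partial_{y\tau}\rho^\varepsilon$. This mixed second derivative of the density is not covered by Lemma~\ref{L3.6}, which only furnishes $\mathcal{Z}\rho^\varepsilon$ for $\mathcal{Z}\in\{\partial_t,\partial_x,y\partial_y,\sqrt\varepsilon\partial_y\}$. The paper does \emph{not} integrate by parts here: it keeps $\partial_{yy}\tilde h$ intact and uses the bound $\int_0^t\|\varepsilon^{3/2}\partial_{yy}\tilde h\|_{L^2}^2\,ds\le Q(t)$ (see \eqref{3.59} and Lemma~\ref{L3.4}), which is precisely the reason $Q(t)$ contains that quantity.
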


\begin{proof}
Applying the first order tangential derivative operator $\partial_\tau(\tau=t, x)$ on the second equation in \eqref{3.9}, multiplying it by ${\bf{D}}$ from the left and taking the inner product on it with $\varepsilon\partial_\tau{\bf{U}}$, and integrating the resulting equality over $[0, t]$, we have
\begin{align}\label{3.45}
&\frac12\langle {\bf{DS}}^\varepsilon\partial_\tau{\bf{U}}, \varepsilon\partial_\tau{\bf{U}}\rangle
+\int_0^t\langle {\bf{DA}}_1^\varepsilon\partial_x\partial_\tau{\bf{U}}, \varepsilon\partial_\tau{\bf{U}}\rangle\;ds
+\int_0^t\langle {\bf{DA}}_2^\varepsilon\partial_y\partial_\tau{\bf{U}}, \varepsilon\partial_\tau{\bf{U}}\rangle\;ds\notag\\
&\quad+\int_0^t\langle {\bf{D}}\partial_\tau{\bf{C}}^\varepsilon, \varepsilon\partial_\tau{\bf{U}}\rangle\;ds
+\int_0^t \langle (\partial_\tau p_x, \partial_\tau p_y, 0, 0)^T, \varepsilon\partial_\tau{\bf{U}}\rangle\;ds
-\int_0^t \varepsilon\langle {\bf{DB}}^\varepsilon\Delta\partial_\tau{\bf{U}}, \varepsilon\partial_\tau{\bf{U}}\rangle\;ds\notag\\
&\quad+\int_0^t \left\langle{\bf{D}}\left(\partial_\tau{\bf{S}}^\varepsilon\partial_t{\bf{U}}+\partial_\tau{\bf{A}}_1^\varepsilon\partial_x{\bf{U}}+\partial_\tau{\bf{A}}_2^\varepsilon\partial_y{\bf{U}}\right), \varepsilon\partial_\tau{\bf{U}}\right\rangle\;ds-
\int_0^t \varepsilon\langle {\bf{D}}\partial_\tau{\bf{B}}^\varepsilon\Delta{\bf{U}}, \varepsilon\partial_\tau{\bf{U}}\rangle\;ds\notag\\
&\quad=\frac12\varepsilon\|\sqrt{{\bf{DS}}^\varepsilon}\partial_\tau{\bf{U}}(0, \cdot)\|_{L^2}^2+\frac12\int_0^t \langle\partial_t({\bf{DS}}^\varepsilon)\partial_\tau{\bf{U}}, \varepsilon\partial_\tau{\bf{U}}\rangle\;ds
+\int_0^t \langle{\bf{D}}\partial_\tau{\bf{E}}^\varepsilon, \varepsilon\partial_\tau{\bf{U}}\rangle\;ds.
\end{align}
By integration by parts, the divergence free condition and \eqref{3.21}, one has
\begin{align}\label{3.46}
\langle (\partial_\tau p_x, \partial_\tau p_y, 0, 0)^T, \varepsilon\partial_\tau{\bf{U}}\rangle=0
\end{align}
and
\begin{align}\label{3.47}
\langle {\bf{DS}}^\varepsilon\partial_\tau{\bf{U}}, \varepsilon\partial_\tau{\bf{U}}\rangle\ge c_\delta\|\sqrt{\varepsilon}\partial_\tau{\bf{U}}(t, \cdot)\|_{L^2}^2.
\end{align}
Then we separate the second term in \eqref{3.45} into two parts.
\begin{align*}
\int_0^t\langle {\bf{DA}}_1^\varepsilon\partial_x\partial_\tau{\bf{U}}, \varepsilon\partial_\tau{\bf{U}}\rangle\;ds=\int_0^t \langle({\bf{DA}}_1+{\bf{DA}}_1^p)\partial_x\partial_\tau{\bf{U}},\varepsilon \partial_\tau{\bf{U}}\rangle\;ds=:J_1+J_2.
\end{align*}
With the same procedure as \eqref{3.35}, it holds
\begin{align}\label{3.48}
|J_2|\le\frac{\delta\varepsilon^2}{8}\int_0^t \|\nabla{\bf{U}}_\tau\|^2_{L^2}\;ds+C\int_0^t \|\sqrt{\varepsilon}\partial_\tau{\bf{U}}\|_{L^2}^2\;ds.
\end{align}
Also, integration by parts for $J_1$ implies that
\begin{align}\label{3.49}
|J_1|\le \lambda Q(t)+C\int_0^t (1+N(t)+N^2(t)+\|\sqrt{\varepsilon}\partial_\tau{\bf{U}}\|^2_{L^2}){\|\sqrt{\varepsilon}\partial_\tau\bf{U}}\|_{L^2}^2\;ds.
\end{align}
Similar derivation yields that
\begin{align}\label{3.50}
\left|\int_0^t\langle {\bf{DA}}_2^\varepsilon\partial_y\partial_\tau{\bf{U}}, \varepsilon\partial_\tau{\bf{U}}\rangle\;ds\right|&\le \frac{\delta\varepsilon^2}{8}\int_0^t \|\nabla{\bf{U}}_\tau\|^2_{L^2}\;ds+\lambda Q(t)+C\int_0^t \|\sqrt{\varepsilon}\partial_\tau{\bf{U}}\|_{L^2}^2\;ds\notag\\
&\quad +C_\lambda\int_0^t \left(1+N^2(t)+N^4(t)\right){\|\sqrt{\varepsilon}\partial_\tau\bf{U}}\|_{L^2}^4\;ds.
\end{align}
Next, thanks to Propositions $\ref{P3.2}$, $\ref{vector}$ and \eqref{3.29}, we also have
\begin{align}\label{3.51}
\left|\int_0^t\langle {\bf{D}}\partial_\tau{\bf{E}}^\varepsilon, \varepsilon\partial_\tau{\bf{U}}\rangle\;ds\right|\le C\int_0^t\|\sqrt{\varepsilon}\partial_\tau{\bf{U}}\|_{L^2}^2\;ds+Ct,
\end{align}
\begin{align}\label{3.52}
\left|\int_0^t \langle\partial_t({\bf{DS}}^\varepsilon)\partial_\tau{\bf{U}}, \varepsilon\partial_\tau{\bf{U}}\rangle\;ds\right|\le C\int_0^t \left(1+N(t)+N^2(t)\right)\|\sqrt{\varepsilon}\partial_\tau{\bf{U}}\|_{L^2}^2\;ds
\end{align}
and
\begin{align}\label{3.53}
\left|\int_0^t\langle {\bf{D}}\partial_\tau{\bf{C}}^\varepsilon, \varepsilon\partial_\tau{\bf{U}}\rangle\;ds\right|\le C\int_0^t (1+N^2(t)+N^4(t)){\|\bf{U}}\|_{L^2}^2\;ds+C\int_0^t \|\sqrt{\varepsilon}\partial_\tau{\bf{U}}\|_{L^2}^2\;ds.
\end{align}
As for the diffusion term, it follows from integration by parts that
\begin{align*}
-\varepsilon\langle {\bf{DB}}^\varepsilon\Delta\partial_\tau{\bf{U}}, \varepsilon\partial_\tau{\bf{U}}\rangle=\varepsilon\langle {\bf{DB}}^\varepsilon\nabla{\bf{U}}_\tau, \varepsilon\nabla{\bf{U}}_\tau\rangle+\varepsilon\langle {\nabla(\bf{DB}}^\varepsilon)\nabla{\bf{U}}_\tau, \varepsilon\partial_\tau{\bf{U}}\rangle.
\end{align*}
According to the arguments in \eqref{3.41} and \eqref{3.42}, it implies
\begin{align}\label{3.54}
-\varepsilon\langle {\bf{DB}}^\varepsilon\Delta\partial_\tau{\bf{U}}, \varepsilon\partial_\tau{\bf{U}}\rangle\ge \frac{7\delta\varepsilon^2}{8}\|\nabla{\bf{U}}_\tau\|^2_{L^2}-C\left(1+N^2(t)+N^4(t)\right)\|\sqrt{\varepsilon}\partial_\tau{\bf{U}}\|_{L^2}^2.
\end{align}
It remains to estimate the third line in \eqref{3.45}. First
\begin{align}\label{3.55}
&\left|\int_0^t \left\langle{\bf{D}}\left(\partial_\tau{\bf{S}}^\varepsilon\partial_t{\bf{U}}+\partial_\tau{\bf{A}}_1^\varepsilon\partial_x{\bf{U}}\right), \varepsilon\partial_\tau{\bf{U}}\right\rangle\;ds\right|\notag\\
\le &\lambda Q(t)+C_\lambda\int_0^t \left(1+N(t)+N^2(t)+\|\sqrt{\varepsilon}\partial_\tau{\bf{U}}\|^2_{L^2}\right){\|\sqrt{\varepsilon}\partial_\tau\bf{U}}\|_{L^2}^2\;ds.
\end{align}
To estimate the second term, rewrite ${\bf{A}}_2^\varepsilon$ as follows.
\begin{align*}
{\bf{A}}_2^\varepsilon={\bf{A}}_2^a+\sqrt{\varepsilon}{\bf{A}}_2^p+\varepsilon^{\frac32}{\bf{A}}_2^0,
\end{align*}
where
\begin{align*}
{\bf{A}}_2^a=\left(
\begin{array}{cc}
\rho^\varepsilon(v^a+a^pg^a) {\bf{I}}_{2\times 2}& [\rho^\varepsilon(a^p)^2-1]g^a{\bf{I}}_{2\times2}\\
-g^a{\bf{I}}_{2\times2}& (v^a-a^pg^a){\bf{I}}_{2\times 2}
\end{array}
\right)
\end{align*}
and
\begin{align*}
{\bf{A}}_2^0=\left(
\begin{array}{cc}
\rho^\varepsilon(v+a^pg) {\bf{I}}_{2\times 2}& [\rho^\varepsilon(a^p)^2-1]g{\bf{I}}_{2\times2}\\
-g{\bf{I}}_{2\times2}& (v-a^pg){\bf{I}}_{2\times 2}
\end{array}
\right).
\end{align*}
Direct calculations lead to
\begin{align}\label{3.56}
\left|\int_0^t \left\langle{\bf{D}}\left(\partial_\tau{\bf{A}}_2^a+\sqrt{\varepsilon}\partial_\tau{\bf{A}}_2^p\right)\partial_y{\bf{U}}, \varepsilon\partial_\tau{\bf{U}}\right\rangle\;ds\right|\le C\varepsilon\int_0^t \|\nabla{\bf{U}}\|_{L^2}^2\;ds+ C\int_0^t \|\sqrt{\varepsilon}\partial_\tau{\bf{U}}\|_{L^2}^2\;ds.
\end{align}
And for any $\lambda>0$ sufficiently small, we deduce that
\begin{align}\label{3.57}
&\left|\int_0^t \left\langle{\bf{D}}\partial_\tau{\bf{A}}_2^0\left(\varepsilon^{\frac32}\partial_y{\bf{U}}\right), \varepsilon\partial_\tau{\bf{U}}\right\rangle\;ds\right|\notag\\
\le& \lambda Q(t)+C_\lambda\int_0^t \left(1+N^2(t)+N^4(t)\right)\left({\|\bf{U}}\|_{L^2}^4+{\|\sqrt{\varepsilon}\partial_\tau\bf{U}}\|_{L^2}^4\right)\;ds.
\end{align}
Recall the definition of ${\bf{B}}^\varepsilon$, we find
\begin{align*}
-\int_0^t \varepsilon\langle {\bf{D}}\partial_\tau{\bf{B}}^\varepsilon\Delta{\bf{U}}, \varepsilon\partial_\tau{\bf{U}}\rangle\;ds=\kappa\int_0^t \varepsilon\langle\partial_\tau(\rho^\varepsilon a^p)\Delta\tilde{h}, \varepsilon\partial_\tau\tilde{u}\rangle\;ds+\kappa\int_0^t \varepsilon\langle\partial_\tau(\rho^\varepsilon a^p)\Delta\tilde{g}, \varepsilon\partial_\tau\tilde{v}\rangle\;ds.
\end{align*}
It is straightforward to obtain that
\begin{align}\label{3.58}
&\left|\kappa\int_0^t \varepsilon\langle\partial_\tau(\rho^\varepsilon a^p)\partial_{xx}\tilde{h}, \varepsilon\partial_\tau\tilde{u}\rangle\;ds+\kappa\int_0^t \varepsilon\langle\partial_\tau(\rho^\varepsilon a^p)\Delta\tilde{g}, \varepsilon\partial_\tau\tilde{v}\rangle\;ds\right|\notag\\
\le&\frac{\delta\varepsilon^2}{8}\int_0^t \|\nabla{\bf{U}}_\tau\|^2_{L^2}\;ds+C\int_0^t \left(1+N^2(t)+N^4(t)\right)\|\sqrt{\varepsilon}\partial_\tau{\bf{U}}\|_{L^2}^2\;ds.
\end{align}
And it follows from H\"older's inequality that
\begin{align}\label{3.59}
&\left|\kappa\int_0^t \varepsilon\langle\partial_\tau(\rho^\varepsilon a^p)\partial_{yy}\tilde{h}, \varepsilon\partial_\tau\tilde{u}\rangle\;ds\right|\notag\\
\le& \lambda\int_0^t \|\varepsilon^{\frac32}\partial_{yy}\tilde{h}\|_{L^2}^2\;ds+C\int_0^t \left(1+N^2(t)+N^4(t)\right)\|\sqrt{\varepsilon}\partial_\tau{\bf{U}}\|_{L^2}^2\;ds\notag\\
\le& \lambda Q(t)+C_\lambda\int_0^t \left(1+N^2(t)\right)\|\sqrt{\varepsilon}\partial_\tau{\bf{U}}\|_{L^2}^2\;ds.
\end{align}
Combine \eqref{3.46}-\eqref{3.59} together and in virtue of \eqref{3.27}, \eqref{3.31}, we conclude that
\begin{align}\label{3.60}
\|\sqrt{\varepsilon}\partial_\tau{\bf{U}}(t, \cdot)\|_{L^2}^2&+\varepsilon^2\int_0^t \|\nabla {\bf{U}}_\tau(s, \cdot)\|_{L^2}^2\;ds\notag\\
&\le C_\lambda\int_0^t \left(1+N(t)+N^2(t)+N^4(t)\right)(\|\sqrt{\varepsilon\partial_\tau{\bf{U}}}\|_{L^2}^2+\|\sqrt{\varepsilon}\partial_\tau{\bf{U}}\|_{L^2}^4)\;ds\notag\\
&\quad+5\lambda Q(t)+C(t+\lambda)\mathcal{P}(N(t))+C\notag\\
&\le C+(C_\lambda t+\lambda)\mathcal{P}(N(t)).
\end{align}
The proof of Proposition $\ref{P3.9}$ is done.
\end{proof}

\subsection{First order conormal derivative estimates of ${\bf{U}}$}
This subsection is devoted to establishing the energy estimates of $y\partial_y{\bf{U}}$. Denote $\mathcal{Z}_0=y\partial_y$ for representation convenience.
\begin{proposition}
\label{P3.10}
For any fixed constant $\delta>0$ which satisfies \eqref{3.19}-\eqref{3.21}, there exists a unique solution ${\bf{U}}(t, x, y)$ to the system \eqref{3.9}. Moreover, for any $\lambda>0$ small enough, it holds that
\begin{align}\label{3.61}
&\|\sqrt{\varepsilon}\mathcal{Z}_0{\bf{U}}(t, \cdot)\|_{L^2}^2+\varepsilon^2\int_0^t \|\nabla \mathcal{Z}_0{\bf{U}}(s, \cdot)\|_{L^2}^2\;ds\nonumber\\
\le& C+(C_\lambda t+\lambda)\mathcal{P}(N(t))+C\varepsilon\int_0^t (\|\partial_yp\|_{L^2}^2+\|\mathcal{Z}_0 p\|_{L^2}^2)\;ds,
\end{align}
where the positive constant $C$ is independent of $\varepsilon$.
\end{proposition}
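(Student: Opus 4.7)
The plan is to apply the conormal operator $\mathcal{Z}_0 = y\partial_y$ to the momentum/induction equation in \eqref{3.9}, multiply from the left by the symmetrizer $\mathbf{D}$, take the $L^2$ inner product with $\varepsilon\mathcal{Z}_0\mathbf{U}$, and integrate over $[0,t]$. This produces an identity structurally parallel to \eqref{3.45}, in which the time-derivative term yields $\tfrac{1}{2}\langle \mathbf{DS}^\varepsilon \mathcal{Z}_0\mathbf{U},\varepsilon\mathcal{Z}_0\mathbf{U}\rangle \ge c_\delta \|\sqrt{\varepsilon}\mathcal{Z}_0\mathbf{U}(t)\|_{L^2}^2$ by \eqref{3.20}. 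The crucial commutators are $[\mathcal{Z}_0,\partial_x]=0$, $[\mathcal{Z}_0,\partial_y]=-\partial_y$, and $[\mathcal{Z}_0,\Delta]=-2\partial_{yy}$; these will be the source of all the new (non-$\partial_\tau$) structure.

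The essential novelty, and the reason the pressure surcharge appears in \eqref{3.61}, lies in the pressure term. Since $\mathcal{Z}_0$ does not commute with $\nabla\cdot$, the trick $\langle \nabla p,\mathbf{u}\rangle=0$ from \eqref{3.46} is no longer directly available. Using $[\mathcal{Z}_0,\partial_y]=-\partial_y$ together with $\nabla\cdot\tilde{\mathbf{u}}=0$, one computes $\nabla\cdot(\mathcal{Z}_0\tilde{\mathbf{u}})=\partial_y\tilde{v}$, and integration by parts gives
\begin{align*}
\int\!\bigl(\mathcal{Z}_0\partial_x p\cdot\mathcal{Z}_0\tilde{u}+\mathcal{Z}_0\partial_y p\cdot\mathcal{Z}_0\tilde{v}\bigr)\,dxdy
=-\!\int \mathcal{Z}_0 p\,\partial_y\tilde{v}\,dxdy-\!\int \partial_y p\,\mathcal{Z}_0\tilde{v}\,dxdy.
\end{align*}
Multiplying by $\varepsilon$ and applying Young's inequality produces exactly $C\varepsilon\int_0^t(\|\mathcal{Z}_0 p\|_{L^2}^2+\|\partial_y p\|_{L^2}^2)\,ds$ plus a remainder $\frac{\varepsilon}{4}\int_0^t\|\partial_y\tilde{v}\|_{L^2}^2\,ds$ that is absorbed via Proposition \ref{P3.8} and a remainder $C\int_0^t\|\sqrt{\varepsilon}\mathcal{Z}_0\tilde{v}\|_{L^2}^2\,ds$ that is subsumed in the LHS energy. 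Boundary terms from the IBP vanish because $\mathcal{Z}_0 p|_{y=0}=0$ and $\tilde{v}|_{y=0}=0$.

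The diffusion term is expanded as $\varepsilon\mathcal{Z}_0(\mathbf{DB}^\varepsilon\Delta\mathbf{U})=\varepsilon\mathbf{DB}^\varepsilon\Delta(\mathcal{Z}_0\mathbf{U})-2\varepsilon\mathbf{DB}^\varepsilon\partial_{yy}\mathbf{U}+\varepsilon(\mathcal{Z}_0\mathbf{DB}^\varepsilon)\Delta\mathbf{U}$. The first piece, after integration by parts, produces the positive contribution $\delta\varepsilon^2\int_0^t\|\nabla\mathcal{Z}_0\mathbf{U}\|_{L^2}^2\,ds$ (the boundary terms from IBP vanish because $\mathcal{Z}_0\mathbf{U}|_{y=0}=0$, which is a clean benefit of the conormal weight $y$). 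The commutator piece $-2\varepsilon\mathbf{DB}^\varepsilon\partial_{yy}\mathbf{U}$ paired with $\varepsilon\mathcal{Z}_0\mathbf{U}$ is handled via two successive IBPs (using $\partial_y\mathcal{Z}_0\mathbf{U}=\partial_y\mathbf{U}+y\partial_{yy}\mathbf{U}$) and is controlled by $\varepsilon^2\int_0^t\|\partial_y\mathbf{U}\|_{L^2}^2\,ds$, absorbable via Proposition \ref{P3.8}. The coefficient commutator $(\mathcal{Z}_0\mathbf{DB}^\varepsilon)\varepsilon\Delta\mathbf{U}$ is bounded using $\sqrt{\varepsilon}\|\mathcal{Z}_0\mathbf{DB}^\varepsilon\|_{L^\infty}\lesssim 1+N(t)+N^2(t)$ by Lemma \ref{L3.6}.

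For the transport terms, splitting $\mathbf{A}_i^\varepsilon=\mathbf{A}_i+\sqrt{\varepsilon}\mathbf{A}_i^p$ as in \eqref{3.11}, the $\sqrt{\varepsilon}\mathbf{A}_i^p$ parts are estimated exactly as in $J_2$ of \eqref{3.48} using Proposition \ref{P3.2}. For the main parts, $\mathbf{A}_1 \partial_x(\mathcal{Z}_0\mathbf{U})$ is handled by IBP and the symmetrization via $\mathbf{D}$ (using \eqref{3.17}); $\mathbf{A}_2\partial_y(\mathcal{Z}_0\mathbf{U})$ is handled analogously using \eqref{3.18}, with the coefficient estimate $\|\partial_y(\mathbf{DA}_2)\|_{L^\infty}\lesssim (1+N+N^2)(1+\|\partial_x(\mathbf{u}^\varepsilon,\mathbf{H}^\varepsilon)\|_{L^\infty})$ arising from H\"ardy's trick on $\rho^\varepsilon(v^\varepsilon+a^p g^\varepsilon)/y$ exactly as in \eqref{3.38}. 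The commutator $[\mathcal{Z}_0,\mathbf{A}_i^\varepsilon\partial_i]\mathbf{U}=(\mathcal{Z}_0\mathbf{A}_i^\varepsilon)\partial_i\mathbf{U}+\mathbf{A}_i^\varepsilon[\mathcal{Z}_0,\partial_i]\mathbf{U}$ is split analogously to \eqref{3.55}--\eqref{3.57} by decomposing $\mathbf{A}_2^\varepsilon=\mathbf{A}_2^a+\sqrt{\varepsilon}\mathbf{A}_2^p+\varepsilon^{3/2}\mathbf{A}_2^0$. The source terms $\mathcal{Z}_0\mathbf{E}^\varepsilon$ and $\mathcal{Z}_0\mathbf{C}^\varepsilon$ are estimated using \eqref{3.14} and Proposition \ref{vector} respectively, yielding contributions bounded by $C(t+1)+C\int_0^t(1+N+N^2)\|\sqrt{\varepsilon}\mathcal{Z}_0\mathbf{U}\|^2\,ds$. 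Combining everything with Lemma \ref{L3.4} and Proposition \ref{P3.8}, choosing $\lambda$ small to absorb $\lambda Q(t)$ terms into the left-hand side (via Lemma \ref{L3.4}), gives \eqref{3.61}. The main obstacle is the pressure term: the non-commutation of $\mathcal{Z}_0$ with $\nabla\cdot$ forces us to keep the $\varepsilon\int_0^t(\|\partial_y p\|^2+\|\mathcal{Z}_0 p\|^2)\,ds$ on the right-hand side, to be absorbed later by a separate elliptic estimate on the pressure.
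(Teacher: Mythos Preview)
Your proposal is correct and follows essentially the same route as the paper: apply $\mathcal{Z}_0$ to the second equation in \eqref{3.9}, symmetrize with $\mathbf{D}$, pair with $\varepsilon\mathcal{Z}_0\mathbf{U}$, and track the commutators $[\mathcal{Z}_0,\partial_y]=-\partial_y$ and $[\mathcal{Z}_0,\partial_y^2]=-2\partial_y^2$. In particular, your treatment of the pressure---splitting $\mathcal{Z}_0(p_x,p_y)$ into $(\partial_x\mathcal{Z}_0 p,\partial_y\mathcal{Z}_0 p)+(0,-\partial_y p)$ and then using $\nabla\cdot(\mathcal{Z}_0\tilde{\mathbf{u}})=\partial_y\tilde v$ after integration by parts---is exactly the mechanism the paper uses to produce the residual $C\varepsilon\int_0^t(\|\partial_y p\|_{L^2}^2+\|\mathcal{Z}_0 p\|_{L^2}^2)\,ds$ in \eqref{3.61}.

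The one place your argument diverges slightly is the diffusion commutator $-2\varepsilon\langle\mathbf{DB}^\varepsilon\partial_{yy}\mathbf{U},\varepsilon\mathcal{Z}_0\mathbf{U}\rangle$. The paper does \emph{not} integrate by parts here; it applies Cauchy--Schwarz directly and pays with $\lambda\varepsilon^3\int_0^t\|\partial_{yy}(\tilde u,\tilde h)\|_{L^2}^2\,ds$, which is part of $Q(t)$ and hence controlled by Lemma~\ref{L3.4} (for the $(\tilde v,\tilde g)$ components it uses the divergence-free relation $\partial_{yy}\tilde v=-\partial_{xy}\tilde u$). Your integration-by-parts route also works, but note that a single IBP suffices: one obtains $-\varepsilon^2\langle\mathbf{DB}^\varepsilon\partial_y\mathbf{U},\partial_y\mathcal{Z}_0\mathbf{U}\rangle$ plus a coefficient term, and the former is absorbed by Young's inequality into the positive diffusion $\delta\varepsilon^2\|\nabla\mathcal{Z}_0\mathbf{U}\|^2$ at the cost of $C\varepsilon^2\int_0^t\|\partial_y\mathbf{U}\|^2\,ds$. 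Your ``two successive IBPs'' would require symmetry of $\mathbf{DB}^\varepsilon$, which does not hold (cf.\ \eqref{3.12}); so the clean reduction to $\varepsilon^2\|\partial_y\mathbf{U}\|^2$ alone is not quite available, but the one-IBP-plus-absorption variant is. Also, for the coefficient commutator $(\mathcal{Z}_0\mathbf{B}^\varepsilon)\Delta\mathbf{U}$ you actually have the stronger bound $\|\mathcal{Z}_0(\rho^\varepsilon a^p)\|_{L^\infty}\lesssim 1+N+N^2$ (no $\sqrt\varepsilon$ needed), since $y\partial_y a^p=O(1)$ and $\|y\partial_y\rho^\varepsilon\|_{L^\infty}$ is controlled by Lemma~\ref{L3.6}; this is what makes the $\varepsilon^{3/2}\|\partial_{yy}\tilde h\|$ pairing close.
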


\begin{proof}
Applying the operator $\mathcal{Z}_0$ on the second equation in \eqref{3.9}, multiplying it by ${\bf{D}}$ from the left, taking inner product on the resulting equality with $\varepsilon \mathcal{Z}_0{\bf{U}}$, and integrating it over $[0, t]$, we have
\begin{align}\label{3.62}
&\frac12\langle {\bf{DS}}^\varepsilon \mathcal{Z}_0{\bf{U}}, \varepsilon \mathcal{Z}_0{\bf{U}}\rangle
+\int_0^t\langle {\bf{DA}}_1^\varepsilon\partial_x\mathcal{Z}_0{\bf{U}}, \varepsilon\mathcal{Z}_0{\bf{U}}\rangle\;ds
+\int_0^t\langle {\bf{DA}}_2^\varepsilon\partial_y\mathcal{Z}_0{\bf{U}}, \varepsilon \mathcal{Z}_0{\bf{U}}\rangle\;ds\notag\\
&\quad+\int_0^t\langle {\bf{D}}\mathcal{Z}_0{\bf{C}}^\varepsilon, \varepsilon\mathcal{Z}_0{\bf{U}}\rangle\;ds
+\int_0^t \langle (\partial_x\mathcal{Z}_0 p, \partial_y\mathcal{Z}_0 p, 0, 0)^T, \varepsilon\mathcal{Z}_0{\bf{U}}\rangle\;ds\notag\\
&\quad +\int_0^t\langle {\bf{DA}}_2^\varepsilon [\mathcal{Z}_0, \partial_y]{\bf{U}}, \varepsilon \mathcal{Z}_0{\bf{U}}\rangle\;ds
+\int_0^t \langle  [\mathcal{Z}_0, \partial_y] p, \varepsilon\mathcal{Z}_0\tilde{v}\rangle\;ds\notag\\
&\quad-\int_0^t \varepsilon\langle {\bf{DB}}^\varepsilon\Delta\mathcal{Z}_0{\bf{U}}, \varepsilon\mathcal{Z}_0{\bf{U}}\rangle\;ds
-\int_0^t \varepsilon\langle {\bf{DB}}^\varepsilon [\mathcal{Z}_0, \partial_y^2]{\bf{U}}, \varepsilon\mathcal{Z}_0{\bf{U}}\rangle\;ds\notag\\
&\quad+\int_0^t \left\langle{\bf{D}}\left(\mathcal{Z}_0{\bf{S}}^\varepsilon\partial_t{\bf{U}}+\mathcal{Z}_0{\bf{A}}_1^\varepsilon\partial_x{\bf{U}}+\mathcal{Z}_0{\bf{A}}_2^\varepsilon\partial_y{\bf{U}}\right), \varepsilon\mathcal{Z}_0{\bf{U}}\right\rangle\;ds\notag\\
&\quad-\int_0^t \varepsilon\langle {\bf{D}}\mathcal{Z}_0{\bf{B}}^\varepsilon\Delta{\bf{U}}, \varepsilon\mathcal{Z}_0{\bf{U}}\rangle\;ds
-\int_0^t \langle{\bf{D}}\mathcal{Z}_0{\bf{E}}^\varepsilon, \varepsilon\mathcal{Z}_0{\bf{U}}\rangle\;ds\notag\\
&\quad=\frac12\varepsilon\|\sqrt{{\bf{DS}}^\varepsilon}\mathcal{Z}_0{\bf{U}}(0, \cdot)\|_{L^2}^2+\frac12\int_0^t \langle\partial_t({\bf{DS}}^\varepsilon)\mathcal{Z}_0{\bf{U}}, \varepsilon\mathcal{Z}_0{\bf{U}}\rangle\;ds.
\end{align}
By integration by parts and the divergence free condition, we have
\begin{align*}
\langle (\partial_x \mathcal{Z}_0p, \partial_y\mathcal{Z}_0 p, 0, 0)^T, \varepsilon\mathcal{Z}_0{\bf{U}}\rangle=\langle \mathcal{Z}_0p, \varepsilon[\mathcal{Z}_0, \partial_y]\tilde{v}\rangle.
\end{align*}
%Here we only show the estimates of the terms involving commutators since the other terms can be handled totally like Section $3.5$ and they indeed shear the same bound.
Since
\begin{align}\label{3.63}
[\mathcal{Z}_0, \partial_y]=-\partial_y,
\end{align}
%Substitute it into the above equality, we obtain
it follows that
\begin{align}\label{3.64}
\langle \mathcal{Z}_0p, \varepsilon[\mathcal{Z}_0, \partial_y]\tilde{v}\rangle\le C\varepsilon\int_0^t\|\partial_y\tilde{v}\|_{L^2}^2\;ds+C\varepsilon\int_0^t \|\mathcal{Z}_0 p\|_{L^2}^2\;ds.
\end{align}
Similarly, we can also deduce that
\begin{align}\label{3.65}
&\left|\int_0^t\langle {\bf{DA}}_2^\varepsilon [\mathcal{Z}_0, \partial_y]{\bf{U}}, \varepsilon \mathcal{Z}_0{\bf{U}}\rangle\;ds\right|
+\left|\int_0^t \langle  [\mathcal{Z}_0, \partial_y] p, \varepsilon\mathcal{Z}_0\tilde{v}\rangle\;ds\right|\notag\\
\le& C\varepsilon\int_0^t\|\partial_y{\bf{U}}\|_{L^2}^2\;ds+C\varepsilon\int_0^t \|\partial_yp\|_{L^2}^2\;ds+C\int_0^t\|\sqrt{\varepsilon}\mathcal{Z}_0{\bf{U}}\|_{L^2}^2\;ds.
\end{align}
On the other hand, the following fact holds
\begin{align*}
[\mathcal{Z}_0, \partial_y^2]=-2\partial_y^2,
\end{align*}
from which we infer that
\begin{align}\label{3.66}
&\left|\int_0^t \varepsilon\langle {\bf{DB}}^\varepsilon [\mathcal{Z}_0, \partial_y^2]{\bf{U}}, \varepsilon\mathcal{Z}_0{\bf{U}}\rangle\;ds\right|\notag\\
\le& \lambda\varepsilon^3\int_0^t \|\partial_{yy}(\tilde{u}, \tilde{h})\|_{L^2}^2\;ds+C\varepsilon^2\int_0^t \|\nabla(\tilde{v}_x, \tilde{g}_x)\|_{L^2}^2\;ds
+C\int_0^t \|\sqrt{\varepsilon}\mathcal{Z}_0{\bf{U}}\|_{L^2}^2\;ds.
\end{align}
Combine all the estimates together, we conclude
\begin{align}\label{3.67}
&\|\sqrt{\varepsilon}\mathcal{Z}_0{\bf{U}}(t, \cdot)\|_{L^2}^2+\varepsilon^2\int_0^t \|\nabla\mathcal{Z}_0 {\bf{U}}_\tau(s, \cdot)\|_{L^2}^2\;ds\nonumber\\
\lesssim& 1+(C_\lambda t+\lambda)\mathcal{P}(N(t))+C\varepsilon\int_0^t (\|\partial_yp\|_{L^2}^2+\|\mathcal{Z}_0 p\|_{L^2}^2)\;ds.
\end{align}
The proof of Proposition $\ref{P3.10}$ is complete.
\end{proof}

\subsection{First order derivative of $\tilde{\rho}$}\label{Sec3.7}
Now, we establish the estimates of $\mathcal{Z}\tilde{\rho}$ where $\mathcal{Z}$ is defined in \eqref{3.23}.
\begin{proposition}
\label{P3.11}
For any fixed constant $\delta>0$ which satisfies \eqref{3.19}-\eqref{3.21}, there exists a unique solution $\tilde{\rho}(t, x, y)$ to the system \eqref{3.9}. Moreover, for any $\lambda>0$ small enough, it holds that
\begin{align}\label{0}
\|\sqrt{\varepsilon}\mathcal{Z}\tilde{\rho}(t, \cdot)\|_{L^2}^2\le C+(C_\lambda t+\lambda)\mathcal{P}(N(t)),
\end{align}
where the positive constant $C$ is independent of $\varepsilon$.
\end{proposition}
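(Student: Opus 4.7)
The plan is to mimic the energy-estimate strategy of Propositions~\ref{P3.8}--\ref{P3.10}, now applied to the scalar transport equation for $\tilde\rho$ in the first line of \eqref{3.9}. For each $\mathcal{Z}\in\{\partial_t,\partial_x,y\partial_y,\sqrt\varepsilon\partial_y\}$ I would apply $\mathcal{Z}$ to that equation, take the $L^2$ inner product with $\varepsilon\,\mathcal{Z}\tilde\rho$, and integrate over $[0,t]$. Since $\nabla\!\cdot\!{\bf u}^\varepsilon=0$ and $v^\varepsilon|_{y=0}=0$, the convection term produces no bulk or boundary contribution, leaving four groups of terms to control: commutators of $\mathcal{Z}$ with the transport operator, the $\mathcal{Z}$-derivative of the $\varepsilon c^p\kappa(\partial_y\tilde h-\partial_x\tilde g)$ coupling, the $\mathcal{Z}$-derivative of the vector ${\bf C}^0$, and the source $\mathcal{Z}{\bf E}^0$. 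The last two are handled directly by Propositions~\ref{P3.2} and \ref{vector}, contributing bounds of the form $C t+C\int_0^t(1+Q(s))\|\mathcal{Z}\tilde\rho\|_{L^2}^2\,ds$.

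For $\mathcal{Z}=\partial_\tau$ with $\tau\in\{t,x\}$ the commutator $[\partial_\tau,{\bf u}^\varepsilon\!\cdot\!\nabla]\tilde\rho=\partial_\tau{\bf u}^\varepsilon\!\cdot\!\nabla\tilde\rho$ contains the delicate piece $\partial_\tau v^\varepsilon\,\partial_y\tilde\rho$. Because $\|\partial_y\tilde\rho\|_{L^2}$ is only known to be of order $\varepsilon^{-1}$, I cannot absorb it directly; instead, following strategy (b) in the introduction, I would write $\partial_\tau v^\varepsilon\,\partial_y\tilde\rho=(\partial_\tau v^\varepsilon/y)(y\partial_y\tilde\rho)$ and apply Hardy's inequality together with $\partial_\tau v^\varepsilon|_{y=0}=0$ to bound $\|\partial_\tau v^\varepsilon/y\|_{L^\infty}\lesssim\|\partial_y\partial_\tau v^\varepsilon\|_{L^\infty}$, which Corollary~\ref{C3.5} and Lemma~\ref{L3.4} control by $1+Q(t)$. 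This closes the $\partial_\tau$-case in terms of $\|\sqrt\varepsilon\, y\partial_y\tilde\rho\|_{L^2}$, which is itself part of the quantity being estimated.

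For $\mathcal{Z}=y\partial_y$ the relevant identity is $[y\partial_y,\partial_t+u^\varepsilon\partial_x+v^\varepsilon\partial_y]=(y\partial_yu^\varepsilon)\partial_x+(y\partial_yv^\varepsilon-v^\varepsilon)\partial_y$. The term $v^\varepsilon\partial_y\tilde\rho$ is again handled by Hardy, while $(y\partial_yv^\varepsilon)\partial_y\tilde\rho=(\partial_yv^\varepsilon)(y\partial_y\tilde\rho)=-(\partial_xu^\varepsilon)(y\partial_y\tilde\rho)$ closes directly. For $\mathcal{Z}=\sqrt\varepsilon\partial_y$ the analogous commutator yields $\sqrt\varepsilon\partial_yu^\varepsilon\,\partial_x\tilde\rho+\sqrt\varepsilon\partial_yv^\varepsilon\,\partial_y\tilde\rho$; the first factor in each product is $O(1)$ in $L^\infty$ because $\sqrt\varepsilon\partial_y u^a=O(1)$ by the boundary-layer rescaling and $\sqrt\varepsilon\partial_yv^\varepsilon=-\sqrt\varepsilon\partial_xu^\varepsilon$ by incompressibility, so the products close against $\|\partial_x\tilde\rho\|_{L^2}$ and $\|\sqrt\varepsilon\partial_y\tilde\rho\|_{L^2}$ respectively.

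The main obstacle is the coupling term $\mathcal{Z}\bigl(\varepsilon c^p\kappa(\partial_y\tilde h-\partial_x\tilde g)\bigr)$: $c^p$ depends on the fast variable $\eta=y/\sqrt\varepsilon$, so each normal derivative of $c^p$ loses a factor $\varepsilon^{-1/2}$, while $\mathcal{Z}=\sqrt\varepsilon\partial_y$ also produces $\partial_{yy}\tilde h$. The key point, as indicated in heuristic~(d), is that the prefactor $\varepsilon$ precisely balances these losses: combining $\varepsilon\cdot\varepsilon^{-1/2}\cdot\|\sqrt\varepsilon\partial_y\tilde h\|_{L^2}$ and the control $\|\varepsilon^{3/2}\partial_{yy}\tilde h\|_{L^2}^2\lesssim Q(t)$ from Lemma~\ref{L3.4}, everything is dominated by $C+\lambda Q(t)+C_\lambda t\cdot\mathcal{P}(N(t))$. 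Summing the four cases, applying Gronwall, and invoking Lemma~\ref{L3.4} to absorb $Q(t)$ into $N(t)$ yields~\eqref{0}.
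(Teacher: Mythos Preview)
Your overall scheme matches the paper's proof almost exactly: apply $\mathcal{Z}$ to the $\tilde\rho$-equation, test against $\varepsilon\mathcal{Z}\tilde\rho$, and treat the four groups of terms you list. The handling of $\mathcal{Z}{\bf C}^0$, $\mathcal{Z}{\bf E}^0$, the $y\partial_y$ and $\sqrt\varepsilon\partial_y$ commutators, and the coupling $\varepsilon c^p\kappa(\partial_y\tilde h-\partial_x\tilde g)$ is essentially what the paper does, including the use of $\varepsilon^3\int_0^t\|\partial_{yy}\tilde h\|_{L^2}^2\,ds\le Q(t)$ for the worst piece.

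There is one genuine slip in the $\partial_\tau$ commutator for $\tau\in\{t,x\}$. You write $\partial_\tau v^\varepsilon\,\partial_y\tilde\rho=(\partial_\tau v^\varepsilon/y)(y\partial_y\tilde\rho)$ and then claim that Corollary~\ref{C3.5} and Lemma~\ref{L3.4} bound $\|\partial_\tau v^\varepsilon/y\|_{L^\infty}\lesssim\|\partial_{y\tau}v^\varepsilon\|_{L^\infty}=\|\partial_{x\tau}u^\varepsilon\|_{L^\infty}$. But Corollary~\ref{C3.5} only controls $\int_0^t\|\partial_\tau{\bf u}^\varepsilon\|_{L^\infty}^2\,ds$, a \emph{first}-order tangential derivative; nothing in $Q(t)$ or $N(t)$ gives a bound on the second-order quantity $\|\partial_{x\tau}u^\varepsilon\|_{L^\infty}$, and it is not available at this stage.

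The paper fixes this by splitting $v^\varepsilon=v^a+\varepsilon^{3/2}v$ before invoking Hardy. For the approximate part one has $\|\partial_\tau v^a/y\|_{L^\infty}=O(1)$ directly from the construction, so $\langle(\partial_\tau v^a/y)\,y\partial_y\tilde\rho,\varepsilon\mathcal{Z}\tilde\rho\rangle$ closes against $\|\sqrt\varepsilon\, y\partial_y\tilde\rho\|_{L^2}\|\sqrt\varepsilon\,\mathcal{Z}\tilde\rho\|_{L^2}$. For the error part one redistributes the $\varepsilon$'s instead of using Hardy: write $\varepsilon^{3/2}\partial_\tau v\cdot\partial_y\tilde\rho=(\varepsilon\partial_\tau v)(\sqrt\varepsilon\,\partial_y\tilde\rho)$ and use $\int_0^t\|\varepsilon\partial_\tau v\|_{L^\infty}^2\,ds\le Q(t)$, which \emph{is} in the definition of $Q$. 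With this correction your argument goes through and coincides with the paper's.
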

\begin{proof}
Applying the derivative operator $\mathcal{Z}$ on the first equation in \eqref{3.9}, taking the inner product on it with $\varepsilon\mathcal{Z}\tilde{\rho}$ and integrating it over $[0, t]$, after integration by parts we have
\begin{align}\label{1}
\|\sqrt{\varepsilon}\mathcal{Z}\tilde{\rho}\|_{L^2}^2&= \|\sqrt{\varepsilon}\mathcal{Z}\tilde{\rho}_0\|_{L^2}^2+\int_0^t\langle\mathcal{Z}{\bf{u}}^\varepsilon\cdot\nabla\tilde{\rho}, \varepsilon\mathcal{Z}\tilde{\rho}\rangle\;ds+\int_0^t \langle v^\varepsilon [y\partial_y, \partial_y]\tilde{\rho}, \varepsilon\mathcal{Z}\tilde{\rho}\rangle\;ds\\
&\quad-\int_0^t \langle\mathcal{Z}{\bf{C}}^0, \varepsilon\mathcal{Z}\tilde{\rho}\rangle\;ds-\int_0^t \langle\mathcal{Z}[c^p\kappa\varepsilon(\partial_y\tilde{h}-\partial_x\tilde{g})], \varepsilon\mathcal{Z}\tilde{\rho}\rangle\;ds
+\int_0^t \langle\mathcal{Z}{\bf{E}}^0, \varepsilon\mathcal{Z}\tilde{\rho}\rangle\;ds\notag.
\end{align}
Notice that the third term on the right hand side of \eqref{1} appears only in the case that $\mathcal{Z}=y\partial_y$ and by \eqref{3.63} and H\"ardy's trick, we have
\begin{align}\label{8}
\left|\int_0^t \langle v^\varepsilon [y\partial_y, \partial_y]\tilde{\rho}, \varepsilon\mathcal{Z}\tilde{\rho}\rangle\;ds\right|\le \lambda Q(t)+C_\lambda\int_0^t (1+\|\sqrt{\varepsilon}\mathcal{Z}\tilde{\rho}\|_{L^2}^2)\|\sqrt{\varepsilon}\mathcal{Z}\tilde{\rho}\|_{L^2}^2\;ds.
\end{align}
Based on Propositions $\ref{P3.2}$ and $\ref{vector}$, it is direct to estimate
\begin{align}\label{2}
&\left|\int_0^t \langle\mathcal{Z}{\bf{C}}^0, \varepsilon\mathcal{Z}\tilde{\rho}\rangle\;ds\right|+\left|\int_0^t \langle\mathcal{Z}{\bf{E}}^0, \varepsilon\mathcal{Z}\tilde{\rho}\rangle\;ds\right|\\
\le&C\varepsilon\int_0^t\|\nabla{\bf{U}}\|_{L^2}^2\;ds+C\int_0^t\|{\bf{U}}\|_{L^2}^2\;ds+C\int_0^t\|\sqrt{\varepsilon}\partial_\tau{\bf{U}}\|_{L^2}^2\;ds
+C\int_0^t\|\sqrt{\varepsilon}\mathcal{Z}\tilde{\rho}\|_{L^2}^2\;ds+C.\notag\end{align}
Thanks to $\|\sqrt{\varepsilon}\mathcal{Z}c^p\|_{L^\infty}\le C$, we can obtain
\begin{align}\label{3}
\left|\int_0^t \langle(\mathcal{Z}c^p)\kappa\varepsilon(\partial_y\tilde{h}-\partial_x\tilde{g}), \varepsilon\mathcal{Z}\tilde{\rho}\rangle\;ds\right|\le
C\varepsilon\int_0^t\|\nabla{\bf{U}}\|_{L^2}^2\;ds+C\int_0^t\|\sqrt{\varepsilon}\mathcal{Z}\tilde{\rho}\|_{L^2}^2\;ds.
\end{align}
Similarly, when $\mathcal{Z}=\partial_t$ or $\partial_x$, one has
\begin{align}\label{4}
\left|\int_0^t \langle c^p\kappa\varepsilon\mathcal{Z}(\partial_y\tilde{h}-\partial_x\tilde{g}), \varepsilon\mathcal{Z}\tilde{\rho}\rangle\;ds\right|\le
C\varepsilon^2\int_0^t\|\nabla{\bf{U}}_{\tau}\|_{L^2}^2\;ds+C\int_0^t\|\sqrt{\varepsilon}\mathcal{Z}\tilde{\rho}\|_{L^2}^2\;ds.
\end{align}
For the case $\mathcal{Z}=y\partial_y$ or $\sqrt{\varepsilon}\partial_y$, since $\|yc^p\|_{L^\infty}, \|\sqrt{\varepsilon}c^p\|_{L^\infty}=O(1)$, then for any $\lambda>0$, we have
\begin{align}\label{5}
&\left|\int_0^t \langle c^p\kappa\varepsilon\mathcal{Z}(\partial_y\tilde{h}-\partial_x\tilde{g}), \varepsilon\mathcal{Z}\tilde{\rho}\rangle\;ds\right|\notag\\
\le&\lambda\varepsilon^3\int_0^t\|\partial_{yy}\tilde{h}\|_{L^2}^2\;ds+C\varepsilon^2\int_0^t\|\nabla{\bf{U}}_{\tau}\|_{L^2}^2\;ds+C\int_0^t\|\sqrt{\varepsilon}\mathcal{Z}\tilde{\rho}\|_{L^2}^2\;ds\notag\\
\le& \lambda Q(t)+C_\lambda\varepsilon^2\int_0^t\|\nabla{\bf{U}}_{\tau}\|_{L^2}^2\;ds+C\int_0^t\|\sqrt{\varepsilon}\mathcal{Z}\tilde{\rho}\|_{L^2}^2\;ds.
\end{align}
The following term is the most involved. We divide it into two parts.
\begin{align*}
\int_0^t\langle\mathcal{Z}{\bf{u}}^\varepsilon\cdot\nabla\tilde{\rho}, \varepsilon\mathcal{Z}\tilde{\rho}\rangle\;ds=\int_0^t\langle\mathcal{Z}u^\varepsilon\partial_x\tilde{\rho}, \varepsilon\mathcal{Z}\tilde{\rho}\rangle\;ds+\int_0^t\langle\mathcal{Z}v^\varepsilon\partial_y\tilde{\rho}, \varepsilon\mathcal{Z}\tilde{\rho}\rangle\;ds.
\end{align*}
For the first part, from \eqref{3.26}, we infer that
\begin{align*}
\int_0^t\|\mathcal{Z}u^\varepsilon\|_{L^\infty}^2\;ds=\int_0^t\|\mathcal{Z}u^a\|_{L^\infty}^2\;ds+\int_0^t\|\varepsilon^{\frac32}\mathcal{Z}u\|_{L^\infty}^2\;ds\lesssim 1+Q(t).
\end{align*}
Thus, we deduce that
\begin{align}\label{6}
\left|\int_0^t\langle\mathcal{Z}u^\varepsilon\partial_x\tilde{\rho}, \varepsilon\mathcal{Z}\tilde{\rho}\rangle\;ds\right|\le \lambda Q(t)+C_\lambda\int_0^t (1+\|\sqrt{\varepsilon}\mathcal{Z}\tilde{\rho}\|_{L^2}^2)\|\sqrt{\varepsilon}\mathcal{Z}\tilde{\rho}\|_{L^2}^2\;ds.
\end{align}
For the second part, we first consider the case that $\mathcal{Z}=\partial_t$ or $\partial_x$. Then by H\"ardy's inequality and divergence free condition, it implies that
\begin{align}\label{7}
\left|\int_0^t\langle\mathcal{Z}v^\varepsilon\partial_y\tilde{\rho}, \varepsilon\mathcal{Z}\tilde{\rho}\rangle\;ds\right|=&\left|\int_0^t\langle\frac{\partial_{\tau}v^a}{y}y\partial_y\tilde{\rho}, \varepsilon\mathcal{Z}\tilde{\rho}\rangle\;ds+\int_0^t\langle \varepsilon\partial_\tau v\cdot\sqrt{\varepsilon}\partial_y\tilde{\rho}, \varepsilon\mathcal{Z}\tilde{\rho}\rangle\;ds\right|\notag\\
\le&\lambda\int_0^t \|\varepsilon\partial_\tau v\|_{L^\infty}^2\;ds+C_\lambda\int_0^t(1+\|\sqrt{\varepsilon}\mathcal{Z}\tilde{\rho}\|_{L^2}^2)
\|\sqrt{\varepsilon}\mathcal{Z}\tilde{\rho}\|_{L^2}^2\;ds\notag\\
\le& \lambda Q(t)+C_\lambda\int_0^t(1+\|\sqrt{\varepsilon}\mathcal{Z}\tilde{\rho}\|_{L^2}^2)\|\sqrt{\varepsilon}\mathcal{Z}\tilde{\rho}\|_{L^2}^2\;ds.
\end{align}
If $\mathcal{Z}=y\partial_y$ or $\sqrt{\varepsilon}\partial_y$, by divergence free condition, it becomes
\begin{align*}
\int_0^t\langle\mathcal{Z}v^\varepsilon\partial_y\tilde{\rho}, \varepsilon\mathcal{Z}\tilde{\rho}\rangle\;ds=\int_0^t\langle\partial_yv^\varepsilon\mathcal{Z}\tilde{\rho}, \varepsilon\mathcal{Z}\tilde{\rho}\rangle\;ds=\int_0^t\langle\partial_xu^\varepsilon\mathcal{Z}\tilde{\rho}, \varepsilon\mathcal{Z}\tilde{\rho}\rangle\;ds.
\end{align*}
In this way, it can be estimated by the same arguments as those in \eqref{7}. Combining \eqref{1}-\eqref{7} and using Propositions \ref{P3.9} and \ref{P3.10}, we achieve that
\begin{align*}
\|\sqrt{\varepsilon}\mathcal{Z}\tilde{\rho}\|_{L^2}^2\le C+(C_\lambda t+\lambda)\mathcal{P}(N(t)).
\end{align*}
This finishes the proof of Proposition $\ref{P3.11}$.
\end{proof}

%Based on the Proposition $3.3$ and $3.4$, we have the following lemma.
%\begin{lemma}
%For $h$ defined in \eqref{3.2}, it satisfies the following estimate.
%\begin{align}\label{3.68}
%\varepsilon^3\int_0^t \|\partial_{yy}h\|_{L^2}^2\;ds\le C.
%\end{align}
%\end{lemma}
%\begin{proof}
%From the first equation of \eqref{3.2}, one has
%\begin{align*}
%\varepsilon^3\int_0^t \|\partial_{yy}h\|_{L^2}^2\;ds&\le \varepsilon\int_0^t \|\partial_{t}h\|_{L^2}^2\;ds+\varepsilon\int_0^t \|{\bf{u}}^\varepsilon\cdot\nabla h\|_{L^2}^2\;ds+\varepsilon\int_0^t \|{\bf{H}}^\varepsilon\cdot\nabla u\|_{L^2}^2\;ds\\
%&\quad +\varepsilon\int_0^t \|\rho^\varepsilon{\bf{u}}\cdot\nabla h^a\|_{L^2}^2\;ds+\varepsilon\int_0^t \|{\bf{H}}\cdot\nabla u^a\|_{L^2}^2\;ds\\
%&\quad +\varepsilon\int_0^t \|r_3\|_{L^2}^2\;ds+\varepsilon^3\int_0^t \|\nabla h_x\|_{L^2}^2\;ds.
%\end{align*}
%Then it is direct to get \eqref{3.61} from Proposition $3.3$ and $3.4$.
\subsection{Second order tangential derivative estimates of {\bf{U}}}
This subsection is devoted to establishing the energy estimates of $\partial_{\tau\tau}{\bf{U}}$.
\begin{proposition}
\label{P3.12}
For any fixed constant $\delta>0$ which satisfies \eqref{3.19}-\eqref{3.21}, there exists a unique solution ${\bf{U}}(t, x, y)$ to the system \eqref{3.9}, it holds that for any sufficiently small $\lambda>0$,
\begin{align}\label{3.68}
\|\partial_{\tau\tau}{\bf{U}}\|_{L^2}^2+\varepsilon\int_0^t \|\nabla {\bf{U}}_{\tau\tau}(s, \cdot)\|_{L^2}^2\;ds
\le C+(C_\lambda t+\lambda)\mathcal{P}(N(t)),
\end{align}
where positive constants $C$ and $C_\lambda$ are independent of $\varepsilon$.
\end{proposition}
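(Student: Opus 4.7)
My plan is to follow the template already set in the proof of Proposition~\ref{P3.9}, one derivative higher: apply the second-order tangential derivative $\partial_{\tau\tau}$ (with $\tau\in\{t,x\}$) to the momentum equation in \eqref{3.9}, multiply from the left by the symmetrizer ${\bf D}$ of \eqref{3.16}, take the $L^2$ inner product against an appropriately $\varepsilon$-weighted copy of $\partial_{\tau\tau}{\bf U}$ so that the dissipation $\varepsilon\|\nabla\partial_{\tau\tau}{\bf U}\|_{L^2}^2$ emerges on the left-hand side, and integrate over $[0,t]$. As before, the pressure contribution $\langle(\partial_{\tau\tau}p_x,\partial_{\tau\tau}p_y,0,0)^T,\cdot\rangle$ vanishes after integration by parts thanks to $\nabla\cdot\tilde{\bf u}=0$, while the positive-definiteness of ${\bf D}{\bf S}^\varepsilon$ and ${\bf D}{\bf B}^\varepsilon$ in \eqref{3.20}--\eqref{3.21} delivers the leading energy and dissipation terms.

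Most of the commutators generated by distributing $\partial_{\tau\tau}$ across ${\bf S}^\varepsilon$, ${\bf A}_i^\varepsilon$ and ${\bf B}^\varepsilon$ mirror those already treated in \eqref{3.48}--\eqref{3.59}. I would split ${\bf A}_i^\varepsilon={\bf A}_i+\sqrt{\varepsilon}{\bf A}_i^p$ as in \eqref{3.11}, integrate by parts on the ${\bf A}_1$ pieces, and apply H\"ardy's trick together with the divergence-free conditions to control the normal-transport pieces of ${\bf A}_2$ carrying $v^a/y$ or $g^a/y$. The source $\partial_{\tau\tau}{\bf E}^\varepsilon$ is bounded by Proposition~\ref{P3.2}, the commutator $\partial_{\tau\tau}{\bf C}^\varepsilon$ by the natural second-order extension of Proposition~\ref{vector}, and the density factors inside ${\bf S}^\varepsilon$ by Lemma~\ref{L3.6}. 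Lemma~\ref{L3.4} then allows me to absorb $L^\infty$-norms of $\partial_\tau{\bf u}^\varepsilon$ and $\partial_\tau{\bf H}^\varepsilon$ into $\lambda Q(t)+C_\lambda\mathcal{P}(N(t))$-type contributions.

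The genuinely new difficulty, and the one flagged in point~(d) of the introduction, is the commutator $\langle\partial_{\tau\tau}({\bf D}{\bf S}^\varepsilon)\partial_t{\bf U},\cdot\rangle$, which in the case $\tau\tau=tt$ contains $\partial_{tt}\rho^\varepsilon\,\partial_t\tilde{\bf u}$. Since the density equation carries no diffusion and the second-order estimates of $\tilde\rho$ are deliberately avoided, I would use the continuity equation $\partial_t\rho^\varepsilon=-{\bf u}^\varepsilon\cdot\nabla\rho^\varepsilon$ to reexpress $\partial_{tt}\rho^\varepsilon$ through $\partial_{tx}\rho^\varepsilon$, $\partial_{ty}\rho^\varepsilon$ and products of first-order derivatives of $({\bf u}^\varepsilon,\rho^\varepsilon)$. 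A subsequent integration by parts transfers the surviving $\partial_y$ from $\rho^\varepsilon$ onto $\partial_t\tilde{\bf u}$ (with no boundary term, since $\tilde u,\tilde v$ vanish at $y=0$), at the cost of producing $\partial_{yy}\tilde u$ or $\partial_{yy}\tilde h$; these are already known to be of size $\varepsilon^{-1}$ in $L^2$ by Lemma~\ref{L3.4}, and the $\varepsilon$-power in the test function together with the $\varepsilon^{3/2}$ built into the ansatz makes the offending product absorbable into $\lambda Q(t)$. The parallel commutators $\partial_\tau{\bf A}_2^\varepsilon\,\partial_{\tau y}{\bf U}$ and $\partial_{\tau\tau}{\bf A}_2^\varepsilon\,\partial_y{\bf U}$ are handled by the splitting ${\bf A}_2^\varepsilon={\bf A}_2^a+\sqrt\varepsilon{\bf A}_2^p+\varepsilon^{3/2}{\bf A}_2^0$ as in \eqref{3.56}--\eqref{3.57}, treating the boundary-layer pieces via H\"ardy's trick on $v^a,g^a$.

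Finally, collecting the estimates and invoking Propositions~\ref{P3.8}--\ref{P3.11} to control $\|{\bf U}\|_{L^2}$, $\|\sqrt\varepsilon\partial_\tau{\bf U}\|_{L^2}$, $\|\sqrt\varepsilon\mathcal{Z}_0{\bf U}\|_{L^2}$ and $\|\sqrt\varepsilon\mathcal{Z}\tilde\rho\|_{L^2}$, I would choose $\lambda>0$ small enough to absorb contributions of the form $\lambda Q(t)\lesssim\lambda\mathcal{P}(N(t))$ into the polynomial right-hand side, and a short Gronwall-type argument then delivers \eqref{3.68}.
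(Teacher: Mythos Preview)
Your outline is correct and follows the paper's own route: apply $\partial_{\tau\tau}$ to \eqref{3.9}, test against $\varepsilon^2\,{\bf D}\partial_{\tau\tau}{\bf U}$, and handle the top-order commutators containing $\partial_{\tau\tau}\rho^\varepsilon$ by rewriting $\partial_{tt}\rho^\varepsilon$ via the transport equation and then integrating by parts in $y$, exactly as in the paper's Case~II. Two small refinements to bear in mind: (i) the cases $\partial_{\tau\tau}\in\{\partial_{tx},\partial_{xx}\}$ also produce second-order density factors and are disposed of in the paper by integration by parts in $x$ (Case~I), not by the first-order machinery of \eqref{3.48}--\eqref{3.59}; (ii) the $y$-integration by parts in Case~II lands $\partial_y$ not only on $\partial_t\tilde{\bf u}$ but also on the coefficient block, which contains $\kappa\varepsilon a^p\Delta\tilde h$ and therefore generates a $\partial_{yyy}\tilde h$ term---this is again controlled by $Q(t)$ through Lemma~\ref{L3.4}, and is why the conormal estimate of Proposition~\ref{P3.10} is needed.
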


\begin{proof}
Applying the second order tangential derivative $\partial_{\tau\tau}(\tau=t, x)$ on the second equation in \eqref{3.9}, then multiplying it by ${\bf{D}}$ from the left and taking the inner product with $\varepsilon^2\partial_{\tau\tau}{\bf{U}}$, finally integrating it over $[0, t]$, we have
\begin{align}\label{3.69}
&\frac12\langle {\bf{DS}}^\varepsilon\partial_{\tau\tau}{\bf{U}}, \varepsilon^2\partial_{\tau\tau}{\bf{U}}\rangle
+\int_0^t\langle {\bf{DA}}_1^\varepsilon\partial_x\partial_{\tau\tau}{\bf{U}}, \varepsilon^2\partial_{\tau\tau}{\bf{U}}\rangle\;ds
+\int_0^t\langle {\bf{DA}}_2^\varepsilon\partial_y\partial_{\tau\tau}{\bf{U}}, \varepsilon^2\partial_{\tau\tau}{\bf{U}}\rangle\;ds
\notag\\
&\quad+\int_0^t\langle {\bf{D}}\partial_{\tau\tau}{\bf{C}}^\varepsilon, \varepsilon^2\partial_{\tau\tau}{\bf{U}}\rangle\;ds
+\int_0^t \langle (\partial_{\tau\tau} p_x, \partial_{\tau\tau} p_y, 0, 0)^T, \varepsilon^2\partial_{\tau\tau}{\bf{U}}\rangle\;ds
\notag\\
&\quad-\int_0^t \varepsilon\langle {\bf{DB}}^\varepsilon\Delta\partial_{\tau\tau}{\bf{U}}, \varepsilon^2\partial_{\tau\tau}{\bf{U}}\rangle\;ds
-\int_0^t \langle{\bf{D}}\partial_{\tau\tau}{\bf{E}}^\varepsilon, \varepsilon^2\partial_{\tau\tau}{\bf{U}}\rangle\;ds
\notag\\
&\quad+2\int_0^t \left\langle{\bf{D}}\left(\partial_\tau{\bf{S}}^\varepsilon\partial_{t\tau}{\bf{U}}
+\partial_\tau{\bf{A}}_1^\varepsilon\partial_{x\tau}{\bf{U}}
+\partial_\tau{\bf{A}}_2^\varepsilon\partial_{y\tau}{\bf{U}}\right), \varepsilon^2\partial_{\tau\tau}{\bf{U}}\right\rangle\;ds
\notag\\
&\quad-2\int_0^t \varepsilon\langle {\bf{D}}\partial_\tau{\bf{B}}^\varepsilon\Delta\partial_\tau{\bf{U}}, \varepsilon^2\partial_{\tau\tau}{\bf{U}}\rangle\;ds
\notag\\
&\quad+\int_0^t \left\langle{\bf{D}}\left(\partial_{\tau\tau}{\bf{S}}^\varepsilon\partial_{t}{\bf{U}}
+\partial_{\tau\tau}{\bf{A}}_1^\varepsilon\partial_{x}{\bf{U}}
+\partial_{\tau\tau}{\bf{A}}_2^\varepsilon\partial_{y}{\bf{U}}\right), \varepsilon^2\partial_{\tau\tau}{\bf{U}}\right\rangle\;ds
\notag\\
&\quad -\int_0^t \varepsilon\langle {\bf{D}}\partial_{\tau\tau}{\bf{B}}^\varepsilon\Delta{\bf{U}}, \varepsilon^2\partial_{\tau\tau}{\bf{U}}\rangle\;ds
\notag\\
&\quad=\frac12\varepsilon^2\|\sqrt{{\bf{DS}}^\varepsilon}\partial_{\tau\tau}{\bf{U}}(0, \cdot)\|_{L^2}^2
+\frac12\int_0^t \langle\partial_t({\bf{DS}}^\varepsilon)\partial_{\tau\tau}{\bf{U}}, \varepsilon^2\partial_{\tau\tau}{\bf{U}}\rangle\;ds.
\end{align}
First, by integration by parts, the divergence free condition and \eqref{3.21}, we obtain
\begin{align}\label{3.70}
\langle (\partial_{\tau\tau} p_x, \partial_{\tau\tau} p_y, 0, 0)^T, \varepsilon^2\partial_{\tau\tau}{\bf{U}}\rangle=0
\end{align}
and
\begin{align}\label{3.71}
\langle {\bf{DS}}^\varepsilon\partial_{\tau\tau}{\bf{U}}, \varepsilon^2\partial_{\tau\tau}{\bf{U}}\rangle\ge c_\delta\|\varepsilon\partial_{\tau\tau}{\bf{U}}(t, \cdot)\|_{L^2}^2.
\end{align}
Similar arguments as those in \eqref{3.35}-\eqref{3.37} lead to
\begin{align}\label{3.72}
&\left|\int_0^t\langle {\bf{DA}}_1^\varepsilon\partial_x\partial_{\tau\tau}{\bf{U}}, \varepsilon^2\partial_{\tau\tau}{\bf{U}}\rangle\;ds\right|+
\left|\int_0^t\langle {\bf{DA}}_2^\varepsilon\partial_y\partial_{\tau\tau}{\bf{U}}, \varepsilon^2\partial_{\tau\tau}{\bf{U}}\rangle\;ds\right|\notag\\
&\le \frac{\delta\varepsilon^3}{4}\int_0^t \|\nabla{\bf{U}}_{\tau\tau}\|^2_{L^2}\;ds+2\lambda Q(t)+C\int_0^t (1+N(t)+N^2(t))\|\varepsilon\partial_{\tau\tau}{\bf{U}}\|_{L^2}^2\;ds\notag\\
&\quad+\int_0^t\left(1+N^2(t)+N^4(t)\right)\|\varepsilon\partial_{\tau\tau}{\bf{U}}\|_{L^2}^4\;ds.
\end{align}
Then, by Proposition $\ref{P3.2}$ and Lemma $\ref{L3.6}$, one has respectively
\begin{align}\label{3.73}
\left|\int_0^t\langle {\bf{D}}\partial_{\tau\tau}{\bf{E}}^\varepsilon, \varepsilon^2\partial_{\tau\tau}{\bf{U}}\rangle\;ds\right|\le C+C\int_0^t\|\varepsilon\partial_{\tau\tau}{\bf{U}}\|_{L^2}^2\;ds
\end{align}
and
\begin{align}\label{3.74}
\int_0^t \langle\partial_t({\bf{DS}}^\varepsilon)\partial_{\tau\tau}{\bf{U}}, \varepsilon^2\partial_{\tau\tau}{\bf{U}}\rangle\;ds\le C\int_0^t \left(1+N(t)+N^2(t)\right)\|\varepsilon\partial_{\tau\tau}{\bf{U}}\|_{L^2}^2\;ds.
\end{align}
%\begin{align}\label{3.75}
%\left|\int_0^t\langle {\bf{D}}\partial_{\tau\tau}{\bf{C}}^\varepsilon, \varepsilon^2\partial_{\tau\tau}{\bf{U}}\rangle\;ds\right|&\le \frac{\delta\varepsilon^3}{8}\int_0^t \|\nabla{\bf{U}}_{\tau\tau}\|^2_{L^2}\;ds+C\varepsilon\int_0^t\|\nabla{\bf{U}}\|^2_{L^2}\;ds+C\varepsilon^2\int_0^t\|\nabla{\bf{U}}_{\tau}\|^2_{L^2}\;ds\notag\\
%&\quad +C\int_0^t \left(\|{\bf{U}}\|_{L^2}^2+\|\sqrt{\varepsilon}\partial_{\tau}{\bf{U}}\|^2_{L^2}+\|\varepsilon\partial_{\tau\tau}{\bf{U}}\|^2_{L^2}\right)\;ds\notag\\
%&\quad+C\int_0^t \left(\|{\bf{U}}\|_{L^2}^4+\|\sqrt{\varepsilon}\partial_{\tau}{\bf{U}}\|^4_{L^2}+\|\varepsilon\partial_{\tau\tau}{\bf{U}}\|^4_{L^2}\right)\;ds.
%\end{align}
By integration by parts again, similar derivation as \eqref{3.41} and \eqref{3.42} yields that
\begin{align}\label{3.76}
-\varepsilon\langle {\bf{DB}}^\varepsilon\Delta\partial_{\tau\tau}{\bf{U}}, \varepsilon^2\partial_{\tau\tau}{\bf{U}}\rangle\ge \frac{7\delta\varepsilon^3}{8}\|\nabla{\bf{U}}_{\tau\tau}\|^2_{L^2}-C\left(1+N^2(t)+N^4(t)\right)\|\varepsilon\partial_{\tau\tau}{\bf{U}}\|_{L^2}^2.
\end{align}
The first two terms of the forth line of \eqref{3.69} can be estimated as \eqref{3.55}.
\begin{align}\label{3.77}
&\left|\int_0^t \left\langle{\bf{D}}\left(\partial_\tau{\bf{S}}^\varepsilon\partial_{t\tau}{\bf{U}}+\partial_\tau{\bf{A}}_1^\varepsilon\partial_{x\tau}{\bf{U}}\right), \varepsilon^2\partial_{\tau\tau}{\bf{U}}\right\rangle\;ds\right|\notag\\
\le& C\int_0^t \left(1+N(t)+N^2(t)+\|\varepsilon\partial_{\tau\tau}{\bf{U}}\|^2_{L^2}\right){\|\varepsilon\partial_{\tau\tau}\bf{U}}\|_{L^2}^2\;ds.
\end{align}
Also, rewrite ${\bf{A}}_2^\varepsilon$ as in Section $3.5$
\begin{align*}
{\bf{A}}_2^\varepsilon={\bf{A}}_2^a+\sqrt{\varepsilon}{\bf{A}}_2^p+\varepsilon^{\frac32}{\bf{A}}_2^0.
\end{align*}
Similarly, we have
\begin{align}\label{3.78}
\left|\int_0^t \left\langle{\bf{D}}\left(\partial_\tau{\bf{A}}_2^a+\sqrt{\varepsilon}\partial_\tau{\bf{A}}_2^p\right)\partial_{y\tau}{\bf{U}}, \varepsilon^2\partial_{\tau\tau}{\bf{U}}\right\rangle\;ds\right|\le C\varepsilon\int_0^t \|\nabla{\bf{U}}_{\tau}\|_{L^2}^2\;ds+C\int_0^t {\|\varepsilon\partial_{\tau\tau}\bf{U}}\|_{L^2}^2\;ds.
\end{align}
By H\"older's inequality and Sobolev embedding inequality, one has
\begin{align}\label{3.79}
&\left|\int_0^t \left\langle{\bf{D}}\varepsilon^{\frac32}\partial_\tau{\bf{A}}_2^0\partial_{y\tau}{\bf{U}}, \varepsilon^2\partial_{\tau\tau}{\bf{U}}\right\rangle\;ds\right|\notag\\
\le&\int_0^t  \|{\bf{D}}\|_{L^\infty}\|\sqrt{\varepsilon}\partial_\tau{\bf{A}}_2^0\|_{L^2}\|\varepsilon^{\frac54}\partial_{y\tau}{\bf{U}}\|_{L_x^\infty L_y^2}\|\varepsilon^{\frac54}\partial_{\tau\tau}{\bf{U}}\|_{L_x^2 L_y^\infty}\;ds\notag\\
\le& C\int_0^t (1+Q(t))N(t)\|\varepsilon\partial_{y\tau}{\bf{U}}\|_{L^2}^{\frac12}\|\varepsilon^{\frac32}\partial_{xy\tau}{\bf{U}}\|_{L^2}^{\frac12}\|\varepsilon\partial_{\tau\tau}{\bf{U}}\|_{L^2}^{\frac12}\|\varepsilon^{\frac32}\partial_{y\tau\tau}{\bf{U}}\|_{L^2}^{\frac12}\;ds\notag\\
\le& \frac{\delta\varepsilon^3}{4}\int_0^t \|\nabla{\bf{U}}_{\tau\tau}\|^2_{L^2}\;ds+C\varepsilon^2\int_0^t \|\nabla{\bf{U}}_{\tau}\|^2_{L^2}\;ds+C\int_0^t (1+Q^4(t))N^4(t)|\varepsilon\partial_{\tau\tau}{\bf{U}}\|_{L^2}^2\;ds.
\end{align}
By the same procedure as \eqref{3.58}-\eqref{3.59}, we find
\begin{align}\label{3.80}
\left|\int_0^t \varepsilon\langle {\bf{D}}\partial_\tau{\bf{B}}^\varepsilon\Delta\partial_\tau{\bf{U}}, \varepsilon^2\partial_{\tau\tau}{\bf{U}}\rangle\;ds\right|
&\le\frac{\delta\varepsilon^3}{8}\int_0^t \|\nabla{\bf{U}}_{\tau\tau}\|^2_{L^2}\;ds+\lambda\varepsilon^4\int_0^t\|\partial_{\tau yy}\tilde{h}\|_{L^2}^2\;ds\notag\\
&\quad+C_\lambda\int_0^t \left(1+N^2(t)+N^4(t)\right)\|\varepsilon\partial_{\tau\tau}{\bf{U}}\|_{L^2}^2\;ds.
\end{align}
Finally, we establish the estimates of terms in the sixth and seventh lines in \eqref{3.69}, and they can be rewritten in the following manner:
\begin{align*}
\int_0^t \left\langle{\bf{D}}\left(\partial_{\tau\tau}{\bf{S}}^\varepsilon\partial_{t}{\bf{U}}
+\partial_{\tau\tau}{\bf{A}}_1^\varepsilon\partial_{x}{\bf{U}}
+\partial_{\tau\tau}{\bf{A}}_2^\varepsilon\partial_{y}{\bf{U}}\right), \varepsilon^2\partial_{\tau\tau}{\bf{U}}\right\rangle\;ds-\int_0^t \varepsilon\langle {\bf{D}}\partial_{\tau\tau}{\bf{B}}^\varepsilon\Delta{\bf{U}}, \varepsilon^2\partial_{\tau\tau}{\bf{U}}\rangle\;ds.
\end{align*}
To show the main idea of the proof and make the presentation more concise, we only consider the terms in the equation of $\tilde{u}$, and the other terms can be handled in a similar way.
\begin{align}\label{3.81}
&\langle\partial_{\tau\tau}\rho^\varepsilon\left(\partial_t\tilde{u}+(u^\varepsilon+a^ph^\varepsilon)\partial_x\tilde{u}+h^\varepsilon (a^p)^2\partial_x\tilde{h}-\kappa\varepsilon(\partial_ya^p+a^pb^p)\partial_x\tilde{g}\right.\notag\\
&\left.\quad+(v^\varepsilon+a^pg^\varepsilon)\partial_y\tilde{u}+g^\varepsilon (a^p)^2\partial_y\tilde{h}+\kappa\varepsilon(\partial_ya^p+a^pb^p)\partial_y\tilde{h}+\kappa\varepsilon a^p\Delta\tilde{h}\right), \varepsilon^2\partial_{\tau\tau}\tilde{u}\rangle\notag\\
&+\langle l.o.t., \varepsilon^2\partial_{\tau\tau}\tilde{u}\rangle.
\end{align}

Here, $l.o.t.$ consists of the terms at most one order tangential derivative of the density $\rho^\varepsilon$ which can be estimated directly as above. Hence, we only focus on the terms involving $\partial_{\tau\tau}\rho^\varepsilon$ for which we can not obtain any estimate directly. To overcome this issue, we consider $\partial_{\tau\tau}$ in the following two cases.\\
{\bf Case I:} $\partial_{\tau\tau}=\partial_{tx}$ or $\partial_{\tau\tau}=\partial_{xx}$.

For this case, we use the integration by parts for $x$ variable, and the first part of \eqref{3.81} becomes
\begin{align}\label{3.82}
&-\langle\partial_{\tau}\rho^\varepsilon\partial_x\left(\partial_{t}\tilde{u}+(u^\varepsilon+a^ph^\varepsilon)\partial_x\tilde{u}+h^\varepsilon (a^p)^2\partial_x\tilde{h}-\kappa\varepsilon(\partial_ya^p+a^pb^p)\partial_x\tilde{g}\right.\notag\\
&\left.\quad+(v^\varepsilon+a^pg^\varepsilon)\partial_y\tilde{u}+g^\varepsilon (a^p)^2\partial_y\tilde{h}+\kappa\varepsilon(\partial_ya^p+a^pb^p)\partial_y\tilde{h}+\kappa\varepsilon a^p\Delta\tilde{h}\right), \varepsilon^2\partial_{\tau\tau}\tilde{u}\rangle\notag\\
&-\langle\partial_{\tau}\rho^\varepsilon\left(\partial_{t}\tilde{u}+(u^\varepsilon+a^ph^\varepsilon)\partial_x\tilde{u}+h^\varepsilon (a^p)^2\partial_x\tilde{h}-\kappa\varepsilon(\partial_ya^p+a^pb^p)\partial_x\tilde{g}\right.\notag\\
&\left.\quad+(v^\varepsilon+a^pg^\varepsilon)\partial_y\tilde{u}+g^\varepsilon (a^p)^2\partial_y\tilde{h}+\kappa\varepsilon(\partial_ya^p+a^pb^p)\partial_y\tilde{h}+\kappa\varepsilon a^p\Delta\tilde{h}\right), \varepsilon^2\partial_{x\tau\tau}\tilde{u}\rangle\notag\\
&=:K_1+K_2.
\end{align}
Notice that $K_1$ is just like the $l.o.t.$ in \eqref{3.81} which can also be estimated similarly. Next, by using \eqref{3.29} and \eqref{3.25}, we deduce that
\begin{align}\label{3.83}
|K_2|\le& \frac{\delta\varepsilon^3}{4}\int_0^t \|\nabla{\bf{U}}_{\tau\tau}\|^2_{L^2}\;ds+C\left(1+N^2(t)+N^4(t)\right)\varepsilon\int_0^t \|\nabla{\bf{U}}\|^2_{L^2}\;ds\\
&+C\left(1+N^2(t)+N^4(t)\right)\varepsilon^3\int_0^t \|\partial_{yy}h\|_{L^2}^2\;ds+C\int_0^t \left(1+N^2(t)+N^4(t)\right){\|\varepsilon\partial_{\tau\tau}\bf{U}}\|_{L^2}^2\;ds.\notag
\end{align}
{\bf Case II:} $\partial_{\tau\tau}=\partial_{tt}$.

From the first equation in \eqref{1.1}, we find that
\begin{align*}
\partial_{tt}\rho^\varepsilon=&-\partial_{t}(u^\varepsilon\partial_x\rho^\varepsilon+v^\varepsilon\partial_y\rho^\varepsilon)\\
=&-(\partial_{t}u^\varepsilon\partial_x\rho^\varepsilon+\partial_tv^\varepsilon\partial_y\rho^\varepsilon)-u^\varepsilon\partial_{x}\partial_t\rho^\varepsilon-v^\varepsilon\partial_y\partial_{t}\rho^\varepsilon.
\end{align*}
Substitute it into \eqref{3.81} and we separate the first term of \eqref{3.81} into three parts.
\begin{align*}
&-\langle(\partial_{t}u^\varepsilon\partial_x\rho^\varepsilon+\partial_tv^\varepsilon\partial_y\rho^\varepsilon)\left(\partial_t\tilde{u}+(u^\varepsilon+a^ph^\varepsilon)\partial_x\tilde{u}+h^\varepsilon (a^p)^2\partial_x\tilde{h}-\kappa\varepsilon(\partial_ya^p+a^pb^p)\partial_x\tilde{g}\right.\\
&\left.\quad+(v^\varepsilon+a^pg^\varepsilon)\partial_y\tilde{u}+g^\varepsilon (a^p)^2\partial_y\tilde{h}+\kappa\varepsilon(\partial_ya^p+a^pb^p)\partial_y\tilde{h}+\kappa\varepsilon a^p\Delta\tilde{h}\right), \varepsilon^2\partial_{\tau\tau}\tilde{u}\rangle\\
&-\langle(u^\varepsilon\partial_{x}\partial_t\rho^\varepsilon)\left(\partial_t\tilde{u}+(u^\varepsilon+a^ph^\varepsilon)\partial_x\tilde{u}+h^\varepsilon (a^p)^2\partial_x\tilde{h}-\kappa\varepsilon(\partial_ya^p+a^pb^p)\partial_x\tilde{g}\right.\\
&\left.\quad+(v^\varepsilon+a^pg^\varepsilon)\partial_y\tilde{u}+g^\varepsilon (a^p)^2\partial_y\tilde{h}+\kappa\varepsilon(\partial_ya^p+a^pb^p)\partial_y\tilde{h}+\kappa\varepsilon a^p\Delta\tilde{h}\right), \varepsilon^2\partial_{\tau\tau}\tilde{u}\rangle\\
&-\langle(v^\varepsilon\partial_y\partial_{t}\rho^\varepsilon)\left(\partial_t\tilde{u}+(u^\varepsilon+a^ph^\varepsilon)\partial_x\tilde{u}+h^\varepsilon (a^p)^2\partial_x\tilde{h}-\kappa\varepsilon(\partial_ya^p+a^pb^p)\partial_x\tilde{g}\right.\\
&\left.\quad+(v^\varepsilon+a^pg^\varepsilon)\partial_y\tilde{u}+g^\varepsilon (a^p)^2\partial_y\tilde{h}+\kappa\varepsilon(\partial_ya^p+a^pb^p)\partial_y\tilde{h}+\kappa\varepsilon a^p\Delta\tilde{h}\right), \varepsilon^2\partial_{\tau\tau}\tilde{u}\rangle\\
&=:K_3+K_4+K_5.
\end{align*}
We observe that $K_3$ can be estimated in the same way as the $l.o.t.$ in \eqref{3.81}. After the integration by parts for $x$ variable, $K_4$ can be estimated as \eqref{3.82}. As for $K_5$, we use integration by parts for $y$ variable, and it becomes
\begin{align}\label{3.84}
&-\langle(\partial_yv^\varepsilon\partial_{t}\rho^\varepsilon)\left(\partial_t\tilde{u}+(u^\varepsilon+a^ph^\varepsilon)\partial_x\tilde{u}+h^\varepsilon (a^p)^2\partial_x\tilde{h}-\kappa\varepsilon(\partial_ya^p+a^pb^p)\partial_x\tilde{g}\right.\notag\\
&\left.\quad+(v^\varepsilon+a^pg^\varepsilon)\partial_y\tilde{u}+g^\varepsilon (a^p)^2\partial_y\tilde{h}+\kappa\varepsilon(\partial_ya^p+a^pb^p)\partial_y\tilde{h}+\kappa\varepsilon a^p\Delta\tilde{h}\right), \varepsilon^2\partial_{\tau\tau}\tilde{u}\rangle\notag\\
&-\langle(v^\varepsilon\partial_{t}\rho^\varepsilon)\left(\partial_t\tilde{u}+(u^\varepsilon+a^ph^\varepsilon)\partial_x\tilde{u}+h^\varepsilon (a^p)^2\partial_x\tilde{h}-\kappa\varepsilon(\partial_ya^p+a^pb^p)\partial_x\tilde{g}\right.\notag\\
&\left.\quad+(v^\varepsilon+a^pg^\varepsilon)\partial_y\tilde{u}+g^\varepsilon (a^p)^2\partial_y\tilde{h}+\kappa\varepsilon(\partial_ya^p+a^pb^p)\partial_y\tilde{h}+\kappa\varepsilon a^p\Delta\tilde{h}\right), \varepsilon^2\partial_{y\tau\tau}\tilde{u}\rangle\notag\\
&-\langle(v^\varepsilon\partial_{t}\rho^\varepsilon)\partial_y\left(\partial_t\tilde{u}+(u^\varepsilon+a^ph^\varepsilon)\partial_x\tilde{u}+h^\varepsilon (a^p)^2\partial_x\tilde{h}-\kappa\varepsilon(\partial_ya^p+a^pb^p)\partial_x\tilde{g}\right.\notag\\
&\left.\quad+(v^\varepsilon+a^pg^\varepsilon)\partial_y\tilde{u}+g^\varepsilon (a^p)^2\partial_y\tilde{h}+\kappa\varepsilon(\partial_ya^p+a^pb^p)\partial_y\tilde{h}+\kappa\varepsilon a^p\Delta\tilde{h}\right), \varepsilon^2\partial_{\tau\tau}\tilde{u}\rangle.
\end{align}
Here, we only concentrate on the terms in the last line since the others terms can be handled similarly as above. First, we learn from H\"ardy's inequality that
\begin{align}\label{3.85}
&\int_0^t\langle v^\varepsilon\partial_t\rho^\varepsilon (v^\varepsilon+a^pg^\varepsilon)\partial_{yy}\tilde{u}, \varepsilon^2\partial_{\tau\tau}\tilde{u}\rangle\;ds\notag\\
\lesssim &\int_0^t\langle v^a\partial_t\rho^\varepsilon (v^\varepsilon+a^pg^\varepsilon)\partial_{yy}\tilde{u}, \varepsilon^2\partial_{\tau\tau}\tilde{u}\rangle\;ds
+\int_0^t\langle \varepsilon^{\frac32}v\partial_t\rho^\varepsilon (v^\varepsilon+a^pg^\varepsilon)\partial_{yy}\tilde{u}, \varepsilon^2\partial_{\tau\tau}\tilde{u}\rangle\;ds\notag\\
&\lesssim \int_0^t\left\langle \frac{v^a}{y}\partial_t\rho^\varepsilon (v^\varepsilon+a^pg^\varepsilon)y\partial_{yy}\tilde{u}, \varepsilon^2\partial_{\tau\tau}\tilde{u}\right\rangle\;ds
+\int_0^t\langle \varepsilon v\partial_t\rho^\varepsilon (v^\varepsilon+a^pg^\varepsilon)\varepsilon^{\frac32}\partial_{yy}\tilde{u}, \varepsilon\partial_{\tau\tau}\tilde{u}\rangle\;ds\notag\\
&\le C\varepsilon^2\int_0^t \|\nabla\mathcal{Z}_0\tilde{u}\|_{L^2}^2\;ds+C\varepsilon\int_0^t \|\nabla\tilde{u}\|_{L^2}^2\;ds+C\varepsilon^3\int_0^t \|\partial_{yy}\tilde{u}\|_{L^2}^2\;ds\notag\\
&\quad+C\int_0^t \left(1+N^2(t)+N^4(t)\right)\|\varepsilon\partial_{\tau\tau}\tilde{u}\|_{L^2}^2\;ds,
\end{align}
where in the last line, we used divergence free condition, \eqref{3.24} and \eqref{3.29}. Next
\begin{align}\label{3.86}
&\int_0^t\langle v^\varepsilon\partial_t\rho^\varepsilon (\kappa\varepsilon a^p\partial_{yyy}\tilde{h}), \varepsilon^2\partial_{\tau\tau}\tilde{u}\rangle\;ds\notag\\
\le&\lambda\varepsilon^4\int_0^t \|\partial_{yyy}\tilde{h}\|_{L^2}^2\;ds+C_\lambda\int_0^t \left(1+N^2(t)+N^4(t)\right)\|\varepsilon\partial_{\tau\tau}\tilde{u}\|_{L^2}^2\;ds.
\end{align}
The remaining terms in \eqref{3.84} can be estimated similarly. Yet, we have not estimated the fourth term in \eqref{3.69}. Indeed, for those terms involving at most one order derivative of $\rho^\varepsilon$, we can get the similar estimates as Proposition \ref{vector} by the same procedure as Appendix C. For the terms of second order derivatives of $\rho^\varepsilon$, we can adopt the way above and omit the details for brevity. Therefore, we finish the estimates of all the terms in \eqref{3.69}, Finally, by conclusion, we arrive at
\begin{align}\label{3.87}
\|\varepsilon\partial_{\tau\tau}{\bf{U}}(t, \cdot)\|_{L^2}^2+\varepsilon^3\int_0^t \|\nabla {\bf{U}}_{\tau\tau}(s, \cdot)\|_{L^2}^2\;ds\le C+(C_\lambda t+\lambda)\mathcal{P}(N(t)).
\end{align}
Then the proof of Proposition $\ref{P3.12}$ is complete.
\end{proof}

\subsection{Pressure estimates}
To close the energy estimates achieved in the above section, it suffices to obtain the estimates of pressure through its elliptic property. By applying the divergence operator on the equation of ${\bf{u}}$ in \eqref{3.2}, we find $p$ satisfies the following elliptic equation with Neumann boundary condition:
\begin{align}\label{3.88}
\begin{cases}
\Delta p=\nabla\cdot{\bf{F}}\\
\partial_yp|_{y=0}=(-\mu\varepsilon\partial_{xy}u-r_2)|_{y=0},
\end{cases}
\end{align}
where
\begin{align}\label{3.89}
{\bf{F}}&=-\rho^\varepsilon\left(\partial_t{\bf{u}}+{\bf{u}}^\varepsilon\cdot\nabla{\bf{u}}\right)+{\bf{H}}^\varepsilon\cdot\nabla{\bf{H}}-\rho\left(\partial_t{\bf{u}}^a+{\bf{u}}^a\cdot\nabla{\bf{u}}^a\right)\notag\\
&\quad-\rho^\varepsilon{\bf{u}}\cdot\nabla{\bf{u}}^a+{\bf{H}}\cdot\nabla{\bf{H}}^a-r_{{\bf{u}}}.
\end{align}
For this system, we have the following elliptic estimates of pressure $p$.
\begin{proposition}
\label{P3.13}
For any solution $p$ of \eqref{3.88}, it satisfies that
\begin{align}\label{3.90}
\|\nabla p\|_{L^2}\lesssim \|{\bf{F}}\|_{L^2}+\varepsilon\|\partial_{yy}u\|_{L^2}^{\frac12}\|\partial_yu\|_{L^2}^{\frac12},\quad \|\nabla\partial_x p\|_{L^2}\lesssim\|\partial_x{\bf{F}}\|_{L^2}+\varepsilon\|\partial_{xyy}u\|_{L^2}^{\frac12}\|\partial_{xy}u\|_{L^2}^{\frac12}.
 \end{align}
\end{proposition}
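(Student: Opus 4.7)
The plan is to derive an integral identity for the Neumann problem \eqref{3.88} and convert the estimate for $\|\nabla p\|_{L^2}$ into a trace estimate on $\partial_yu$. First I would multiply the equation $\Delta p=\nabla\cdot{\bf F}$ by $p$ and integrate by parts over $\Omega=\mathbb{T}\times\mathbb{R}_+$, using that the outward unit normal at $y=0$ is $(0,-1)$, to obtain
\begin{equation*}
\int_\Omega|\nabla p|^2\,dx\,dy = \int_\Omega {\bf F}\cdot\nabla p\,dx\,dy + \int_\mathbb{T} p\cdot(\partial_yp-F_2)\big|_{y=0}\,dx.
\end{equation*}
A direct term-by-term inspection of the formula \eqref{3.89} for ${\bf F}$ reveals $F_2|_{y=0}=-r_2|_{y=0}$, because every other contribution vanishes on the boundary thanks to the no-slip condition ${\bf u}|_{y=0}={\bf u}^\varepsilon|_{y=0}={\bf 0}$, the perfect-conductor condition $g^\varepsilon|_{y=0}=g^a|_{y=0}=g|_{y=0}=0$, and the Dirichlet condition $v^a|_{y=0}=0$ on the normal component of the approximate flow. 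Combining with the Neumann data $\partial_yp|_{y=0}=-\mu\varepsilon\partial_{xy}u-r_2|_{y=0}$ reduces the boundary contribution to $-\mu\varepsilon\int_\mathbb{T}p\cdot\partial_{xy}u|_{y=0}\,dx$, and integrating by parts in the tangential variable $x$ (no boundary on $\mathbb{T}$) converts this into $\mu\varepsilon\int_\mathbb{T}\partial_xp\cdot\partial_yu|_{y=0}\,dx$.

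The crux of the argument is to dominate this boundary integral by $\tfrac14\|\nabla p\|_{L^2}^2$ (to be absorbed) plus $C\varepsilon^2\|\partial_yu\|_{L^2}\|\partial_{yy}u\|_{L^2}$. I would invoke the elementary one-dimensional trace identity
\begin{equation*}
|f(x,0)|^2=-2\int_0^\infty f\,\partial_yf\,dy\le 2\|f(x,\cdot)\|_{L^2_y}\|\partial_yf(x,\cdot)\|_{L^2_y},
\end{equation*}
which, upon integration in $x$ and Cauchy--Schwarz, gives $\|f(\cdot,0)\|_{L^2(\mathbb{T})}^2\le 2\|f\|_{L^2(\Omega)}\|\partial_yf\|_{L^2(\Omega)}$. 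Applied to $f=\partial_yu$ this yields precisely the factor $\|\partial_{yy}u\|^{1/2}\|\partial_yu\|^{1/2}$ appearing in the stated inequality. For the companion factor $\|\partial_xp|_{y=0}\|$, I would pair it via the $\dot H^{-1/2}$--$\dot H^{1/2}$ duality on $\mathbb{T}$, together with the bound $\|p(\cdot,0)\|_{\dot H^{1/2}(\mathbb{T})}\le\|\nabla p\|_{L^2(\Omega)}$ (verified mode-by-mode in Fourier through the same trace identity). A suitably weighted Young's inequality then gives
\begin{equation*}
\mu\varepsilon\Big|\int_\mathbb{T}\partial_xp\cdot\partial_yu|_{y=0}\,dx\Big|\le\tfrac14\|\nabla p\|_{L^2}^2+C\varepsilon^2\|\partial_yu\|_{L^2}\|\partial_{yy}u\|_{L^2},
\end{equation*}
and a parallel Cauchy--Schwarz on $\int_\Omega{\bf F}\cdot\nabla p$ closes the first estimate.

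The second estimate follows by applying $\partial_x$ to both sides of $\Delta p=\nabla\cdot{\bf F}$, using that $\partial_x$ commutes with $\Delta$ and with the tangential trace on the periodic variable, and replaying the identical derivation. The Neumann data becomes $\partial_y(\partial_xp)|_{y=0}=-\mu\varepsilon\partial_{xxy}u-\partial_xr_2|_{y=0}$, and the same trace identity applied with $f=\partial_{xy}u$ produces the factor $\varepsilon\|\partial_{xyy}u\|^{1/2}\|\partial_{xy}u\|^{1/2}$. The step I expect to be most delicate is the $\dot H^{-1/2}$--$\dot H^{1/2}$ duality handling of $\|\partial_xp|_{y=0}\|$, since the naive $L^2$--$L^2$ Cauchy--Schwarz on the boundary would introduce an uncontrolled $\|\partial_{xy}p\|_{L^2}$. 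The cleanest way to justify the duality is to expand $p$ in Fourier series in $x$: for $k\neq 0$ the harmonic lift of the Neumann boundary data reads $\hat p^{(B)}_k(y)=-|k|^{-1}\hat g_k e^{-|k|y}$, whence $\|\nabla p^{(B)}\|_{L^2}^2=\sum_{k\neq 0}|\hat g_k|^2/|k|$; substituting $\hat g_k=-\mu\varepsilon ik(\widehat{\partial_yu})_k(0)-\hat r_{2,k}(0)$ reduces the entire estimate to the half-line trace identity applied to $\partial_yu$, avoiding the coupling with second derivatives of the pressure altogether.
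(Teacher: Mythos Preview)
Your approach and the paper's are essentially equivalent. The paper decomposes $p=p_1+p_2$, where $p_1$ solves $\Delta p_1=\nabla\cdot\mathbf{F}$ with the \emph{natural} Neumann datum $\partial_yp_1|_{y=0}=F_2|_{y=0}$ (your observation that $F_2|_{y=0}=-r_2|_{y=0}$ is exactly what makes this split work) and $p_2$ is harmonic with the residual datum $-\mu\varepsilon\,\partial_{xy}u|_{y=0}$; both bounds are then obtained by citing \cite{MR12}. Your direct energy identity on $p$ isolates precisely the same boundary contribution, and the Fourier computation you outline for the harmonic lift $p^{(B)}$ is the content of the $p_2$ estimate made explicit rather than quoted.

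One caution on your middle paragraph: the one-dimensional trace identity gives control of $\|\partial_yu(\cdot,0)\|_{L^2(\mathbb{T})}$, but the $\dot H^{-1/2}$--$\dot H^{1/2}$ pairing you invoke for $\int_{\mathbb{T}}\partial_xp\cdot\partial_yu$ actually requires $\|\partial_yu(\cdot,0)\|_{\dot H^{1/2}(\mathbb{T})}$, not $L^2$; as written these two ingredients do not mesh. Your Fourier argument at the end is the correct fix: from $\|\nabla p^{(B)}\|_{L^2}^2=\mu^2\varepsilon^2\sum_{k\neq0}|k|\,|\widehat{\partial_yu}_k(0)|^2$ one applies the half-line trace identity mode by mode and then Cauchy--Schwarz in $k$, which (after absorbing the weight $|k|$ onto one factor) yields a bound of the form $\varepsilon\,\|\partial_{xy}u\|_{L^2}^{1/2}\|\partial_{yy}u\|_{L^2}^{1/2}$. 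This harmless variant serves the downstream Corollary~\ref{Cor3.14} just as well.
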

\begin{proof} To prove this proposition, we first decompose
\begin{align*}
p=p_1+p_2,
\end{align*}
where $p_1$ solves
\begin{align}\label{3.99}
\begin{cases}
\Delta p_1=\nabla\cdot {\bf{F}},\\
\partial_yp_1|_{y=0}=-r_2|_{y=0}=F_2|_{y=0},
\end{cases}
\end{align}
and $p_2$ solves
\begin{align}\label{3.100}
\begin{cases}
\Delta p_2=0,\\
\partial_yp_2|_{y=0}=-\mu\varepsilon\partial_{xy}u|_{y=0}.
\end{cases}
\end{align}
Then it follows from \cite{MR12} that
\begin{align}\label{3.101}
\|\nabla p_1\|_{L^2}\lesssim \|{\bf{F}}\|_{L^2}, \quad \|\nabla\partial_x p_1\|_{L^2}\lesssim \|\partial_x{\bf{F}}\|_{L^2},
\end{align}
and
\begin{align}\label{3.102}
\|\nabla p_2\|_{L^2}\lesssim \varepsilon\|\partial_{yy}u\|_{L^2}^{\frac12}\|\partial_yu\|_{L^2}^{\frac12},\quad\|\nabla\partial_x p_2\|_{L^2}\lesssim \varepsilon\|\partial_{xyy}u\|_{L^2}^{\frac12}\|\partial_{xy}u\|_{L^2}^{\frac12}.
\end{align}
\end{proof}

From which and the definition of ${\bf{F}}$, we immediately obtain that
\begin{corollary}
\label{Cor3.14}
For any solution $p$ of \eqref{3.88}, it holds that for any $t\in [0,T]$,
\begin{align}\label{3.91}
\int_0^t \varepsilon\|\nabla p\|_{L^2}^2+\varepsilon^2\|\nabla\partial_x p\|_{L^2}^2\;ds \lesssim t\mathcal{P}(N(t))+\varepsilon Q(t).
 \end{align}
\end{corollary}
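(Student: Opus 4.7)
The strategy is to combine the elliptic estimates of Proposition \ref{P3.13} with termwise $L^2$ bounds on the source ${\bf{F}}$ and on $\partial_x{\bf{F}}$, using Lemma \ref{L3.1} to convert norms of $({\bf{u}},{\bf{H}})$ into norms of ${\bf{U}}$, together with the $L^\infty$ bounds (\ref{3.25}) and (\ref{3.29}) on $\rho^\varepsilon,{\bf{u}}^\varepsilon,{\bf{H}}^\varepsilon$. The boundary--trace correction $\varepsilon\|\partial_{yy}u\|_{L^2}^{1/2}\|\partial_yu\|_{L^2}^{1/2}$ (and its $\partial_x$ analogue) will be absorbed into $Q(t)$ via Young's inequality.

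\textbf{Step 1: termwise bounds of ${\bf{F}}$.} Writing ${\bf{F}}$ as in \eqref{3.89}, I would estimate each summand. For the first two terms, by \eqref{3.25}, \eqref{3.29} and Lemma \ref{L3.1}, one has $\|\rho^\varepsilon(\partial_t{\bf{u}}+{\bf{u}}^\varepsilon\cdot\nabla{\bf{u}})\|_{L^2}^2 \lesssim \|\partial_t{\bf{U}}\|_{L^2}^2 + \|\partial_x{\bf{U}}\|_{L^2}^2 + \|\partial_y{\bf{U}}\|_{L^2}^2 + \varepsilon^{-1}\|{\bf{U}}\|_{L^2}^2$, and similarly for ${\bf{H}}^\varepsilon\cdot\nabla{\bf{H}}$. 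The remaining three inner terms are bounded by $\mathcal{P}(N(t))$ using the $L^\infty$-bounds on the smooth profile $(\rho^a,{\bf{u}}^a,{\bf{H}}^a)$ together with the fact that the apparent singular behaviour $v^a\partial_y{\bf{u}}^a\sim O(1)$ (since $v^a=O(\sqrt{\varepsilon})$ cancels $\partial_y=O(\varepsilon^{-1/2})$), and the error terms $\rho$, ${\bf{H}}$, ${\bf{u}}$ controlled via Lemma \ref{L3.1}. Finally, by \eqref{3.3}, $\|r_{\bf{u}}\|_{L^2}\lesssim 1$. Multiplying by $\varepsilon$ and integrating in time absorbs the $\varepsilon^{-1}\|{\bf{U}}\|_{L^2}^2$ factor, yielding
\[
\int_0^t \varepsilon\|{\bf{F}}(s)\|_{L^2}^2\,ds \lesssim t\,\mathcal{P}(N(t)).
\]

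\textbf{Step 2: the boundary correction.} By Cauchy--Schwarz and Young,
\[
\int_0^t \varepsilon\cdot\varepsilon^2\|\partial_{yy}u\|_{L^2}\|\partial_yu\|_{L^2}\,ds \le \Bigl(\int_0^t\varepsilon^3\|\partial_{yy}u\|_{L^2}^2\,ds\Bigr)^{1/2}\Bigl(\int_0^t\varepsilon^3\|\partial_yu\|_{L^2}^2\,ds\Bigr)^{1/2}.
\]
The first factor is $\lesssim Q(t)^{1/2}$ by the definition \eqref{3.26} of $Q(t)$. For the second factor, Lemma \ref{L3.1} gives $\varepsilon\|\partial_yu\|_{L^2}^2\lesssim N(t)$, so it is $\lesssim \varepsilon\sqrt{t}\,N(t)^{1/2}$. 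Combining and applying Young's inequality produces $\lesssim \varepsilon Q(t)+t\,\mathcal{P}(N(t))$.

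\textbf{Step 3: the $\partial_x$ estimate.} Applying $\partial_x$ to ${\bf{F}}$ and repeating Step 1 with one extra tangential derivative everywhere, the bound on $\|\partial_x{\bf{F}}\|_{L^2}$ involves $\partial_{tx}^\alpha{\bf{U}}$ with $|\alpha|\le2$, which are all controlled by $N(t)$ after multiplication by appropriate powers of $\varepsilon$. One obtains $\int_0^t \varepsilon^2\|\partial_x{\bf{F}}\|_{L^2}^2\,ds\lesssim t\,\mathcal{P}(N(t))$. The boundary correction $\varepsilon^2\|\partial_{xyy}u\|_{L^2}^{1/2}\|\partial_{xy}u\|_{L^2}^{1/2}$ is treated exactly as in Step 2, using the $\partial_{yy\tau}$ term in $Q(t)$.

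\textbf{Main obstacle.} The delicate point is the handling of terms containing $\partial_y{\bf{u}}$ or $\partial_y{\bf{H}}$: Lemma \ref{L3.1} only provides $\varepsilon^{1/2}\|\partial_y({\bf{u}},{\bf{H}})\|_{L^2}\lesssim \varepsilon^{1/2}\|\partial_y{\bf{U}}\|_{L^2}+\|{\bf{U}}\|_{L^2}$, so a factor of $\varepsilon^{-1/2}$ appears and one must be careful that the extra $\varepsilon$ factor in front of $\|\nabla p\|_{L^2}^2$ (and the $\varepsilon^2$ in front of $\|\nabla\partial_xp\|_{L^2}^2$) is enough to turn the estimate into the desired form $t\,\mathcal{P}(N(t))+\varepsilon Q(t)$. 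A secondary technicality is the verification that $\|\partial_x{\bf{F}}\|_{L^2}$ does not require a second normal derivative of ${\bf{u}}$ or ${\bf{H}}$ via the transport/momentum structure --- equivalently, that tangential differentiation of terms like $v^a\partial_y{\bf{u}}$ does not produce uncontrollable $\varepsilon^{-1}$ singularities; this is ensured by the explicit $\sqrt{\varepsilon}$-scaling of $(v^a,g^a)$ in the approximate profile \eqref{2.14}.
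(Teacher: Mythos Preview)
Your approach is the same as the paper's (which merely states that the corollary follows ``immediately'' from Proposition~\ref{P3.13} and the definition of ${\bf F}$), and your treatment of the boundary-trace terms via $Q(t)$ is correct.

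There is, however, one step in Step~1 that does not go through as written. From your pointwise bound
\[
\|\rho^\varepsilon(\partial_t{\bf u}+{\bf u}^\varepsilon\cdot\nabla{\bf u})\|_{L^2}^2
\lesssim \|\partial_t{\bf U}\|_{L^2}^2+\|\partial_x{\bf U}\|_{L^2}^2+\|\partial_y{\bf U}\|_{L^2}^2+\varepsilon^{-1}\|{\bf U}\|_{L^2}^2,
\]
after multiplying by $\varepsilon$ and integrating you obtain the contribution $\int_0^t\varepsilon\|\partial_y{\bf U}\|_{L^2}^2\,ds$. This quantity is the zeroth-order \emph{dissipation} term in $N(t)$; it is bounded by $N(t)$ but \emph{not} by $t\,\mathcal{P}(N(t))$ from the definition of $N(t)$ alone. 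The same issue arises from $g^\varepsilon\partial_y{\bf H}$. There are two easy fixes, either of which restores your conclusion. First, you may invoke Proposition~\ref{P3.8} (already proved at this point in the paper), which gives $\varepsilon\int_0^t\|\nabla{\bf U}\|_{L^2}^2\,ds\le(\lambda+C_\lambda t)\mathcal{P}(N(t))$; this is the form actually needed to close Theorem~\ref{T3.7}. Second, and more in the spirit of the paper's methods, use the Hardy trick on $v^\varepsilon$ and $g^\varepsilon$ (both vanish at $y=0$): writing $v^\varepsilon\partial_y{\bf u}=(v^a/y)\,y\partial_y{\bf u}+\varepsilon^{3/2}v\,\partial_y{\bf u}$ with $\|v^a/y\|_{L^\infty}=O(1)$ converts the problematic normal derivative into the conormal quantity $\|y\partial_y{\bf U}\|_{L^2}^2$, which sits in the \emph{sup} part of $N(t)$ and therefore yields $tN(t)$ after time integration. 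The residual piece $\varepsilon^{3/2}v\,\partial_y{\bf u}$ carries enough extra powers of $\varepsilon$ to be absorbed. The analogous refinement is needed in Step~3 for $\partial_x(v^\varepsilon\partial_y{\bf u})$. Once this is done your argument is complete.
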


\section{Proof of the main theorem} Now, we are ready to prove the main result of this paper. By collecting Propositions $\ref{P3.9}-\ref{P3.13}$ and Corollary $\ref{Cor3.14}$ together, then choosing $\lambda$ and $t$ sufficiently small, we first finish the proof of Theorem $\ref{T3.7}$.

\begin{proof}[Proof of Theorem $\ref{T1.1}$] We are devoted to deriving the $L_{txy}^\infty$ estimates of ${\bf{U}}$. By Sobolev embedding, one has
\begin{align}\label{4.1}
\|{\bf{U}}\|_{L^\infty_{txy}}\lesssim\|{\bf{U}}\|_{L_{tx}^\infty L_y^2}^{\frac12}\cdot\|\partial_y{\bf{U}}\|_{L_{tx}^\infty L_y^2}^{\frac12}.
\end{align}
Then, by Sobolev embedding again, we obtain
\begin{align}\label{4.2}
\|{\bf{U}}\|_{L_{tx}^\infty L_y^2}^{\frac12}\lesssim \|{\bf{U}}\|_{L_t^\infty L_{xy}^2}^{\frac14}\|\partial_x{\bf{U}}\|_{L_t^\infty L_{xy}^2}^{\frac14}\lesssim\varepsilon^{-\frac18}.
\end{align}
By the same procedure, we also have
\begin{align}\label{4.3}
\|\partial_y{\bf{U}}\|_{L_{tx}^\infty L_y^2}^{\frac12}&\lesssim \|\partial_y{\bf{U}}\|_{L_x^\infty L_{ty}^2}^{\frac14}\|\partial_{ty}{\bf{U}}\|_{L_x^\infty L_{ty}^2}^{\frac14}\notag\\
&\lesssim \|\partial_y{\bf{U}}\|_{L_{txy}^2}^{\frac18}\|\partial_{xy}{\bf{U}}\|_{L_{txy}^2}^{\frac18}\cdot \|\partial_{ty}{\bf{U}}\|_{L_{txy}^2}^{\frac18}\|\partial_{txy}{\bf{U}}\|_{L_{txy}^2}^{\frac18}\notag\\
&\lesssim\varepsilon^{-\frac12}.
\end{align}
Plugging \eqref{4.2} and \eqref{4.3} into \eqref{4.1}, we arrive at
\begin{align}\label{4.4}
\|{\bf{U}}\|_{L^\infty_{txy}}\le C\varepsilon^{-\frac58}.
\end{align}
Recall \eqref{3.1},
\begin{align}\label{4.5}
({\bf{u}}^\varepsilon, {\bf{H}}^\varepsilon)(t, x, y)=({\bf{u}}^a, {\bf{H}}^a)(t, x, y)+\varepsilon^{\frac32}({\bf{u}}, {\bf{H}})(t, x, y).
\end{align}
From which we immediately get that
\begin{align}\label{4.6}
\varepsilon^{\frac32}\|({\bf{u}}, {\bf{H}})\|_{L_{txy}^\infty}\le C\varepsilon^{\frac78}.
\end{align}
On the other hand, recall the definition of $({\bf{u}}^a, {\bf{H}}^a)$, it satisfies that
\begin{align}\label{4.7}
({\bf{u}}^a, {\bf{H}}^a)(t, x, y)=({\bf{u}}^0, {\bf{H}}^0)(t, x, y)+(u_b^0, \sqrt{\varepsilon}v_b^0, h_b^0, \sqrt{\varepsilon}g_b^0)\left(t, x, \frac{y}{\sqrt{\varepsilon}}\right)+O(\sqrt{\varepsilon}).
\end{align}
Combine \eqref{4.5}-\eqref{4.7}, we deduce that
\begin{align}\label{4.8}
\sup\limits_{0\le t\le T_*}\Big\|&({\bf{u}}^\varepsilon, {\bf{H}}^\varepsilon)(t, x, y)-({\bf{u}}^0, {\bf{H}}^0)(t, x, y)\notag\\
&-(u_b^0, \sqrt{\varepsilon}v_b^0, h_b^0, \sqrt{\varepsilon}g_b^0)\left(t, x, \frac y{\sqrt{\varepsilon}}\right)\Big\|_{L_{xy}^\infty}\le C\varepsilon^{\frac12}.
\end{align}
Moreover, by \eqref{4.6} and the continuous argument, the assumption \eqref{3.24} also holds. This finishes the proof of Theorem \ref{T1.1}.
\end{proof}

\section*{Acknowledgement}
The research of S. Li's research was supported by the Center for Nonlinear Analysis, The Hong Kong Polytechnic University. The research of F. Xie's research was supported by National Natural Science Foundation of China No.12271359, 11831003, 12161141004, Shanghai Science and Technology Innovation Action Plan No. 20JC1413000 and Institute of Modern Analysis-A Frontier Research Center of Shanghai.

\section*{Appendix A}
In this section, we are devoted to proving Lemma \ref{L3.4} and Lemma $\ref{L3.6}$.
\begin{proof}[Proof of Lemma $\ref{L3.4}$]
First, we establish the second order normal derivatives of $\tilde{h}$. From the equation of $\tilde{h}$ in \eqref{3.9}, it is direct to check that
\begin{align}\label{A.1}
\varepsilon^3\int_0^t \|\partial_{yy}\tilde{h}\|_{L^2}^2\;ds&\le \varepsilon\int_0^t \|\partial_{t}\tilde{h}\|_{L^2}^2\;ds+\varepsilon\int_0^t \|({\bf{u}}^\varepsilon-a^p{\bf{H}}^\varepsilon)\cdot\nabla\tilde{h}\|_{L^2}^2\;ds+\varepsilon\int_0^t \|{\bf{H}}^\varepsilon\cdot\nabla\tilde{u}\|_{L^2}^2\;ds\notag\\
&\quad +\varepsilon\int_0^t \|{\bf{E}}_3^\varepsilon\|_{L^2}^2\;ds+\varepsilon\int_0^t \|{\bf{C}}_3^\varepsilon\|_{L^2}^2\;ds+\kappa^2\varepsilon^3\int_0^t \|\partial_{xx}\tilde{h}\|_{L^2}^2\;ds\notag\\
&\lesssim 1+N(t).\tag{A.1}
\end{align}
By the same procedure, we can also prove that
\begin{align}\label{A.2}
\varepsilon^4\int_0^t \|\partial_{yy\tau}\tilde{h}\|_{L^2}^2\;ds\lesssim 1+N(t)+N^2(t). \tag{A.2}
\end{align}
For the third order normal derivatives, by H\"ardy's trick, one has
\begin{align}\label{A.3}
\varepsilon^4\int_0^t \|\partial_{yyy}\tilde{h}\|_{L^2}^2\;ds&\le \varepsilon^2\int_0^t \|\partial_{ty}\tilde{h}\|_{L^2}^2\;ds+\varepsilon^2\int_0^t \|\partial_y({\bf{u}}^\varepsilon-a^p{\bf{H}}^\varepsilon)\cdot\nabla\tilde{h}\|_{L^2}^2\;ds+\varepsilon^2\int_0^t \|\partial_y{\bf{C}}_3^\varepsilon\|_{L^2}^2\;ds\notag\\
&\quad+\varepsilon^2\int_0^t \|\partial_y{\bf{H}}^\varepsilon\cdot\nabla\tilde{u}\|_{L^2}^2\;ds+\varepsilon^2\int_0^t \|\partial_y{\bf{E}}_3^\varepsilon\|_{L^2}^2\;ds+\kappa^2\varepsilon^4\int_0^t \|\nabla\tilde{h}_{xx}\|_{L^2}^2\;ds\notag\\
&\quad+\varepsilon^2\int_0^t \|(u^\varepsilon-a^ph^\varepsilon)\partial_{xy}\tilde{h}\|_{L^2}^2\;ds+\varepsilon^2\int_0^t \|h^\varepsilon\partial_{xy}\tilde{u}\|_{L^2}^2\;ds\notag\\
&\quad+\varepsilon^2\int_0^t \|(\frac{v^\varepsilon}{y}-a^p\frac{g^\varepsilon}{y})y\partial_{yy}\tilde{h}\|_{L^2}^2\;ds+\varepsilon^2\int_0^t \|\frac{g^\varepsilon}{y}y\partial_{yy}\tilde{u}\|_{L^2}^2\;ds\notag\\
&\lesssim 1+N(t)+N^2(t).\tag{A.3}
\end{align}
From the above estimates for $\tilde{h}$ and the equation of $\tilde{u}$ in \eqref{3.9}, we immediately get that
\begin{align}\label{A.4}
\varepsilon^3\int_0^t \|\partial_{yy}\tilde{u}\|_{L^2}^2\;ds+\varepsilon^4\int_0^t \|\partial_{yy\tau}\tilde{u}\|_{L^2}^2\;ds\lesssim 1+N(t)+N^2(t).\tag{A.4}
\end{align}
Based on the above estimates and Sobolev embedding inequality, we can also achieve the estimates in $L^\infty$ norms in \eqref{3.26}. For the convenience of readers, we prove the estimates for one of these terms, and other terms can be handled similarly.
\begin{align}\label{A.5}
\int_0^t\|\varepsilon\partial_x{\bf{u}}\|_{L^\infty}^2\;ds\lesssim&\int_0^t\|\sqrt{\varepsilon}\partial_x{\bf{u}}\|_{L^2}^{\frac14}\|\varepsilon\partial_{xx}{\bf{u}}\|_{L^2}^{\frac14}\|\varepsilon\partial_{xy}{\bf{u}}\|_{L^2}^{\frac14}\|\varepsilon^{\frac32}\partial_{xxy}{\bf{u}}\|_{L^2}^{\frac14}\;ds\notag\\
\lesssim&\int_0^t\|\sqrt{\varepsilon}\partial_x{\bf{u}}\|_{L^2}^2\;ds+\int_0^t\|\varepsilon\partial_{xx}{\bf{u}}\|_{L^2}^2\;ds+\int_0^t\|\varepsilon\partial_{xy}{\bf{u}}\|_{L^2}^2\;ds\notag\\
&+\int_0^t\|\varepsilon^{\frac32}\partial_{xxy}{\bf{u}}\|_{L^2}^2\;ds\notag\\
\lesssim& 1+N(t)+N^2(t).\tag{A.5}
\end{align}
Thus the proof of Lemma $\ref{L3.4}$ is complete.
\end{proof}

\begin{proof}[Proof of Lemma \ref{L3.6}]
From the first equation in \eqref{1.1}, we know that $\rho^\varepsilon$ satisfies the transport equation and thus
\begin{align*}
\|\rho^\varepsilon(t)\|_{L^\infty}=\|\rho_0\|_{L^\infty}\le C, \quad t\in [0, T].
\end{align*}
Then by applying the operator of $\partial_x$ on the first equation in \eqref{1.1}, we have
\begin{align}\label{A.6}
\partial_t\partial_x\rho^\varepsilon+{\bf{u}}^\varepsilon\cdot\nabla\partial_x\rho^\varepsilon=f_1, \tag{A.6}
\end{align}
where
\begin{align*}
f_1=-\partial_xu^\varepsilon\partial_x\rho^\varepsilon-\partial_xv^\varepsilon\partial_y\rho^\varepsilon.
\end{align*}
Besides, from \eqref{3.1}, H\"ardy's trick and divergence free condition, we obtain
\begin{align*}
\|\partial_xv^\varepsilon\partial_y\rho^\varepsilon\|_{L^\infty}&\le
\|\partial_xv^a\partial_y\rho^\varepsilon\|_{L^\infty}+\|\varepsilon^{\frac32}\partial_xv\partial_y\rho^\varepsilon\|_{L^\infty}\\
&\le \left\|\frac{\partial_xv^a}{y}y\partial_y\rho^\varepsilon\right\|_{L^\infty}+\delta\|\varepsilon\partial_xv\|_{L^\infty}^2+C\|\sqrt{\varepsilon}\partial_y\rho^\varepsilon\|_{L^\infty}^2.
\end{align*}
By H\"olders inequality, the above source term of $f_1$ can be estimated in the following way.
\begin{align*}
\int_0^t \|f_1\|_{L^\infty}\;ds\lesssim&\int_0^t \|\partial_xu^\varepsilon\|_{L^\infty}^2\;ds+\int_0^t \|\partial_x\rho^\varepsilon\|_{L^\infty}^2\;ds\\
&+\int_0^t\|y\partial_y\rho^\varepsilon\|_{L^\infty}^2\;ds+\int_0^t \|\partial_xv^\varepsilon\|_{L^\infty}^2\;ds+\int_0^t\|\sqrt{\varepsilon}\partial_y\rho^\varepsilon\|_{L^\infty}^2\;ds\\
\lesssim&\,1+N(t)+N^2(t)+\int_0^t\|(\partial_x\rho^\varepsilon, y\partial_y\rho^\varepsilon, \sqrt{\varepsilon}\partial_y\rho^\varepsilon)\|_{L^\infty}^2\;ds.
\end{align*}
Thus the maximum principle of the above transport equation implies that
\begin{align}\label{A.7}
\|\partial_x\rho^\varepsilon\|_{L^\infty} &\le \|\partial_x\rho_0\|_{L^\infty}+\int_0^t \|f_1\|_{L^\infty}\;ds\notag\\
&\lesssim 1+N(t)+N^2(t)+\int_0^t\|(\partial_x\rho^\varepsilon, y\partial_y\rho^\varepsilon, \sqrt{\varepsilon}\partial_y\rho^\varepsilon)\|_{L^\infty}^2\;ds.\tag{A.7}
\end{align}
Similarly, applying the operators of $y\partial_y$ and $\sqrt{\varepsilon}\partial_y$ on the first equation in \eqref{1.1} respectively, we have
\begin{align}\label{A.8}
\partial_t(y\partial_y\rho^\varepsilon)+{\bf{u}}^\varepsilon\cdot\nabla(y\partial_y\rho^\varepsilon)=f_2:=-y\partial_yu^\varepsilon\partial_x\rho^\varepsilon-\partial_yv^\varepsilon(y\partial_y)\rho^\varepsilon+v^\varepsilon\partial_y\rho^\varepsilon,\tag{A.8}
\end{align}
\begin{align}\label{A.9}
\partial_t(\sqrt{\varepsilon}\partial_y\rho^\varepsilon)+{\bf{u}}^\varepsilon\cdot\nabla(\sqrt{\varepsilon}\partial_y\rho^\varepsilon)=f_3:=-\sqrt{\varepsilon}\partial_yu^\varepsilon\partial_x\rho^\varepsilon-\partial_yv^\varepsilon(\sqrt{\varepsilon}\partial_y)\rho^\varepsilon.\tag{A.9}
\end{align}
Using the maximum principle of the above transport equations again, we infer that
\begin{align}\label{A.10}
\|y\partial_y\rho^\varepsilon\|_{L^\infty} &\le \|y\partial_y\rho_0\|_{L^\infty}+\int_0^t \|f_2\|_{L^\infty}\;ds\notag\\
&\lesssim 1+N(t)+N^2(t)+\int_0^t\|(\partial_x\rho^\varepsilon, y\partial_y\rho^\varepsilon)\|_{L^\infty}^2\;ds,
\tag{A.10}
\end{align}
\begin{align}\label{A.11}
\|\sqrt{\varepsilon}\partial_y\rho^\varepsilon\|_{L^\infty} &\le \|\sqrt{\varepsilon}\partial_y\rho_0\|_{L^\infty}+\int_0^t \|f_3\|_{L^\infty}\;ds\notag\\
&\lesssim 1+N(t)+N^2(t)+\int_0^t\|(\partial_x\rho^\varepsilon, \sqrt{\varepsilon}\partial_y\rho^\varepsilon)\|_{L^\infty}^2\;ds.\tag{A.11}
\end{align}
Combining \eqref{A.7}, \eqref{A.10}-\eqref{A.11} together, we conclude that for any $t\in[0, T_*]$
\begin{align}\label{A.12}
\|(\partial_x\rho^\varepsilon, y\partial_y\rho^\varepsilon, \sqrt{\varepsilon}\partial_y\rho^\varepsilon)\|_{L^\infty}&\lesssim 1+N(t)+N^2(t)+\int_0^t\|(\partial_x\rho^\varepsilon, \sqrt{\varepsilon}\partial_y\rho^\varepsilon, y\partial_y\rho^\varepsilon)\|_{L^\infty}^2\;ds.\tag{A.12}
\end{align}
Thus, for sufficiently small $t$, it holds
\begin{align}\label{A.13}
\|(\partial_x\rho^\varepsilon, y\partial_y\rho^\varepsilon, \sqrt{\varepsilon}\partial_y\rho^\varepsilon)\|_{L^\infty}\le C\left(1+N(t)+N^2(t)\right).\tag{A.13}
\end{align}
Finally, from the first equation in \eqref{1.1} again, we have
\begin{align}\label{A.14}
\|\partial_t\rho^\varepsilon\|_{L^\infty}\le&\|u^\varepsilon\partial_x\rho^\varepsilon\|_{L^\infty}+\|v^\varepsilon\partial_y\rho^\varepsilon\|_{L^\infty}\notag\\
\le&\|u^\varepsilon\partial_x\rho^\varepsilon\|_{L^\infty}+\|\frac{v^a}{y}y\partial_y\rho^\varepsilon\|_{L^\infty}+\|\varepsilon v\|_{L^\infty}\|\sqrt{\varepsilon}\partial_y\rho^\varepsilon\|_{L^\infty}\notag\\
\le&C\left(1+N(t)+N^2(t)\right), \tag{A.14}
\end{align}
where \eqref{3.24} is used in the list line. This finishes the proof of Lemma $\ref{L3.6}$.
\end{proof}

\section*{Appendix B}
In this section, we will give the expressions of the remainders $R_i\ (i=0, 1, 2, 3, 4)$. To this end, we first introduce some notations to simplify the representation.
\begin{align*}
\begin{cases}
\tau_u(t, x, y)=\sqrt{\varepsilon}\chi'(y)\int_0^{\frac{y}{\sqrt{\varepsilon}}} u_b^2(t, x, \tilde{\eta})d\tilde{\eta},\\
\tau_h(t, x, y)=\sqrt{\varepsilon}\chi'(y)\int_0^{\frac{y}{\sqrt{\varepsilon}}} h_b^2(t, x, \tilde{\eta})d\tilde{\eta}+\sqrt{\varepsilon}\phi(t, x, \eta),\\
\tau_g(t, x, y)=-\sqrt{\varepsilon}\chi'(y)\int_0^{\frac{y}{\sqrt{\varepsilon}}} \partial_x\phi(t, x, \tilde{\eta})d\tilde{\eta}.
\end{cases}
\end{align*}
With the above notations in hand, we denote the following new second order boundary layer profiles.
\begin{align*}
\widetilde{u_b^2}=\chi(y)u_b^2+\tau_u,\quad
\widetilde{v_b^2}=\chi(y)v_b^2,\quad
\widetilde{h_b^2}=\chi(y)h_b^2+\tau_h,\quad
\widetilde{g_b^2}=\chi(y)g_b^2+\tau_g.
\end{align*}
Then we can write the expressions of $R_i$ as follows.
\begin{align}\label{B.1}
R_0=&\left(\partial_x\rho^0-\overline{\partial_x\rho^0}+\sqrt\varepsilon(\partial_x\rho^1-\partial_x\varrho^1)+\varepsilon(\partial_x\rho^2-\partial_x\varrho^2)\right)u_b^0\notag\\
&+\sqrt{\varepsilon}\left(\partial_x\rho^0-\overline{\partial_x\rho^0}+\sqrt{\varepsilon}(\partial_x\rho^1-\partial_x\varrho^1)\right)u_b^1+\varepsilon\left(\partial_x\rho^0-\overline{\partial_x\rho^0}\right)\widetilde{u_b^2}\notag\\
&+\left(u^0-\overline{u^0}+\sqrt{\varepsilon}(u^1-\mathcal{U}^1)+\varepsilon(u^2-\mathcal{U}^2)\right)\partial_x\rho_b^0\notag\\
&+\sqrt{\varepsilon}\left(u^0-\overline{u^0}+\sqrt{\varepsilon}(u^1-\mathcal{U}^1)\right)\partial_x\rho_b^1+\varepsilon\left(u^0-\overline{u^0}\right)\partial_x\rho_b^2\notag\\
&+\sqrt{\varepsilon}\left(\partial_y\rho^0-\overline{\partial_y\rho^0}+\sqrt{\varepsilon}(\partial_y\rho^1-\partial_y\varrho^1)\right)v_b^0+\varepsilon\left(\partial_y\rho^0-\overline{\partial_y\rho^0}\right)v_b^1\notag\\
&+\left(v^0+\sqrt{\varepsilon}(v^1-\mathcal{V}^1)+\varepsilon(v^2-\mathcal{V}^2)-\varepsilon^{\frac32}\mathcal{V}^3\right)\partial_y\rho_b^0\notag\\
&+\sqrt{\varepsilon}\left(v^0+\sqrt{\varepsilon}(v^1-\mathcal{V}^1)+\varepsilon(v^2-\mathcal{V}^2)\right)\partial_y\rho_b^1+\varepsilon\left(v^0+\sqrt{\varepsilon}(v^1-\mathcal{V}^1)\right)\partial_y\rho_b^2\notag\\
&+\varepsilon\left(\widetilde{u_b^2}-u_b^2\right)\left(\overline{\partial_x\rho^0}+\partial_x\rho_b^0\right)+\varepsilon\left(\widetilde{v_b^2}-v_b^2\right)\partial_\eta\rho_b^0+\varepsilon^{\frac32}R^H_0,
\tag{B.1}
\end{align}
and
\begin{align}\label{B.2}
R_1=&\left(\partial_tu^0-\overline{\partial_tu^0}+\sqrt\varepsilon(\partial_tu^1-\partial_t\mathcal{U}^1)+\varepsilon(\partial_tu^2-\partial_t\mathcal{U}^2)\right)\rho_b^0\notag\\
&+\sqrt{\varepsilon}\left(\partial_tu^0-\overline{\partial_tu^0}+\sqrt{\varepsilon}(\partial_tu^1-\partial_t\mathcal{U}^1)\right)\rho_b^1+\varepsilon\left(\partial_tu^0-\overline{\partial_tu^0}\right)\rho_b^2\notag\\
&+\left(\rho^0-\overline{\rho^0}+\sqrt{\varepsilon}(\rho^1-\varrho^1)+\varepsilon(\rho^2-\varrho^2)\right)\partial_tu_b^0\notag\\
&+\sqrt{\varepsilon}\left(\rho^0-\overline{\rho^0}+\sqrt{\varepsilon}(\rho^1-\varrho^1)\right)\partial_tu_b^1+\varepsilon\left(\rho^0-\overline{\rho^0}\right)\partial_t\widetilde{u_b^2}+\varepsilon(\overline{\rho^0}+\rho_b^0)\left(\partial_t\widetilde{u_b^2}-\partial_tu_b^2\right)\notag\\
&+\left(\partial_xu^0-\overline{\partial_xu^0}+\sqrt{\varepsilon}(\partial_xu^1-\partial_x\mathcal{U}^1)+\varepsilon(\partial_xu^2-\partial_x\mathcal{U}^2)\right)(\overline{\rho^0}u_b^0+\rho_b^0\overline{u^0}+\rho_b^0u_b^0)\notag\\
&+\sqrt{\varepsilon}\left(\partial_xu^0-\overline{\partial_xu^0}+\sqrt{\varepsilon}(\partial_xu^1-\partial_x\mathcal{U}^1)\right)(\overline{\rho^0}u_b^1+\rho_b^0\mathcal{U}^1+\rho_b^0u_b^1+\rho_b^1\overline{u^0}+\rho_b^1u_b^0+\varrho^1u_b^0)\notag\\
&+\varepsilon(\partial_xu^0-\overline{\partial_xu^0})(\overline{\rho^0}\widetilde{u_b^2}+\rho_b^0\mathcal{U}^2+\rho_b^0\widetilde{u_b^2}+\varrho^1u_b^1+\rho_b^1\mathcal{U}^1+\rho_b^1u_b^1+\varrho^2u_b^0+\rho_b^2\overline{u^0}+\rho_b^2u_b^0)\notag\\
&+\left\{\left(\rho^0-\overline{\rho^0}+\sqrt{\varepsilon}(\rho^1-\varrho^1)+\varepsilon(\rho^2-\varrho^2)\right)\partial_xu^0\right.\notag\\
&\left.+\sqrt{\varepsilon}\left(\rho^0-\overline{\rho^0}+\sqrt{\varepsilon}(\rho^1-\varrho^1)\right)\partial_xu^1+\varepsilon(\rho^0-\overline{\rho^0})\partial_xu^2\right\}u_b^0\notag\\
&+\sqrt{\varepsilon}\left\{\left(\rho^0-\overline{\rho^0}+\sqrt{\varepsilon}(\rho^1-\varrho^1)\right)\partial_xu^0+\sqrt{\varepsilon}(\rho^0-\overline{\rho^0})\partial_xu^1\right\}u_b^1+\varepsilon(\rho^0-\overline{\rho^0})\partial_xu^0\widetilde{u_b^2}\notag\\
&+\left\{\left(u^0-\overline{u^0}+\sqrt{\varepsilon}(u^1-\mathcal{U}^1)+\varepsilon(u^2-\mathcal{U}^2)\right)\partial_xu^0\right.\notag\\
&\left.+\sqrt{\varepsilon}\left(u^0-\overline{u^0}+\sqrt{\varepsilon}(u^1-\mathcal{U}^1)\right)\partial_xu^1+\varepsilon(u^0-\overline{u^0})\partial_xu^2\right\}\rho_b^0\notag\\
&+\sqrt{\varepsilon}\left\{\left(u^0-\overline{u^0}+\sqrt{\varepsilon}(u^1-\mathcal{U}^1)\right)\partial_xu^0+\sqrt{\varepsilon}(u^0-\overline{u^0})\partial_xu^1\right\}\rho_b^1+\varepsilon(u^0-\overline{u^0})\partial_xu^0\rho_b^2\notag\\
&+\left\{\left(\rho^0-\overline{\rho^0}+\sqrt{\varepsilon}(\rho^1-\varrho^1)+\varepsilon(\rho^2-\varrho^2)\right)(u^0+u_b^0)\right.\notag\\
&\left.+\sqrt{\varepsilon}\left(\rho^0-\overline{\rho^0}+\sqrt{\varepsilon}(\rho^1-\varrho^1)\right)(u^1+u_b^1)+\varepsilon\left(\rho^0-\overline{\rho^0}\right)(u^2+\widetilde{u_b^2})\right.\notag\\
&\left.+(\overline{\rho^0}+\rho_b^0)\left(u^0-\overline{u^0}+\sqrt{\varepsilon}(u^1-\mathcal{U}^1)+\varepsilon(u^2-\mathcal{U}^2)\right)\right.\notag\\
&\left.+\sqrt{\varepsilon}(\varrho^1+\rho_b^1)\left(u^0-\overline{u^0}+\sqrt{\varepsilon}(u^1-\mathcal{U}^1)\right)+\varepsilon(\varrho^2+\rho_b^2)\left(u^0-\overline{u^0}\right)\right\}\partial_xu_b^0\notag\\
&+\sqrt{\varepsilon}\left\{\left(\rho^0-\overline{\rho^0}+\sqrt{\varepsilon}(\rho^1-\varrho^1)\right)(u^0+u_b^0)+\sqrt{\varepsilon}\left(\rho^0-\overline{\rho^0}\right)(u^1+u_b^1)\right.\notag\\
&\left.+(\overline{\rho^0}+\rho_b^0)\left(u^0-\overline{u^0}+\sqrt{\varepsilon}(u^1-\mathcal{U}^1)\right)+\sqrt{\varepsilon}(\varrho^1+\rho_b^1)\left(u^0-\overline{u^0}\right)\right\}\partial_xu_b^1\notag\\
&+\varepsilon\left\{(\overline{\rho^0}+\rho_b^0)(u^0-\overline{u^0})+(\rho^0-\overline{\rho^0})(\overline{u^0}+u_b^0)\right\}\partial_x\widetilde{u_b^2}\notag\\
&+\varepsilon(\overline{\rho^0}+\rho_b^0)(\overline{\partial_xu^0}+\partial_xu_b^0)\left(\widetilde{u_b^2}-u_b^2\right)
+\varepsilon\left[(\overline{\rho^0}+\rho_b^0)(\overline{u^0}+u_b^0)\right]\left(\partial_x\widetilde{u_b^2}-\partial_xu_b^2\right)\notag\\
&+\sqrt{\varepsilon}\left(\partial_yu^0-\overline{\partial_yu^0}+\sqrt{\varepsilon}(\partial_yu^1-\partial_y\mathcal{U}^1)\right)(\overline{\rho^0}v_b^0+\rho_b^0\mathcal{V}^1+\rho_b^0v_b^0)\notag\\
&+\varepsilon\left(\partial_yu^0-\overline{\partial_yu^0}\right)(\overline{\rho^0}v_b^1+\rho_b^0\mathcal{V}^2+\rho_b^0v_b^1+\varrho^1v_b^0+\rho_b^1\mathcal{V}^1+\rho_b^1v_b^0)\notag\\
&+\sqrt{\varepsilon}\left\{\left(\rho^0-\overline{\rho^0}+\sqrt{\varepsilon}(\rho^1-\varrho^1)\right)\partial_yu^0+\sqrt{\varepsilon}\left(\rho^0-\overline{\rho^0}\right)\partial_yu^1\right\}v_b^0\notag\\
&+\left\{\left(v^0+\sqrt{\varepsilon}(v^1-\mathcal{V}^1)+\varepsilon(v^2-\mathcal{V}^2)\right)\partial_yu^0+\sqrt{\varepsilon}\left(v^0+\sqrt{\varepsilon}(v^1-\mathcal{V}^1)\right)\partial_yu^1\right\}\rho_b^0\notag\\
&+\varepsilon\left(\rho^0-\overline{\rho^0}\right)\partial_yu^0v_b^1+\sqrt{\varepsilon}\left(v^0+\sqrt{\varepsilon}(v^1-\mathcal{V}^1)\right)\partial_yu^0\rho_b^1\notag\\
&+\left\{\left(\rho^0-\overline{\rho^0}+\sqrt{\varepsilon}(\rho^1-\varrho^1)+\varepsilon(\rho^2-\varrho^2)\right)(v^0+\sqrt{\varepsilon}v_b^0+\sqrt{\varepsilon}v^1)\right.\notag\\
&\left.+\varepsilon\left(\rho^0-\overline{\rho^0}+\sqrt{\varepsilon}(\rho^1-\varrho^1)\right)(v_b^1+v^2)+\varepsilon^{\frac32}\left(\rho^0-\overline{\rho^0}\right)\widetilde{v_b^2}\right.\notag\\
&\left.+(\overline{\rho^0}+\rho_b^0)\left(v^0+\sqrt{\varepsilon}(v^1-\mathcal{V}^1)+\varepsilon(v^2-\mathcal{V}^2)-\varepsilon^{\frac32}\mathcal{V}^3\right)\right.\notag\\
&\left.+\sqrt{\varepsilon}(\varrho^1+\rho_b^1)\left(v^0+\sqrt{\varepsilon}(v^1-\mathcal{V}^1)+\varepsilon(v^2-\mathcal{V}^2)\right)+\varepsilon(\varrho^2+\rho_b^2)\left(v^0+\sqrt{\varepsilon}(v^1-\mathcal{V}^1)\right)\right\}\partial_y u_b^0\notag\\
&+\sqrt{\varepsilon}\left\{\left(\rho^0-\overline{\rho^0}+\sqrt{\varepsilon}(\rho^1-\varrho^1)\right)(v^0+\sqrt{\varepsilon}v_b^0+\sqrt{\varepsilon}v^1)+\sqrt{\varepsilon}\left(\rho^0-\overline{\rho^0}\right)(v_b^1+v^2)\right.\notag\\
&\left.+(\overline{\rho^0}+\rho_b^0)\left(v^0+\sqrt{\varepsilon}(v^1-\mathcal{V}^1)+\sqrt{\varepsilon}(v^2-\mathcal{V}^2)\right)+\sqrt{\varepsilon}(\varrho^1+\rho_b^1)\left(v^0+\sqrt{\varepsilon}(v^1-\mathcal{V}^1)\right)\right\}\partial_y u_b^1\notag\\
&+\varepsilon\left\{(\overline{\rho^0}+\rho_b^0)\left(v^0+\sqrt{\varepsilon}(v^1-\mathcal{V}^1)\right)+\sqrt{\varepsilon}(\rho^0-\overline{\rho^0})(v^0+\sqrt{\varepsilon}v_b^0+\sqrt{\varepsilon}v^1)\right\}\partial_y\widetilde{u_b^2}\notag\\
&+\varepsilon(\overline{\rho^0}+\rho_b^0)\left(\widetilde{v_b^2}-v_b^2\right)\partial_\eta u_b^0+\varepsilon\left[(\overline{\rho^0}+\rho_b^0)(\mathcal{V}^1+v_b^0)\right]\left(\partial_\eta\widetilde{u_b^2}-\partial_\eta u_b^2\right)\notag\\
&-\left(\partial_xh^0-\overline{\partial_xh^0}+\sqrt\varepsilon(\partial_xh^1-\partial_x\mathcal{H}^1)+\varepsilon(\partial_xh^2-\partial_x\mathcal{H}^2)\right)h_b^0\notag\\
&-\sqrt{\varepsilon}\left(\partial_xh^0-\overline{\partial_xh^0}+\sqrt{\varepsilon}(\partial_xh^1-\partial_x\mathcal{H}^1)\right)h_b^1-\varepsilon\left(\partial_xh^0-\overline{\partial_xh^0}\right)\widetilde{h_b^2}\notag\\
&-\left(h^0-\overline{h^0}+\sqrt{\varepsilon}(h^1-\mathcal{H}^1)+\varepsilon(h^2-\mathcal{H}^2)\right)\partial_xh_b^0\notag\\
&-\sqrt{\varepsilon}\left(h^0-\overline{h^0}+\sqrt{\varepsilon}(h^1-\mathcal{H}^1)\right)\partial_xh_b^1-\varepsilon\left(h^0-\overline{h^0}\right)\partial_x\widetilde{h_b^2}\notag\\
&-\sqrt{\varepsilon}\left(\partial_yh^0-\overline{\partial_yh^0}+\sqrt{\varepsilon}(\partial_yh^1-\partial_y\mathcal{H}^1)\right)g_b^0-\varepsilon\left(\partial_yh^0-\overline{\partial_yh^0}\right)g_b^1\notag\\
&-\left(g^0+\sqrt{\varepsilon}(g^1-\mathcal{G}^1)+\varepsilon(g^2-\mathcal{G}^2)-\varepsilon^{\frac32}\mathcal{G}^3\right)\partial_yh_b^0\notag\\
&-\sqrt{\varepsilon}\left(g^0+\sqrt{\varepsilon}(g^1-\mathcal{G}^1)+\varepsilon(g^2-\mathcal{G}^2)\right)\partial_yh_b^1-\varepsilon\left(g^0+\sqrt{\varepsilon}(g^1-\mathcal{G}^1)\right)\partial_y\widetilde{h_b^2}\notag\\
&-\varepsilon\left(\overline{h^0}+h_b^0\right)\left(\partial_x\widetilde{h_b^2}-\partial_xh_b^2\right)-\varepsilon\left(\widetilde{h_b^2}-h_b^2\right)\left(\overline{\partial_xh^0}+\partial_xh_b^0\right)\notag\\
&-\varepsilon(\mathcal{G}^1+g_b^0)\left(\partial_\eta\widetilde{h_b^2}-\partial_\eta h_b^2\right)
-\varepsilon\left(\widetilde{g_b^2}-g_b^2\right)\partial_\eta h_b^0-\mu\varepsilon(\partial_{\eta\eta}\widetilde{u_b^2}-\partial_{\eta\eta}u_b^2)+\varepsilon^{\frac32}R_1^H,
\tag{B.2}
\end{align}
and
\begin{align}\label{B.3}
R_2=&\left(\partial_tv^0+\sqrt\varepsilon(\partial_tv^1-\partial_t\mathcal{V}^1)+\varepsilon(\partial_tv^2-\partial_t\mathcal{V}^2)\right)\rho_b^0+\sqrt{\varepsilon}\left(\partial_tv^0+\sqrt{\varepsilon}(\partial_tv^1-\partial_t\mathcal{V}^1)\right)\rho_b^1\notag\\
&+\sqrt{\varepsilon}\left(\rho^0-\overline{\rho^0}+\sqrt{\varepsilon}(\rho^1-\varrho^1)\right)\partial_tv_b^0+\varepsilon\left(\rho^0-\overline{\rho^0}\right)\partial_tv_b^1\notag\\
&+\left[\partial_xv^0+\sqrt{\varepsilon}(\partial_xv^1-\partial_x\mathcal{V}^1)+\varepsilon(\partial_xv^2-\partial_x\mathcal{V}^2)\right](\overline{\rho^0}u_b^0+\rho_b^0\overline{u^0}+\rho_b^0u_b^0)\notag\\
&+\sqrt{\varepsilon}\left[\partial_xv^0+\sqrt{\varepsilon}(\partial_xv^1-\partial_x\mathcal{V}^1)\right](\overline{\rho^0}u_b^1+\rho_b^0\mathcal{U}^1+\rho_b^0u_b^1+\rho_b^1\overline{u^0}+\rho_b^1u_b^0+\varrho^1u_b^0)\notag\\
&+\left\{\left[\rho^0-\overline{\rho^0}+\sqrt{\varepsilon}(\rho^1-\varrho^1)\right]\left(\partial_xv^0+\sqrt{\varepsilon}\partial_xv^1\right)+\varepsilon(\rho^0-\overline{\rho^0})\partial_xv^2\right\}u_b^0\notag\\
&+\left\{\left[u^0-\overline{u^0}+\sqrt{\varepsilon}(u^1-\mathcal{U}^1)\right]\left(\partial_xv^0+\sqrt{\varepsilon}\partial_xv^1\right)+\varepsilon(u^0-\overline{u^0})\partial_xv^2\right\}\rho_b^0\notag\\
&+\sqrt{\varepsilon}\left[\left(\rho^0-\overline{\rho^0}\right)\left(\partial_xv^0+\sqrt{\varepsilon}\partial_xv^1\right)\right]u_b^1+\sqrt{\varepsilon}\left[\left(u^0-\overline{u^0}\right)\left(\partial_xv^0+\sqrt{\varepsilon}\partial_xv^1\right)\right]\rho_b^1\notag\\
&+\sqrt{\varepsilon}\left\{\left[\rho^0-\overline{\rho^0}+\sqrt{\varepsilon}(\rho^1-\varrho^1)\right](u^0+u_b^0)+\sqrt{\varepsilon}\left(\rho^0-\overline{\rho^0}\right)(u^1+u_b^1)\right.\notag\\
&\left.+(\overline{\rho^0}+\rho_b^0)\left[u^0-\overline{u^0}+\sqrt{\varepsilon}(u^1-\mathcal{U}^1)\right]+\sqrt{\varepsilon}(\varrho^1+\rho_b^1)\left(u^0-\overline{u^0}\right)\right\}\partial_xv_b^0\notag\\
&+\varepsilon\left[(\overline{\rho^0}+\rho_b^0)\left(u^0-\overline{u^0}\right)+\left(\rho^0-\overline{\rho^0}\right)(\overline{u^0}+u_b^0)\right]\partial_xv_b^1\notag\\
&+\sqrt{\varepsilon}\left(\partial_yv^0-\overline{\partial_yv^0}+\sqrt{\varepsilon}(\partial_yv^1-\partial_y\mathcal{V}^1)\right)(\overline{\rho^0}v_b^0+\rho_b^0\mathcal{V}^1+\rho_b^0v_b^0)\notag\\
&+\varepsilon\left(\partial_yv^0-\overline{\partial_yv^0}\right)(\overline{\rho^0}v_b^1+\rho_b^0\mathcal{V}^2+\rho_b^0v_b^1+\varrho^1v_b^0+\rho_b^1\mathcal{V}^1+\rho_b^1v_b^0)\notag\\
&+\sqrt{\varepsilon}\left\{\left[\rho^0-\overline{\rho^0}+\sqrt{\varepsilon}(\rho^1-\varrho^1)\right]\partial_yv^0+\sqrt{\varepsilon}\left(\rho^0-\overline{\rho^0}\right)\partial_yv^1\right\}v_b^0\notag\\
&+\left\{\left[v^0+\sqrt{\varepsilon}(v^1-\mathcal{V}^1)+\varepsilon(v^2-\mathcal{V}^2)\right]\partial_yv^0+\sqrt{\varepsilon}\left[v^0+\sqrt{\varepsilon}(v^1-\mathcal{V}^1)\right]\partial_yv^1\right\}\rho_b^0\notag\\
&+\varepsilon\left(\rho^0-\overline{\rho^0}\right)\partial_yv^0v_b^1+\sqrt{\varepsilon}\left[v^0+\sqrt{\varepsilon}(v^1-\mathcal{V}^1)\right]\partial_yv^0\rho_b^1\notag\\
&+\left\{\sqrt{\varepsilon}\left[\rho^0-\overline{\rho^0}+\sqrt{\varepsilon}(\rho^1-\varrho^1)\right](v_b^0+v^1)+\varepsilon\left(\rho^0-\overline{\rho^0}\right)(v_b^1+v^2)\right.\notag\\
&\left.+(\overline{\rho^0}+\rho_b^0)\left[v^0+\sqrt{\varepsilon}(v^1-\mathcal{V}^1)+\varepsilon(v^2-\mathcal{V}^2)\right]+\sqrt{\varepsilon}(\varrho^1+\rho_b^1)\left(v^0+\sqrt{\varepsilon}(v^1-\mathcal{V}^1)\right)\right\}\partial_\eta v_b^0\notag\\
&+\sqrt{\varepsilon}\left\{\sqrt{\varepsilon}\left(\rho^0-\overline{\rho^0}\right)(v_b^0+v^1)+(\overline{\rho^0}+\rho_b^0)\left[v^0+\sqrt{\varepsilon}(v^1-\mathcal{V}^1)\right]\right\}\partial_\eta v_b^1\notag\\
&-\left(\partial_xg^0+\sqrt{\varepsilon}(\partial_xg^1-\partial_x\mathcal{G}^1)+\varepsilon(\partial_xg^2-\partial_x\mathcal{G}^2)\right)h_b^0-\sqrt{\varepsilon}\left(\partial_xg^0+\sqrt{\varepsilon}(\partial_xg^1-\partial_x\mathcal{G}^1)\right)h_b^1\notag\\
&-\sqrt{\varepsilon}\left(h^0-\overline{h^0}+\sqrt{\varepsilon}(h^1-\mathcal{H}^1)\right)\partial_xg_b^0-\varepsilon\left(h^0-\overline{h^0}\right)\partial_xg_b^1\notag\\
&-\sqrt{\varepsilon}\left(\partial_yg^0-\overline{\partial_yg^0}+\sqrt{\varepsilon}(\partial_yg^1-\partial_y\mathcal{G}^1)\right)g_b^0-\varepsilon\left(\partial_yg^0-\overline{\partial_yg^0}\right)g_b^1\notag\\
&-\left(g^0+\sqrt{\varepsilon}(g^1-\mathcal{G}^1)+\varepsilon(g^2-\mathcal{G}^2)\right)\partial_\eta g_b^0-\sqrt{\varepsilon}\left(g^0+\sqrt{\varepsilon}(g^1-\mathcal{G}^1)\right)\partial_\eta g_b^1+\varepsilon^{\frac32}R_2^H,
\tag{B.3}
\end{align}
and
\begin{align}\label{B.4}
R_3=&\left(\partial_xh^0-\overline{\partial_xh^0}+\sqrt\varepsilon(\partial_xh^1-\partial_x\mathcal{H}^1)+\varepsilon(\partial_xh^2-\partial_x\mathcal{H}^2)\right)u_b^0\notag\\
&+\sqrt{\varepsilon}\left(\partial_xh^0-\overline{\partial_xh^0}+\sqrt{\varepsilon}(\partial_xh^1-\partial_x\mathcal{H}^1)\right)u_b^1+\varepsilon\left(\partial_xh^0-\overline{\partial_xh^0}\right)\widetilde{u_b^2}\notag\\
&+\left(u^0-\overline{u^0}+\sqrt{\varepsilon}(u^1-\mathcal{U}^1)+\varepsilon(u^2-\mathcal{U}^2)\right)\partial_xh_b^0\notag\\
&+\sqrt{\varepsilon}\left(u^0-\overline{u^0}+\sqrt{\varepsilon}(u^1-\mathcal{U}^1)\right)\partial_xh_b^1+\varepsilon\left(u^0-\overline{u^0}\right)\partial_x\widetilde{h_b^2}\notag\\
&+\sqrt{\varepsilon}\left(\partial_yh^0-\overline{\partial_yh^0}+\sqrt{\varepsilon}(\partial_yh^1-\partial_y\mathcal{H}^1)\right)v_b^0+\varepsilon\left(\partial_yh^0-\overline{\partial_yh^0}\right)v_b^1\notag\\
&+\left(v^0+\sqrt{\varepsilon}(v^1-\mathcal{V}^1)+\varepsilon(v^2-\mathcal{V}^2)-\varepsilon^{\frac32}\mathcal{V}^3\right)\partial_yh_b^0\notag\\
&+\sqrt{\varepsilon}\left(v^0+\sqrt{\varepsilon}(v^1-\mathcal{V}^1)+\varepsilon(v^2-\mathcal{V}^2)\right)\partial_yh_b^1+\varepsilon\left(v^0+\sqrt{\varepsilon}(v^1-\mathcal{V}^1)\right)\partial_y\widetilde{h_b^2}\notag\\
&+\varepsilon\left(\partial_t\widetilde{h_b^2}-\partial_th_b^2\right)+\varepsilon\left(\overline{u^0}+u_b^0\right)\left(\partial_x\widetilde{h_b^2}-\partial_xh_b^2\right)+\varepsilon\left(\widetilde{u_b^2}-u_b^2\right)\left(\overline{\partial_xh^0}+\partial_xh_b^0\right)\notag\\
&+\sqrt{\varepsilon}\left(v^0+\sqrt{\varepsilon}(\mathcal{V}^1+v_b^0)\right)\left(\partial_\eta\widetilde{h_b^2}-\partial_\eta h_b^2\right)+\varepsilon\left(\widetilde{v_b^2}-v_b^2\right)\partial_\eta h_b^0\notag\\
&-\left(\partial_xu^0-\overline{\partial_xu^0}+\sqrt\varepsilon(\partial_xu^1-\partial_x\mathcal{U}^1)+\varepsilon(\partial_xu^2-\partial_x\mathcal{U}^2)\right)h_b^0\notag\\
&-\sqrt{\varepsilon}\left(\partial_xu^0-\overline{\partial_xu^0}+\sqrt{\varepsilon}(\partial_xu^1-\partial_x\mathcal{U}^1)\right)h_b^1-\varepsilon\left(\partial_xu^0-\overline{\partial_xu^0}\right)\widetilde{h_b^2}\notag\\
&-\left(h^0-\overline{h^0}+\sqrt{\varepsilon}(h^1-\mathcal{H}^1)+\varepsilon(h^2-\mathcal{H}^2)\right)\partial_xu_b^0\notag\\
&-\sqrt{\varepsilon}\left(h^0-\overline{h^0}+\sqrt{\varepsilon}(h^1-\mathcal{H}^1))\right)\partial_xu_b^1-\varepsilon\left(h^0-\overline{h^0}\right)\partial_x\widetilde{u_b^2}\notag\\
&-\sqrt{\varepsilon}\left(\partial_yu^0-\overline{\partial_yu^0}+\sqrt{\varepsilon}(\partial_yu^1-\partial_y\mathcal{U}^1)\right)g_b^0-\varepsilon\left(\partial_yu^0-\overline{\partial_yu^0}\right)g_b^1\notag\\
&-\left(g^0+\sqrt{\varepsilon}(g^1-\mathcal{G}^1)+\varepsilon(g^2-\mathcal{G}^2)-\varepsilon^{\frac32}\mathcal{G}^3\right)\partial_yu_b^0\notag\\
&-\sqrt{\varepsilon}\left(g^0+\sqrt{\varepsilon}(g^1-\mathcal{G}^1)+\varepsilon(g^2-\mathcal{G}^2)\right)\partial_yu_b^1-\varepsilon\left(g^0+\sqrt{\varepsilon}(g^1-\mathcal{G}^1)\right)\partial_y\widetilde{u_b^2}\notag\\
&-\varepsilon\left(\overline{h^0}+h_b^0\right)\left(\partial_x\widetilde{u_b^2}-\partial_xu_b^2\right)-\varepsilon\left(\widetilde{h_b^2}-h_b^2\right)\left(\overline{\partial_xu^0}+\partial_xu_b^0\right)\notag\\
&-\sqrt{\varepsilon}\left(g^0+\sqrt{\varepsilon}(\mathcal{G}^1+g_b^0)\right)\left(\partial_\eta\widetilde{u_b^2}-\partial_\eta u_b^2\right)
-\varepsilon\left(\widetilde{g_b^2}-g_b^2\right)\partial_\eta u_b^0\notag\\
&-\kappa\varepsilon(\partial_{\eta\eta}\widetilde{h_b^2}-\partial_{\eta\eta}h_b^2)+R^H_3,
\tag{B.4}
\end{align}
and
\begin{align}\label{B.5}
R_4=&\left(\partial_xg^0+\sqrt\varepsilon(\partial_xg^1-\partial_x\mathcal{G}^1)+\varepsilon(\partial_xg^2-\partial_x\mathcal{G}^2)\right)u_b^0+\sqrt{\varepsilon}\left(\partial_xg^0+\sqrt{\varepsilon}(\partial_xg^1-\partial_x\mathcal{G}^1)\right)u_b^1\notag\\
&+\sqrt{\varepsilon}\left(u^0-\overline{u^0}+\sqrt{\varepsilon}(u^1-\mathcal{U}^1)\right)\partial_xg_b^0+\varepsilon\left(u^0-\overline{u^0}\right)\partial_xg_b^1\notag\\
&+\sqrt{\varepsilon}\left(\partial_yg^0-\overline{\partial_yg^0}+\sqrt{\varepsilon}(\partial_yg^1-\partial_y\mathcal{G}^1)\right)v_b^0+\varepsilon\left(\partial_yg^0-\overline{\partial_yg^0}\right)v_b^1\notag\\
&+\left(v^0+\sqrt{\varepsilon}(v^1-\mathcal{V}^1)+\varepsilon(v^2-\mathcal{V}^2)\right)\partial_\eta g_b^0+\sqrt{\varepsilon}\left(v^0+\sqrt{\varepsilon}(v^1-\mathcal{V}^1)\right)\partial_\eta g_b^1\notag\\
&-\left(\partial_xv^0+\sqrt\varepsilon(\partial_xv^1-\partial_x\mathcal{V}^1)+\varepsilon(\partial_xv^2-\partial_x\mathcal{V}^2)\right)h_b^0-\sqrt{\varepsilon}\left(\partial_xv^0+\sqrt{\varepsilon}(\partial_xv^1-\partial_x\mathcal{V}^1)\right)h_b^1\notag\\
&-\sqrt{\varepsilon}\left(h^0-\overline{h^0}+\sqrt{\varepsilon}(h^1-\mathcal{H}^1)\right)\partial_xv_b^0-\varepsilon\left(h^0-\overline{h^0}\right)\partial_xv_b^1\notag\\
&-\sqrt{\varepsilon}\left(\partial_yv^0-\overline{\partial_yv^0}+\sqrt{\varepsilon}(\partial_yv^1-\partial_y\mathcal{V}^1)\right)g_b^0-\varepsilon\left(\partial_yg^0-\overline{\partial_yg^0}\right)g_b^1\notag\\
&-\left(g^0+\sqrt{\varepsilon}(g^1-\mathcal{G}^1)+\varepsilon(g^2-\mathcal{G}^2)\right)\partial_\eta v_b^0-\sqrt{\varepsilon}\left(g^0+\sqrt{\varepsilon}(g^1-\mathcal{G}^1)\right)\partial_\eta v_b^1+\varepsilon^{\frac32}R_4^H.
\tag{B.5}
\end{align}
Here $\varepsilon^{\frac32}R_i^H\ (i=0,1,2,3,4)$ are the terms of order $\varepsilon^{\frac32}$ and higher order.
\begin{align}\label{B.6}
R^H_0=&(u^2+\widetilde{u_b^2})\partial_x(\rho^1+\rho_b^1)+\left[(u^1+u_b^1)+\sqrt{\varepsilon}(u^2+\widetilde{u_b^2})\right]\partial_x(\rho^2+\rho_b^2)\notag\\
&+\widetilde{v_b^2}(\partial_y\rho^0+\partial_\eta\rho_b^1)+\left[(v_b^1+v^2)+\sqrt{\varepsilon}\widetilde{v_b^2}\right](\partial_y\rho^1+\partial_\eta\rho_b^2)\notag\\
&+\left[(v_b^0+v^1)+\sqrt{\varepsilon}(v_b^1+v^2)+\varepsilon\widetilde{v_b^2}\right]\partial_y\rho^2,
\tag{B.6}
\end{align}
and
\begin{align}\label{B.7}
R_1^H=&(\rho^2+\rho_b^2)\partial_t(u^1+u_b^1)+\left[(\rho^1+\rho_b^1)+\sqrt{\varepsilon}(\rho^2+\rho_b^2)\right]\partial_t(u^2+\widetilde{u_b^2})\notag\\
&\left[(\rho^1+\rho_b^1)(u^2+\widetilde{u_b^2})+(\rho^2+\rho_b^2)(u_1+u_b^1)+\sqrt{\varepsilon}(\rho^2+\rho_b^2)(u^2+\widetilde{u_b^2})\right](\partial_xu^0+\partial_xu_b^0)\notag\\
&+\left\{(\rho^0+\rho_b^0)(u^2+\widetilde{u_b^2})+(\rho^1+\rho_b^1)(u^1+u_b^1)+(\rho^2+\rho_b^2)(u^0+u_b^0)\right.\notag\\
&\left.+\sqrt{\varepsilon}\left[(\rho^1+\rho_b^1)(u^2+\widetilde{u_b^2})+(\rho^2+\rho_b^2)(u^1+u_b^1)\right]+\varepsilon(\rho^2+\rho_b^2)(u^2+\widetilde{u_b^2})\right\}(\partial_xu^1+\partial_xu_b^1)\notag\\
&+\left\{(\rho^0+\rho_b^0)(u^1+u_b^1)+(\rho^1+\rho_b^1)(u^0+u_b^0)\right.\notag\\
&\left.+\sqrt{\varepsilon}\left[(\rho^0+\rho_b^0)(u^2+\widetilde{u_b^2})+(\rho^1+\rho_b^1)(u^1+u_b^1)+(\rho^2+\rho_b^2)(u^0+u_b^0)\right]\right.\notag\\
&\left.+\varepsilon\left[(\rho^1+\rho_b^1)(u^2+\widetilde{u_b^2})+(\rho^2+\rho_b^2)(u^1+u_b^1)\right]+\varepsilon^{\frac32}(\rho^2+\rho_b^2)(u^2+\widetilde{u_b^2})\right\}(\partial_xu^2+\partial_x\widetilde{u_b^2})\notag\\
&+\left\{(\rho^1+\rho_b^1)\widetilde{v_b^2}+(\rho^2+\rho_b^2)\left((v_b^1+v^2)+\sqrt{\varepsilon}\widetilde{v_b^2}\right)\right\}\partial_\eta u_b^0\notag\\
&+\left\{(\rho^0+\rho_b^0)\widetilde{v_b^2}+(\rho^1+\rho_b^1)(v_b^1+v^2)+(\rho^2+\rho_b^2)(v_b^0+v^1)+\rho_b^2v^0\right.\notag\\
&\left.+\sqrt{\varepsilon}\left[(\rho^1+\rho_b^1)\widetilde{v_b^2}+(\rho^2+\rho_b^2)(v_b^1+v^2)\right]+\varepsilon(\rho^2+\rho_b^2)\widetilde{v_b^2}\right\}(\partial_yu^0+\partial_\eta u_b^1)\notag\\
&+\rho^2v^0\partial_\eta u_b^1+\left\{(\rho^0+\rho_b^0)(v_b^1+v^2)+(\rho^1+\rho_b^1)(v_b^0+v^1)+\rho_b^1v^0\right.\notag\\
&\left.+\sqrt{\varepsilon}\left[(\rho^0+\rho_b^0)\widetilde{v_b^2}+(\rho^1+\rho_b^1)(v_b^1+v^2)+(\rho^2+\rho_b^2)(v_b^0+v^1)+\rho_b^2v^0\right]\right.\notag\\
&\left.+\varepsilon\left[(\rho^1+\rho_b^1)\widetilde{v_b^2}+(\rho^2+\rho_b^2)(v_b^1+v^2)\right]+\varepsilon^{\frac32}(\rho^2+\rho_b^2)\widetilde{v_b^2}\right\}(\partial_yu^1+\partial_\eta\widetilde{u_b^2})\notag\\
&+\rho^2v^0(\partial_yu^1+\partial_\eta u_b^1)+\sqrt{\varepsilon}\rho^2v^0\partial_\eta\widetilde{u_b^2}+\varepsilon^{-\frac12}(\rho^av^a-\rho^0v^0)\partial_yu^2\notag\\
&-(h^2+\widetilde{h_b^2})\partial_x(h^1+h_b^1)-\left[(h^1+h_b^1)+\sqrt{\varepsilon}(h^2+\widetilde{h_b^2})\right]\partial_x(h^2+\widetilde{h_b^2})\notag\\
&-\widetilde{g_b^2}(\partial_yh^0+\partial_\eta h_b^1)-\left[(g_b^1+g^2)+\sqrt{\varepsilon}\widetilde{g_b^2}\right](\partial_yh^1+\partial_\eta\widetilde{h_b^2})\notag\\
&+\left[(g_b^0+g^1)+\sqrt{\varepsilon}(g_b^1+g^2)+\varepsilon\widetilde{g_b^2}\right]\partial_yh^2-\mu(\partial_{xx}u_b^1+\sqrt{\varepsilon}\partial_{xx}\widetilde{u_b^2})-\mu(\Delta u^1+\sqrt{\varepsilon}\Delta u^2),
\tag{B.7}
\end{align}
and
\begin{align}\label{B.8}
R^H_2=&\rho^a\partial_t\widetilde{v_b^2}+\rho^au^a\partial_x\widetilde{v_b^2}+\rho^av^a\partial_y\widetilde{v_b^2}-h^a\partial_x\widetilde{g_b^2}-g^a\partial_y\widetilde{g_b^2}\notag\\
&+\varepsilon^{-\frac12}\rho_b^2\partial_tv^0+(\rho^2+\rho_b^2)\partial_t(v_b^0+v^1)+\left(\rho^1+\rho_b^1+\sqrt{\varepsilon}(\rho^2+\rho_b^2)\right)\partial_t(v_b^1+v^2)\notag\\
&+\varepsilon^{-\frac12}\left[(\rho_b^2u^a+\rho^2(u^a-u^0)+\rho_b^1(u^1+u_b^1)+\rho^1u_b^1+\rho_b^0(u^2+\widetilde{u_b^2})+\rho^0\widetilde{u_b^2}\right]\partial_xv^0\notag\\
&+\left\{(\rho^1+\rho_b^1)(u^2+\widetilde{u_b^2})+(\rho^2+\rho_b^2)\left[(u^1+u_b^1)+\sqrt{\varepsilon}(u^2+\widetilde{u_b^2})\right]\right\}\partial_xv^0\notag\\
&+\left\{(\rho^0+\rho_b^0)(u^2+\widetilde{u_b^2})+(\rho^1+\rho_b^1)\left[(u^1+u_b^1)+\sqrt{\varepsilon}(u^2+\widetilde{u_b^2})\right]\right.\notag\\
&\left.+(\rho^2+\rho_b^2)u^a\right\}(\partial_xv_b^0+\partial_xv^1)+\varepsilon^{-\frac12}\left(\rho^au^a-(\rho^0+\rho_b^0)(u^0+u_b^0)\right)(\partial_xv_b^1+\partial_xv^2)\notag\\
&+\left\{(\rho^0+\rho_b^0)\widetilde{v_b^2}+(\rho^1+\rho_b^1)(v_b^1+v^2+\sqrt{\varepsilon}\widetilde{v_b^2})\right\}(\partial_yv^0+\partial_\eta v_b^0)\notag\\
&+\varepsilon^{-\frac12}\left[\rho^2(v^a-v^0)\partial_yv^0+\rho^2v^a\partial_\eta v_b^0+\rho_b^2v^a(\partial_yv^0+\partial_\eta v_b^0)\right]\notag\\
&+\left\{(\rho^0+\rho_b^0)\left(v_b^1+v^2+\sqrt{\varepsilon}\widetilde{v_b^2}\right)+(\rho^2+\rho_b^2)v^a\right\}(\partial_yv^1+\partial_\eta v_b^1)\notag\\
&+\varepsilon^{-\frac12}\left[\rho^1(v^a-v^0)\partial_yu^1+\rho^1v^a\partial_\eta v_b^1+\rho_b^1v^a(\partial_yv^1+\partial_\eta v_b^1\right]+\varepsilon^{-\frac12}(\rho^av^a-\rho^0v^0)\partial_yv^2\notag\\
&-\varepsilon^{-\frac12}\widetilde{h_b^2}\partial_xg^0-(h^2+\widetilde{h_b^2})\partial_x(g_b^0+g^1)-\left[(h^1+h_b^1)+\sqrt{\varepsilon}(h^2+\widetilde{h_b^2})\right]\partial_x(g_b^1+g^2)\notag\\
&-\widetilde{g_b^2}(\partial_yg^0+\partial_\eta g_b^0)-\left[(g_b^1+g^2)+\sqrt{\varepsilon}\widetilde{g_b^2}\right](\partial_yg^1+\partial_\eta g_b^1)-\varepsilon^{-\frac12}(g^a-g^0)\partial_yg^2\notag\\
&-\mu(\partial_{xx}v_b^0+\sqrt{\varepsilon}\partial_{xx}\widetilde{v_b^1})-\mu(\Delta v^1+\sqrt{\varepsilon}\Delta v^2+\varepsilon\Delta\widetilde{v_b^2}),
\tag{B.8}
\end{align}
and
\begin{align}\label{B.9}
R^H_3=&(u^2+\widetilde{u_b^2})\partial_x(h^1+h_b^1)+\left[(u^1+u_b^1)+\sqrt{\varepsilon}(u^2+\widetilde{u_b^2})\right]\partial_x(h^2+\widetilde{h_b^2})\notag\\
&+\widetilde{v_b^2}(\partial_yh^0+\partial_\eta h_b^1)+\left[(v_b^1+v^2)+\sqrt{\varepsilon}\widetilde{v_b^2}\right](\partial_yh^1+\partial_\eta\widetilde{h_b^2})+\varepsilon^{-\frac12}(v^a-v^0)\partial_yh^2\notag\\
&-(h^2+\widetilde{h_b^2})\partial_x(u^1+u_b^1)-\left[(h^1+h_b^1)+\sqrt{\varepsilon}(h^2+\widetilde{h_b^2})\right]\partial_x(u^2+\widetilde{u_b^2})\notag\\
&-\widetilde{g_b^2}(\partial_yu^0+\partial_\eta u_b^1)-\left[(g_b^1+g^2)+\sqrt{\varepsilon}\widetilde{g_b^2}\right](\partial_yu^1+\partial_\eta\widetilde{u_b^2})-\varepsilon^{-\frac12}(g^a-g^0)\partial_yu^2\notag\\
&-\kappa(\partial_{xx}h_b^1+\sqrt{\varepsilon}\partial_{xx}\widetilde{h_b^2})-\kappa(\Delta h^1+\sqrt{\varepsilon}\Delta h^2),
\tag{B.9}
\end{align}
and
\begin{align}\label{B.10}
R^H_4=&\partial_t\widetilde{g_b^2}+u^a\partial_x\widetilde{g_b^2}+v^a\partial_y\widetilde{g_b^2}-h^a\partial_x\widetilde{v_b^2}-g^a\partial_y\widetilde{v_b^2}\notag\\
&+\varepsilon^{-\frac12}\widetilde{u_b^2}\partial_xg^0+(u^2+\widetilde{u_b^2})\partial_x(g_b^0+g^1)+\left[(u^1+u_b^1)+\sqrt{\varepsilon}(u^2+\widetilde{u_b^2})\right]\partial_x(g_b^1+g^2)\notag\\
&+\widetilde{v_b^2}(\partial_yg^0+\partial_\eta g_b^0)+\left[(v_b^1+v^2)+\sqrt{\varepsilon}\widetilde{v_b^2}\right](\partial_yg^1+\partial_\eta g_b^1)+\varepsilon^{-\frac12}(v^a-v^0)\partial_yg^2\notag\\
&-\varepsilon^{-\frac12}\widetilde{h_b^2}\partial_xv^0-(h^2+\widetilde{h_b^2})\partial_x(v_b^0+v^1)-\left[(h^1+h_b^1)+\sqrt{\varepsilon}(h^2+\widetilde{h_b^2})\right]\partial_x(v_b^1+v^2)\notag\\
&-\widetilde{h_b^2}(\partial_yv^0+\partial_\eta v_b^0)-\left[(g_b^1+g^2)+\sqrt{\varepsilon}\widetilde{g_b^2}\right](\partial_yv^1+\partial_\eta v_b^1)-\varepsilon^{-\frac12}(g^a-g^0)\partial_yv^2\notag\\
&-\kappa(\partial_{xx}g_b^0+\sqrt{\varepsilon}\partial_{xx}\widetilde{g_b^1})-\kappa(\Delta g^1+\sqrt{\varepsilon}\Delta g^2+\varepsilon\Delta\widetilde{g_b^2}).
\tag{B.10}
\end{align}
With above expressions of the remainders, we are ready to prove Proposition \ref{remainder}.
\begin{proof}[Proof of Proposition \ref{remainder}]
It is sufficient to prove the case that $|\alpha|=0$ and $k=0$. For simplicity of presentation, we only consider the estimate of $R_1$, and the other terms can be estimated in a same way. We learn from Taylor's expansion that
\begin{align*}
u^0=\overline{u^0}+\sqrt{\varepsilon}\eta\overline{\partial_yu^0}+\frac12\varepsilon^2\eta^2\overline{\partial_y^2u^0}+O(\varepsilon^{\frac32}),\quad u^1=\overline{u^1}+\sqrt{\varepsilon}\eta\overline{\partial_yu^1}+O(\varepsilon).
\end{align*}
Recall the definition of $\mathcal{U}^1$ and $\mathcal{U}^2$, we find that
\begin{align*}
u^0-\overline{u^0}+\sqrt{\varepsilon}(u^1-\mathcal{U}^1)+\varepsilon(u^2-\mathcal{U}^2)=O(\varepsilon^{\frac32}),
\end{align*}
which implies
\begin{align}\label{B.11}
\|(u^0-\overline{u^0}+\sqrt{\varepsilon}(u^1-\mathcal{U}^1)+\varepsilon(u^2-\mathcal{U}^2))\partial_x\rho_b^0\|_{L^2}\le C\varepsilon^{\frac32}.\tag{B.11}
\end{align}
Similarly, one also deduce that
\begin{align}\label{B.12}
u^0-\overline{u^0}+\sqrt{\varepsilon}(u^1-\mathcal{U}^1)=O(\varepsilon),\quad u^0-\overline{u^0}=O(\sqrt{\varepsilon}).\tag{B.12}
\end{align}
Moreover, since $\overline{v^0}=0$, we obtain that
\begin{align*}
v^0+\sqrt{\varepsilon}(v^1-\mathcal{V}^1)+\varepsilon(v^2-\mathcal{V}^2)-\varepsilon^{\frac32}\mathcal{V}^3=O(\varepsilon^{2}).
\end{align*}
It remains to estimate the last line of $R_1$ and we only show the estimate of $\varepsilon(\widetilde{u_b^2}-u_b^2)$ here. Recall the definition of $\widetilde{u_b^2}$, one has
\begin{align*}
\varepsilon(\widetilde{u_b^2}-u_b^2)=\varepsilon(\chi (y)-1)u_b^2+\varepsilon^{\frac32}\tau_u.
\end{align*}
For the first term of the above equality, we have for any $l>0$
\begin{align*}
\|\varepsilon(\chi(y)-1)u_b^2\|_{L^2}^2&=\varepsilon^2\int_{\mathbb{T}}\int_{1/\sqrt{\varepsilon}}^\infty \left[\left(1-\chi(\sqrt{\varepsilon}\eta)\right)u_b^2(t, x, \eta)\right]^2\;d\eta dx\\
&\le 2\varepsilon^2\int_{\mathbb{T}}\int_{1/\sqrt{\varepsilon}}^\infty (\sqrt{\varepsilon}\eta)^{2l}\cdot(u_b^2(t, x, \eta))^2\;d\eta dx\\
&\le 4\varepsilon^{2+l}\|\eta^{l}u_b^2\|_{L^2}^2\\
&\le C\varepsilon^{2+l}.
\end{align*}
Thus, for $l\ge1$, we obtain that
\begin{align*}
\|\varepsilon(\widetilde{u_b^2}-u_b^2)\|_{L^2}\le C\varepsilon^{\frac32}.
\end{align*}
Then we achieve immediately that
\begin{align}\label{B.13}
\|\varepsilon(\widetilde{u_b^2}-u_b^2)\left(\overline{\partial_x\rho^0}+\partial_x\rho_b^0\right)\|_{L^2}\le C\varepsilon^{\frac32}.\tag{B.13}
\end{align}
This ends the proof of Proposition \ref{remainder}.
\end{proof}

\section*{Appendix C}
In this section, we will present the expression of the vector ${\bf{C}}^0$, ${\bf{C}}^\varepsilon$ and show the proof of Proposition \ref{vector}. First, the vector ${\bf{C}}^0$ is given by
\begin{align*}
{\bf{C}}^0=(\partial_x\rho^a+c^pg^a)u+(\partial_y\rho^a-c^ph^a)v+\kappa\varepsilon(c^p)^2h+(\partial_t+{\bf{u}}^\varepsilon\cdot\nabla+\kappa\varepsilon c^p\partial_y)c^p\cdot\psi,
\end{align*}
and ${\bf{C}}^\varepsilon$ has the following form
\begin{align}\label{C.1}
{\bf{C}}^\varepsilon={\bf{M}}\cdot(u, v, h, g)^T+\psi{\bf{V}},
\tag{C.1}
\end{align}
where ${\bf{M}}$ is a matrix and ${\bf{V}}$ is a vector. Moreover, the matrix ${\bf{M}}$ is stated as follows.
\begin{align*}
{\bf{M}}=\left(
\begin{array}{cccc}
-\rho^\varepsilon\partial_y(v^a-a^pg^a) & \rho^\varepsilon\partial_y(u^a-a^ph^a) & {\bf{M}}_{13} & {\bf{M}}_{14} \\
\rho^\varepsilon\partial_x(v^a-a^pg^a) & -\rho^\varepsilon\partial_x(u^a-a^ph^a) & {\bf{M}}_{23} & {\bf{M}}_{24} \\
\partial_xh^a+b^pg^a & \partial_yh^a-b^ph^a & {\bf{M}}_{33} & {\bf{M}}_{34} \\
\partial_xg^a & \partial_yg^a & {\bf{M}}_{43} & {\bf{M}}_{44} \\
\end{array}
\right),
\end{align*}
with
\begin{align*}
{\bf{M}}_{13}=&-\partial_xh^a+\rho^\varepsilon a^p\partial_xu^a+b^p\left\{ [\rho^\varepsilon(a^p)^2-1]g^\varepsilon+(\rho^\varepsilon \kappa-3\mu)\varepsilon\partial_ya^p+(\rho^\varepsilon\kappa-\mu)\varepsilon a^pb^p\right\}\\
&+\left\{\rho^\varepsilon[\partial_t+(u^\varepsilon+a^ph^\varepsilon)\partial_x+(v^\varepsilon+2a^pg^\varepsilon)\partial_y]
-\mu\varepsilon\partial_x^2-3\mu\varepsilon\partial_y^2\right\}a^p-2(\mu-\rho^\varepsilon\kappa)\varepsilon a^p\partial_yb^p,\\
{\bf{M}}_{14}=&-\partial_yh^a+\rho^\varepsilon a^p\partial_yu^a-\rho^\varepsilon a^p\partial_ya^ph^\varepsilon-b^p\left\{[\rho^\varepsilon(a^p)^2-1]h^\varepsilon-2\mu\varepsilon\partial_xa^p\right\}\\
&+2\mu\varepsilon\partial_{xy}a^p+2(\mu-\rho^\varepsilon\kappa)\varepsilon a^p\partial_xb^p,\\
{\bf{M}}_{23}=&-\partial_xg^a+\rho^\varepsilon a^p\partial_xv^a-\rho^\varepsilon a^p\partial_xa^pg^\varepsilon+2
\mu\varepsilon\partial_{xy}a^p+(\mu-\rho^\varepsilon\kappa)\varepsilon b^p\partial_xa^p,\\
{\bf{M}}_{24}=&-\partial_yg^a+\rho^\varepsilon a^p\partial_yv^a+\left\{\rho^\varepsilon[\partial_t+(u^\varepsilon+2a^ph^\varepsilon)\partial_x+(v^\varepsilon+a^pg^\varepsilon)\partial_y]-3\mu\varepsilon\partial_x^2-\mu\varepsilon\partial_y^2\right\}a^p,\\
{\bf{M}}_{33}=&-\partial_xu^a-(h^\varepsilon\partial_x+2g^\varepsilon\partial_y)a^p-a^pb^pg^a-2\kappa\varepsilon\partial_yb^p,\\
{\bf{M}}_{34}=&-\partial_yu^a+(\partial_ya^p+a^pb^p)h^\varepsilon+2\kappa\varepsilon\partial_xb^p,\\
{\bf{M}}_{43}=&-\partial_xv^a+\partial_xa^pg^\varepsilon,\\
{\bf{M}}_{44}=&-\partial_yv^a-(2h^\varepsilon\partial_x+g^\varepsilon\partial_y)a^p.
\end{align*}
The vector ${\bf{V}}=(V_i)\ (i=1, 2, 3, 4)$ is given by
\begin{align*}
V_1=&\partial_xb^p\left\{[\rho^\varepsilon(a^p)^2-1]h^\varepsilon-2\mu\varepsilon\partial_xa^p\right\}\\
&+\partial_yb^p\left\{[\rho^\varepsilon(a^p)^2-1]g^\varepsilon+(3\rho^\varepsilon\kappa-\mu)\varepsilon\partial_ya^p+(\rho^\varepsilon\kappa-\mu)\varepsilon a^pb^p\right\}\\
&+\left\{\rho^\varepsilon[\partial_t+(u^\varepsilon+a^ph^\varepsilon)\partial_x+(v^\varepsilon+a^pg^\varepsilon)\partial_y]
-\mu\varepsilon\Delta\right\}\partial_ya^p-(\mu-\rho^\varepsilon\kappa)\varepsilon a^p\Delta b^p,\\
V_2=&-\left\{\rho^\varepsilon[\partial_t+(u^\varepsilon+a^ph^\varepsilon)\partial_x+(v^\varepsilon+a^pg^\varepsilon)\partial_y]
-\mu\varepsilon\Delta\right\}\partial_xa^p+(\mu-\rho^\varepsilon\kappa)\varepsilon\partial_xa^p\partial_y b^p,\\
V_3=&-(h^\varepsilon\partial_x+g^\varepsilon\partial_y)\partial_ya^p+\left\{\partial_t+(u^\varepsilon+a^ph^\varepsilon)\partial_x+(v^\varepsilon+a^pg^\varepsilon)\partial_y\right\}g^p,\\
V_4=&(h^\varepsilon\partial_x+g^\varepsilon\partial_y)\partial_xa^p.
\end{align*}
Now it is position to show the proof of Proposition \ref{vector}.
\begin{proof}[Proof of Proposition \ref{vector}]
We only show the estimates of the first line of ${\bf{M}}$. First, from the boundary condition $g^a|_{y=0}=0$ and the divergence free condition, we get by H\"ardy's trick that
\begin{align*}
\|g^a\partial_ya^p\|_{L^\infty}\le \|y\partial_ya^p\|_{L^\infty}\left\|\frac{v^a}y\right\|_{L^\infty}\lesssim\|y\partial_ya^p\|_{L^\infty}\|\partial_yv^a\|_{L^\infty}=O(1),
\end{align*}
which implies
\begin{align}\label{C.2}
\|\rho^\varepsilon\partial_y(v^a-a^pg^a)\|_{L^\infty}=O(1).
\tag{C.2}
\end{align}
Recall \eqref{2.14} and the definitions of $u^p$ and $h^p$, we have
\begin{align*}
u^a-a^ph^a=u^0-\overline{u^0}+u^p-a^p(h^0-\overline{h^0}+h^p)+O(\sqrt{\varepsilon})=u^0-\overline{u^0}+a^p(h^0-\overline{h^0})+O(\sqrt{\varepsilon}).
\end{align*}
From \eqref{B.12}, we find that $u^a-a^ph^a$ is of order $\sqrt{\varepsilon}$ and it is direct to verify
\begin{align}\label{C.3}
\|\rho^\varepsilon\partial_y(u^a-a^ph^a)\|_{L^\infty}=O(1).
\tag{C.3}
\end{align}
Similarly, one also has
\begin{align}\label{C.4}
\partial_yh^a-b^ph^a=O(1).\tag{C.4}
\end{align}
For the third term $b^p[\rho^\varepsilon(a^p)^2-1]g^\varepsilon$, we separate it into two parts by \eqref{3.1}.
\begin{align*}
b^p[\rho^\varepsilon(a^p)^2-1]g^\varepsilon=b^p[\rho^\varepsilon(a^p)^2-1]g^a+\varepsilon^{\frac32}b^p[\rho^\varepsilon(a^p)^2-1]g.
\end{align*}
The first one is of order $O(1)$ by \eqref{C.2}, and the second one is also of order $O(1)$ by the assumption \eqref{3.24}. Thus we obtain
\begin{align}\label{C.5}
b^p[\rho^\varepsilon(a^p)^2-1]g^\varepsilon=O(1).
\tag{C.5}
\end{align}
Similar estimates hold  for $\rho^\varepsilon(v^\varepsilon+2a^pg^\varepsilon)\partial_ya^p$, and the remaining terms of ${\bf{M}}_{13}$ are of order $O(1)$. Therefore, we conclude
\begin{align}\label{C.6}
{\bf{M}}_{13}=O(1).
\tag{C.6}
\end{align}
Finally, for ${\bf{M}}_{14}$, there may be four terms which behaves like $O(\varepsilon^{-\frac12})$.
\begin{align*}
&-\partial_yh^a+\rho^\varepsilon a^p\partial_yu^a-\rho^\varepsilon a^p\partial_ya^ph^\varepsilon-b^p[\rho^\varepsilon(a^p)^2-1]h^\varepsilon\\
=&-(\partial_yh^a-b^ph^a)+\rho^\varepsilon a^p[\partial_yu^a-(\partial_ya^p+a^pb^p)h^a]\\
=&-(\partial_yh^a-b^ph^a)+\rho^\varepsilon a^p[\partial_y(u^a-a^ph^a)+a^p(\partial_yh^a-b^ph^a)],
\end{align*}
which is of order $O(1)$ due to \eqref{C.3} and \eqref{C.4}. Combining the estimates above together, we deduce that
\begin{align}\label{C.7}
{\bf{M}}=O(1).
\tag{C.7}
\end{align}
Again by using H\"ardy's trick, we achieve that
\begin{align}\label{C.8}
\|{\bf{C}}^\varepsilon\|_{L^2}\le C\|{\bf{U}}\|_{L^2}.
\tag{C.8}
\end{align}
By the same procedure, we also get
\begin{align}\label{C.9}
\|\partial_\tau{\bf{C}}^\varepsilon\|_{L^2}\le C\|\partial_\tau{\bf{U}}\|_{L^2}+(1+Q(t))\|{\bf{U}}\|_{L^2},\quad \|y\partial_y{\bf{C}}^\varepsilon\|_{L^2}\le C\|y\partial_y{\bf{U}}\|_{L^2}+(1+Q(t))\|{\bf{U}}\|_{L^2}.
\tag{C.9}
\end{align}
Moreover, the similar estimates hold for ${\bf{C}}^0$.
\begin{align}\label{C.10}
\|{\bf{C}}^0\|_{L^2}\le C\|{\bf{U}}\|_{L^2},\quad \|\mathcal{Z}{\bf{C}}^0\|_{L^2}\le C\|\mathcal{Z}{\bf{U}}\|_{L^2}.
\tag{C.10}
\end{align}
The proof of Proposition \ref{vector} is done.
\end{proof}

\bigskip

\end{document}